\documentclass[preprint]{elsarticle}
\usepackage{amssymb}
\usepackage{amsmath}
\usepackage{amssymb,amsmath,mathrsfs,bm,xcolor}
\usepackage{bbm}
\usepackage{cases}
\usepackage{array}
\usepackage{wasysym,latexsym}
\usepackage{graphicx,color,algorithm,algpseudocode}
\usepackage{booktabs,makecell,multirow}
\usepackage{epstopdf,epsfig}
\usepackage{diagbox}
\usepackage{float}
\usepackage{mathtools}
\usepackage{setspace}
\usepackage{amsthm}

\newtheorem{thm}{Theorem}
\newtheorem{lemma}{Lemma}
\newtheorem{remark}{Remark}

\newtheorem{assumption}{Assumption}
\usepackage{setspace}
\usepackage{graphicx} 
\usepackage{geometry}
\usepackage{multirow}
\usepackage{booktabs}
\usepackage{array}
\usepackage{xurl}
\usepackage{ulem}

\usepackage{algorithm,algpseudocode}
\usepackage{algpseudocode} 

\usepackage{caption} 
\usepackage{subcaption} 
\usepackage{adjustbox} 
\usepackage[absolute,overlay]{textpos} 
 \usepackage{geometry} 

\usepackage[colorlinks,linkcolor=blue,anchorcolor=blue,linktocpage=true,urlcolor=blue,citecolor=blue]{hyperref}

\usepackage{enumitem} 
\usepackage{hyperref} 

\newcommand{\msc}[1]{\textbf{MSC Classification:}#1}
\journal{ }
\graphicspath{{figures/}}
\begin{document}
	\begin{frontmatter}
    
    \title{A New Primal‑Dual Algorithm with Two Convex Combinations and Linesearch for General Convex‑Concave Saddle‑Point Problems}
    
    \author[1]{Zexian Liu}
    \ead{liuzexian2008@163.com}
    \author[1]{Shuning Liu}
	\ead{snliu00@163.com}
    \author[1]{Jialong Li \corref{cor1}}
	\ead{ ailijia0417@163.com}

    \address[1]{School of Mathematics and Statistics, Guizhou University, Guiyang 550025, China}
    \cortext[cor1]{Corresponding author}

    \begin{abstract}
        Convex-concave saddle-point problems  are ubiquitous across diverse domains, including machine learning, image processing, economics, and equilibrium problems. Primal-dual algorithms provide a highly effective and  powerful  framework for convex-concave saddle-point problems. Convex combination has become a crucial acceleration technique for primal-dual algorithms, and the integration of this technique has recently made these algorithms a highly active research topic. The choice of the convex combination parameter often has a significant impact on both the theoretical analysis and the numerical performance of the corresponding algorithms. However, the requirements  on this parameter imposed by theory are sometimes inconsistent with those suggested by numerical experiments. For instance, theoretical analysis often requires the parameter to be small, while numerical experiments tend sometimes to favor larger values. To address this inconsistency and further advance primal-dual algorithms with convex combination, we develop a novel strategy based on two convex combinations, integrate it into a primal-dual framework, and propose a new primal-dual algorithm with linesearch, termed NPDAL-n, for general convex-concave saddle-point problems. The proposed two convex combinations in NPDAL-n ensure that the permissible range of the convex combination parameters is mainly determined by theoretical considerations, with little regard for numerical performance. Through rigorous Lyapunov energy descent analysis, we establish the global convergence and a sublinear ergodic convergence rate of $\mathcal{O}(1/N)$ for NPDAL-n under standard assumptions. When the primal function is strongly convex, we develop an accelerated version of NPDAL-n that achieves an optimal $\mathcal{O}(1/N^2)$ rate. Furthermore, by analyzing the linesearch condition via the problem structure, we present an adaptive variant of NPDAL-n—a linesearch-free proximal gradient method for composite convex optimization problems, which can be regarded as a special case of convex-concave saddle-point problems. Comprehensive numerical experiments on quadratically constrained quadratic programming  and sparse logistic regression problems demonstrate that the proposed algorithms outperform some state-of-the-art algorithms.
    \end{abstract}		
	\begin{keyword}
        primal-dual algorithm \sep linesearch \sep convex combination \sep global convergence \sep convergence rate
		\end{keyword}
    \end{frontmatter}
    \msc{: 49M29, 65K10, 65Y20, 90C25}
    
\section{Introduction}\label{section1}
    In this paper, we consider the following general convex-concave saddle-point problem 
    \begin{equation}\label{p}
        \min_{x\in \mathbb{R}^{q}} \max_{y\in \mathbb{R}^{p}}\; L(x,y) = g(x) + \Phi(x, y) - f^*(y).
    \end{equation}
   where $g: \mathbb{R}^{q} \to (-\infty,+\infty]$ and $f: \mathbb{R}^{p} \to (-\infty,+\infty]$ are proper, closed, and convex extended real-valued functions, and $\Phi: \mathrm{dom}(g) \times \mathrm{dom}(f^{*}) \to \mathbb{R}$ is differentiable and is convex in  $x$ for each fixed $y$ and concave in $y$ for each fixed $x$. Here, $f^*$ denotes the Legendre-Fenchel conjugate of $f$, i.e., $f^{*}(y) = \sup_{u \in \mathbb{R}^{p}} \langle y, u \rangle - f(u), \ \forall\, y \in \mathbb{R}^{p}$.  
    
    The generic formulation in \eqref{p} is highly versatile, capturing a wide spectrum of fundamental models explored in the optimization literature. For example, replacing $\Phi(x,y)$ with a standard bilinear form $\langle Kx, y \rangle$ (where $K \in \mathbb{R}^{p \times q}$) directly yields the classical   bilinear saddle-point problem. This specific formulation serves as a cornerstone model extensively utilized in signal and image processing, machine learning, statistics, economics, and mechanics \cite{ref1,ref2,ref3,ref4,ref5,ref11}.

    By specializing the functions  $g$ and $f^*$, problem \eqref{p} recovers several other prominent paradigms. For instance, setting them as indicator functions over unit simplices reduces the framework to a zero-sum matrix game. Moreover, when $g$ and $f^*$   are null functions, the problem simplifies to an unconstrained saddle‑point configuration \cite{ref7, ref8}, which has gained increasing relevance in modern machine learning applications \cite{ref6, ref9}. More broadly, problem \eqref{p} naturally encompasses convex optimization scenarios subject to nonlinear conic constraints, thereby seamlessly unifying linear, quadratic, quadratically constrained quadratic, second‑order cone, and semidefinite programming under a single umbrella \cite{ref10}. Furthermore, this framework elegantly accommodates composite convex optimization problems featuring a finite-sum structure:
    \begin{equation}\label{finite_sum}
        \min_{x\in \mathbb{R}^{q}} \; g(x) + \frac{1}{p}\sum_{i=1}^{p} h_i(x).
    \end{equation}
    Such finite-sum models are ubiquitous in empirical risk minimization and large-scale data analysis. By defining the mapping $H(x) := (h_1(x), \dots, h_p(x))^\top$ and introducing a dual variable $y \in \mathbb{R}^p$, problem \eqref{finite_sum} can be   reformulated as 
    \begin{equation}\label{finite_sum_saddle}
        \min_{x\in \mathbb{R}^{q}} \max_{y\in \mathbb{R}^{p}} \; g(x) + \langle H(x), y \rangle - \iota_{\{\mathbf{1}/p\}}(y),
    \end{equation}
    where $\mathbf{1} \in \mathbb{R}^p$ denotes the vector of all ones, and $\iota_{\{\mathbf{1}/p\}}$ is the indicator function of the singleton set $\{\mathbf{1}/p\}$. This explicit representation demonstrates that the finite-sum problem naturally fits into the saddle-point problem \eqref{p} with $\Phi(x,y) = \langle H(x), y \rangle$. 
    
    Given the pervasive applications of problem \eqref{p}, a myriad of numerical strategies have been developed, prominently including the extragradient \cite{ref12, ref13} and optimistic gradient descent-ascent schemes \cite{ref7, ref8}. From a historical perspective, primal-dual algorithms (PDAs)---tracing back to the foundational work of Arrow and Hurwicz \cite{ref18}---have evolved significantly over the decades to tackle such saddle-point problems \cite{ref3, ref15, ref10, ref16, ref17}. 

    \subsection{Notation}\label{notation}
    Throughout this paper, we adopt standard notations from convex analysis. For a finite-dimensional Euclidean space $\mathbb{R}^m$, the standard inner product and its induced Euclidean norm are denoted by $\langle \cdot, \cdot \rangle$ and $\|\cdot\|=\sqrt{\langle\cdot,\cdot\rangle}$, respectively. Let $\phi: \mathbb{R}^m \rightarrow (-\infty, +\infty]$ be a proper, lower semicontinuous, and convex function. The effective domain of $\phi$ is defined as $\text{dom}(\phi) := \{x \in \mathbb{R}^m \mid \phi(x) < +\infty\}$. 
    
    If $\phi$ is differentiable, $\nabla \phi(x)$ represents its gradient; otherwise, $\partial \phi(x) := \{\xi \in \mathbb{R}^m \mid \phi(y) \ge \phi(x) + \langle \xi, y - x \rangle, \forall y \in \text{dom}(\phi)\}$ denotes its subdifferential at $x$. We say that $\nabla \phi$ is $L_\phi$-smooth if it is Lipschitz continuous on the interior of $\text{dom}(\phi)$ with a constant $L_\phi \ge 0$, satisfying $\|\nabla \phi(x) - \nabla \phi(y)\| \le L_\phi \|x - y\|$. 
    
    For any scalar $\lambda > 0$, the proximal operator associated with the scaled function $\lambda \phi$ is uniquely defined as 
    $$
    \text{Prox}_{\lambda \phi}(x) := \operatorname*{argmin}_{y \in \mathbb{R}^m} \left\{ \phi(y) + \frac{1}{2\lambda} \|y - x\|^2 \right\}.
    $$
    Finally, for column vectors $u$ and $v$, their concatenation is compactly denoted by $(u; v) := (u^\top, v^\top)^\top$. When the precise block structure is not emphasized, we simply write $(u, v)$.
    
    \subsection{Related Work}
    Numerical schemes for problem \eqref{p} have a long history, dating back to the foundational Arrow–Hurwicz method \cite{ref18}. 
    Originally designed for bilinear couplings (i.e., $\Phi(x,y) = \langle Kx, y \rangle$), this method tackles the  saddle-point problem   via alternating proximal gradient steps. 
    Specifically, it computes $x_{n} = \mathrm{Prox}_{\tau_{n}g}(x_{n-1} - \tau_{n}\nabla_{x}\Phi(x_{n-1}, y_{n-1}))$ followed by $y_{n} = \mathrm{Prox}_{\sigma_{n}f^{*}}(y_{n-1} + \sigma_{n}\nabla_{y}\Phi(x_{n}, y_{n-1}))$. 
    While intuitively appealing, the Arrow–Hurwicz scheme is known to not necessarily converge for general bilinear problems \cite{ref16, ref19}. 
    Its convergence is typically guaranteed only under stringent conditions, such as strict strong convexity-concavity \cite{ref8, ref21}, restricted bounded domains \cite{ref3, ref9}, or sufficiently small step sizes \cite{ref20}.
    
    To overcome these fundamental limitations, modern primal-dual algorithms (PDAs) have heavily incorporated extrapolation techniques, notably those introduced  by Chambolle and Pock \cite{ref3, ref22}. 
    While initially developed for bilinear settings, these PDAs have now been extended to more general cases.
    For instance, Zhu et al. \cite{ref23} introduced a class of primal-dual algorithms for problem \eqref{p} that leverage a potential function, Nesterov's acceleration, and adaptive parameter updates, ultimately achieving optimal rates.  
    More recently, Hamedani and Aybat \cite{ref10} presented a new primal-dual algorithm   by introducing a specific momentum term on the dual gradient, where the iterative scheme is given by    \begin{equation}\label{PDA}
        \left\{
        \begin{aligned}
            x_n &= \mathrm{Prox}_{\tau_n g}\bigl(x_{n-1} - \tau_n\nabla_x\Phi(x_{n-1},y_{n-1})\bigr), \\
            z_n &= (1+\delta)\nabla_y\Phi(x_n,y_{n-1}) - \delta\nabla_y\Phi(x_{n-1},y_{n-2}), \\
            y_n &= \mathrm{Prox}_{\sigma_n f^*}\bigl(y_{n-1} + \sigma_n z_n\bigr),
        \end{aligned}
        \right.
    \end{equation}
    where $\delta \in (0,1]$. 
    They also incorporated an adaptive linesearch to dynamically tune $\tau_n$ and $\sigma_n$.
    
    Another highly influential line of research stems from the use of convex combination techniques.  
    Malitsky \cite{ref14} introduced the golden ratio algorithm (GRA), which utilizes a Jacobian-type update augmented by a fully adaptive step size to solve general mixed variational inequalities (MVIs). 
    Since the optimality conditions of \eqref{p} can be cast as an MVI, GRA is theoretically applicable. 
    However, empirical evidence suggests that applying GRA's Jacobian-style updates directly to \eqref{p} yields suboptimal computational efficiency compared to Gauss-Seidel-based PDAs like \eqref{PDA}.   
    To address this performance gap, subsequent works proposed the golden ratio primal-dual algorithm (GRPDA) \cite{ref15, ref24}, integrating the convex combination strategy with alternating updates in bilinear settings. 
    This architecture was recently advanced by the PDAc-L algorithm \cite{ref25}, which efficiently extended the methodology to general convex-concave saddle-point problems via the following iterative scheme:
    \begin{equation}\label{eq:PDAc}
        \left\{
        \begin{aligned}
            z_n &= \frac{\psi-1}{\psi}x_{n-1} + \frac{1}{\psi}z_{n-1}, \\
            x_n &= \mathrm{Prox}_{\tau_n g}\bigl(z_n - \tau_n \nabla_x \Phi(x_{n-1}, y_{n-1})\bigr), \\
            y_n &= \mathrm{Prox}_{\sigma_n f^*}\bigl(y_{n-1} + \sigma_n \nabla_y \Phi(x_n, y_{n-1})\bigr).
        \end{aligned}
        \right.
    \end{equation}
    Notably, PDAc-L expanded the admissible convex combination parameter range $\psi$ to $(1, 1+\sqrt{3})$ and demonstrated linear convergence for strongly convex problems with the type of coupling $\langle H(x), y \rangle$. 
    By incorporating a special convex combination scheme, Chen, Lan and Ouyang \cite{ref27} developed an accelerated primal-dual   method for a class of deterministic and stochastic saddle‑point problems, where the iterative scheme is given by  
    \[\left\{ {\begin{array}{*{20}{l}}
    {\bar x_n^{md} = (1 - \beta _n^{ - 1})x_n^{ag} + \beta _n^{ - 1}{x_n},}\\
    {{y_{n + 1}} = {{{\mathop{\rm argmin}\nolimits} }_{y \in Y}}\langle  - K{{\bar x}_n},y\rangle  + {f^*}(y) + \frac{1}{{{\tau _n}}}y - {y_n}{^2},}\\
    {{x_{n + 1}} = {{{\mathop{\rm argmin}\nolimits} }_{x \in X}}\langle \nabla g(\bar x_n^{md}),x\rangle  + \langle x,{K^ \top }{y_{n + 1}}\rangle  + \frac{1}{{{\eta _n}}}x - {x_n}{^2},}\\
    {x_{n + 1}^{ag} = (1 - \beta _n^{ - 1})x_n^{ag} + \beta _n^{ - 1}{x_{n + 1}},}\\
    {y_{n + 1}^{ag} = (1 - \beta _n^{ - 1})y_n^{ag} + \beta _n^{ - 1}{y_{n + 1}},}\\
    {{{\bar x}_{n + 1}} = {\theta _{n + 1}}({x_{n + 1}} - {x_n}) + {x_{n + 1}}.}
    \end{array}} \right.\]
    Here, $\{\beta_n\}$ is a sequence satisfying $\beta_n>1$ for all $n$.   Notably, their method achieves an optimal convergence rate   while avoiding any smoothing of the objective function, and can handle  scenarios where the feasible region is unbounded  provided that a saddle point exists. 

    
    \subsection{Motivation and Contributions}
    Primal-dual algorithms with convex combinations have exhibited remarkable numerical efficiency, which   has made them  constitute a mainstream class of methods for convex-concave saddle-point problems.  The choice of the convex combination parameter (for example, $\psi$ in \eqref{eq:PDAc}) often has a significant impact on both the theoretical analysis and the numerical performance of the corresponding algorithms.  However, the requirements on this parameter imposed by theory are sometimes inconsistent with those suggested by numerical experiments, which may cause a gap between theoretical analysis and practical performance.  

    Recently, Liu and Liu   \cite{ref11} exploited a strategy of  two convex combinations to  present a  primal-dual algorithm (NPDAL), which has demonstrated outstanding   numerical performance for convex-concave saddle-point with bilinear coupling term. Although the strategy of two convex combinations  can overcome the above limitation on the convex combination parameter, it is currently limited to bilinear coupling structures; whether it works for the general cases remains  unknown.

    To address the limitations on convex combination parameter and   further advance primal-dual algorithms with convex combinations, in this paper, we exploit a new  strategy of two convex combinations  based on that in \cite{ref11} to relax the limitation on the convex combination parameter, and present a new primal-dual algorithm (NPDAL-n) with the two convex combinations and linesearch for general convex-concave saddle-point problem.
    
    The primary contributions of this work are fourfold:
    \begin{enumerate}
        \item To handle general convex-concave saddle-point problems, we extend the NPDAL framework \cite{ref11}. Specifically, we first construct two auxiliary points via convex combinations of previous iterates:
        \begin{equation} \label{znn}
           z_n = \frac{\psi-1}{\psi}x_{n-1} + \frac{1}{\psi}z_{n-1}, \quad \bar{z}_n = (1-b)y_{n-1} + b \bar{z}_{n-1},    
        \end{equation}
        and then form another convex combination of these auxiliary points to obtain the intermediate   points:
        \begin{equation} \label{xmdymd}
        x_{n}^{md} = a x_{n-1} + (1-a) z_n, \quad y_{n}^{md} = b y_{n-1} + (1-b) \bar{z}_{n}.
       \end{equation}
        These intermediate points are subsequently employed in the proximal updates for the primal and dual variables: 
        $$
        \begin{aligned}
            x_n &= \mathrm{Prox}_{\tau g}\bigl(x_n^{md} - \tau \nabla_x\Phi(x_{n-1},y_{n-1})\bigr), \\
            y_n &= \mathrm{Prox}_{\sigma f^*}\bigl(y_n^{md} + \sigma \nabla_y\Phi(x_n,y_{n-1})\bigr).
        \end{aligned}
        $$
        This construction effectively relaxes the restriction on the convex combination parameter.  Under local Lipschitz continuity assumptions on $\nabla_x \Phi$ and $\nabla_y \Phi$, we rigorously prove global pointwise convergence and establish an $\mathcal{O}(1/N)$ ergodic sublinear convergence rate of the proposed  NPDAL-n framework.
    
        \item When the primal function 
        $g$  is strongly convex and the coupling term takes the form  $\Phi(x,y) = h(x) + \langle H(x),y \rangle$, we develop an accelerated variant of NPDAL-n and establish a    $\mathcal{O}(1/N^2)$ convergence rate in terms of the primal-dual gap function.  

        \item In a special scenario where the composite convex optimization problem \eqref{finite_sum_saddle} is transformed into a convex‑concave saddle‑point problem, we demonstrate that NPDAL‑n can simplify  to a linesearch‑free proximal gradient method, thereby entirely avoiding linesearch procedures and global Lipschitz constant knowledge.  
    
        \item   Numerical experiments on   quadratically constrained quadratic programming (QCQP) and sparse logistic regression (SLR) tasks  demonstrate that the proposed algorithms not only significantly reduce runtime and iteration counts, but also consistently outperform several existing state‑of‑the‑art primal‑dual methods.
    \end{enumerate}

    \subsection{Organization}
    The remainder of the paper is organized as follows. Building upon the notation introduced in Section \ref{notation}, Section \ref{section2} formally outlines the fundamental assumptions and provides the necessary analytical preliminaries. In Section \ref{section3}, we formally present a new primal-dual algorithm with two convex combinations and linesearch, followed by detailed theoretical proofs of its global convergence and sublinear convergence rate. Section \ref{section4} shifts focus to the strongly convex regime for nonlinear compositional optimization, presenting the accelerated algorithm alongside its $\mathcal{O}(1/N^2)$ rate guarantee. In Section \ref{section5}, we additionally demonstrate how this framework elegantly simplifies to a fully adaptive proximal gradient method when applied to associated the composite convex optimization formulation \eqref{finite_sum}. Finally, Section \ref{section6} presents comprehensive numerical evaluations on QCQP and SLR problems, and Section \ref{section7} offers concluding remarks.
        
\section{Assumptions and Preliminaries}\label{section2}
    This section presents fundamental mathematical tools and assumptions necessary for our convergence analysis. 
    Let $a, b, c \in \mathbb{R}^p$ and $\lambda \in \mathbb{R}$.    The following   algebraic identities are frequently utilized in the subsequent theoretical analysis:
    \begin{align}
        2\langle a-b, a-c \rangle &= \|a-b\|^2 + \|a-c\|^2 - \|b-c\|^2, \label{identity_inner} \\
        \|\lambda a + (1-\lambda)b\|^2 &= \lambda\|a\|^2 + (1-\lambda)\|b\|^2 - \lambda(1-\lambda)\|a-b\|^2. \label{identity_convex}
    \end{align}
    
    A point pair $(x^*, y^*) \in \mathrm{dom}(g) \times \mathrm{dom}(f^*)$ is designated as a saddle-point of the minimax objective $\mathcal{L}$ if it satisfies 
    $$ 
    \mathcal{L}(x^*, y) \le \mathcal{L}(x^*, y^*) \le \mathcal{L}(x, y^*), \quad \forall (x,y) \in \mathrm{dom}(g) \times \mathrm{dom}(f^*). 
    $$
   The set of all such saddle-points is denoted by $\Omega$. 
    According to the first-order optimality conditions, this set can be explicitly formulated as:
    \begin{equation}
        \Omega = \left\{ (x^*, y^*) \in \mathrm{dom}(g) \times \mathrm{dom}(f^*) \mid -\nabla_x \Phi(x^*, y^*) \in \partial g(x^*), \; \nabla_y \Phi(x^*, y^*) \in \partial f^*(y^*) \right\}.
    \end{equation}
    
    To quantify the convergence metric, we define the primal-dual gap function evaluated at a fixed $(x^*, y^*) \in \Omega$ as follows:
    \begin{equation} \label{J}
        J(x,y) := \mathcal{L}(x, y^*) - \mathcal{L}(x^*, y).
    \end{equation}
    Note that $J(x,y)$ is inherently non-negative and jointly convex with respect to $(x,y)$.
    
    Throughout this paper, our theoretical framework is grounded on the following blanket assumptions.
    
    \begin{assumption}\label{assump:2.1}
        The solution set is non-empty, i.e., $\Omega \neq \emptyset$. 
        Additionally, the domain condition $\mathrm{dom}(g) \times \mathrm{dom}(f^*) \subseteq \mathrm{dom}(\Phi)$ holds, and the optimal value $\mathcal{L}(x^*, y^*)$ is finite.
    \end{assumption}
    
    \begin{assumption}\label{assump:2.2}
        The continuous coupling mapping $\Phi: \mathrm{dom}(g) \times \mathrm{dom}(f^*) \to \mathbb{R}$ satisfies two structural conditions:
        \begin{enumerate}
            \item[(i)] \textbf{(convexity and concavity)} The function $\Phi$ is convex and differentiable with respect to its first argument over $\mathrm{dom}(g)$ for any fixed $y \in \mathrm{dom}(f^*)$. Conversely, it is concave and differentiable with respect to its second argument over $\mathrm{dom}(f^*)$ for any fixed $x \in \mathrm{dom}(g)$.
            
            \item[(ii)] \textbf{(Local Lipschitz Gradients)} Given any bounded regions $\mathcal{X} \subset \mathbb{R}^p$ and $\mathcal{Y} \subset \mathbb{R}^q$, there exist $L_{xx} \ge 0$, $L_{yy} \ge 0$, and $L_{xy} > 0$ such that:
            \begin{align*}
                \|\nabla_y \Phi(x, y) - \nabla_y \Phi(x, \tilde{y})\| &\le L_{yy} \|y - \tilde{y}\|, \\
                \|\nabla_x \Phi(x, y) - \nabla_x \Phi(\tilde{x}, \tilde{y})\| &\le L_{xx} \|x - \tilde{x}\| + L_{xy} \|y - \tilde{y}\|,
            \end{align*}
            hold for all admissible points $x, \tilde{x} \in \mathcal{X} \cap \mathrm{dom}(g)$ and $y, \tilde{y} \in \mathcal{Y} \cap \mathrm{dom}(f^*)$.
        \end{enumerate}
    \end{assumption}
    
    \begin{remark}\label{rem:2.1}
        The local Lipschitz constants ($L_{xx}, L_{yy}, L_{xy}$) introduced above are utilized solely for establishing theoretical convergence. 
        Our algorithmic design remains entirely parameter-free with respect to these constants. 
        It suffices that these bounds exist on the bounded trajectories $\mathcal{X} \times \mathcal{Y}$ dynamically generated by the iterations.
    \end{remark}
    
    \begin{assumption}\label{assump:2.3}
        The proximal operators associated with the constituent functions $g$ and $f^*$ are computationally tractable.
    \end{assumption}
    
    We conclude this section by cataloging three fundamental auxiliary lemmas utilized in our subsequent proofs. 
    \begin{lemma}[\cite{ref25}]\label{fact2.2}
        Let $h: \mathbb{R}^m \to (-\infty, +\infty]$ be an extended real-valued, closed, proper, and $\gamma$-strongly convex function with modulus $\gamma \ge 0$. 
        For any given $x \in \mathbb{R}^m$ and scalar $\tau > 0$, $z = \mathrm{Prox}_{\tau h}(x)$ if and only if 
        $$ 
        h(y) \ge h(z) + \frac{1}{\tau} \langle x - z, y - z \rangle + \frac{\gamma}{2} \|y - z\|^2, \quad \forall y \in \mathbb{R}^m. 
        $$
    \end{lemma}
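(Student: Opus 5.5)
The plan is to prove both directions from the optimality condition of the strongly convex subproblem that defines the proximal map. By definition, $z = \mathrm{Prox}_{\tau h}(x)$ is the unique minimizer of
\[
\phi(u) := h(u) + \frac{1}{2\tau}\|u-x\|^2 .
\]
Since $h$ is closed, proper and $\gamma$-strongly convex, $\phi$ is closed, proper and $(\gamma + 1/\tau)$-strongly convex, so the minimizer exists and is unique. Fermat's rule gives $0 \in \partial \phi(z)$. Because the quadratic term is finite and differentiable everywhere, the subdifferential sum rule applies without a qualification condition, and $0\in\partial\phi(z)$ is equivalent to
\[
\frac{1}{\tau}(x - z) \in \partial h(z).
\]

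For the forward direction, I would use the standard characterization of a $\gamma$-strongly convex function: $\xi \in \partial h(z)$ implies
\[
h(y) \ge h(z) + \langle \xi, y - z\rangle + \frac{\gamma}{2}\|y-z\|^2 \quad \text{for all } y .
\]
This follows by applying the ordinary subgradient inequality to $h - \frac{\gamma}{2}\|\cdot\|^2$, which is convex, with subgradient $\xi - \gamma z$ at $z$, and then expanding the squares with \eqref{identity_inner}. Substituting $\xi = (x-z)/\tau$ yields the claimed inequality. Points $y \notin \mathrm{dom}(h)$ need no argument, since the left side is $+\infty$.

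For the converse, suppose the inequality holds for all $y$. Dropping the nonnegative term $\frac{\gamma}{2}\|y-z\|^2$ leaves exactly the subgradient inequality for $\xi=(x-z)/\tau$. Hence $(x-z)/\tau \in \partial h(z)$, so $0\in\partial\phi(z)$, and $z$ is the unique minimizer, i.e. $z=\mathrm{Prox}_{\tau h}(x)$. Alternatively, one can add $\frac{1}{2\tau}\|y-x\|^2$ to both sides and use \eqref{identity_inner} to get $\phi(y)\ge\phi(z)$ directly. The one point that needs care is that $z\in\mathrm{dom}(h)$. This holds because $h$ is proper: taking some $y\in\mathrm{dom}(h)$ in the inequality forces $h(z)<\infty$.

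The main technical point is the strongly convex subgradient inequality. I expect no real obstacle there, since it reduces to convexity of $h-\frac{\gamma}{2}\|\cdot\|^2$ together with the identity $\partial(h - \frac{\gamma}{2}\|\cdot\|^2)(z) = \partial h(z) - \gamma z$. That identity is valid because the quadratic is smooth and finite everywhere.
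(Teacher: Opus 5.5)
Your proof is correct. The paper gives no proof of this lemma; it imports it from \cite{ref25} as a standard fact (the ``second prox theorem'' with a strong-convexity term added), so there is no in-paper argument to compare yours with.

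Your route is the textbook one:
\begin{enumerate}
\item Fermat's rule for $\phi(u)=h(u)+\frac{1}{2\tau}\|u-x\|^2$.
\item The sum rule, whose qualification condition holds automatically because the quadratic is finite and continuous everywhere, giving $\frac{1}{\tau}(x-z)\in\partial h(z)$.
\item The strongly convex subgradient inequality.
\end{enumerate}
You also handle the two points that are easy to overlook: that $z\in\mathrm{dom}(h)$ in the converse, which properness forces, and that points $y\notin\mathrm{dom}(h)$ are trivial.

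Three small remarks.

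\textbf{The step you flag as the main technical point.} Under the definition of strong convexity the paper uses in Section~\ref{section4} (the subgradient inequality with the term $\frac{\gamma}{2}\|x-\tilde{x}\|^2$), this step is the definition itself. Your derivation via convexity of $h-\frac{\gamma}{2}\|\cdot\|^2$ is what the other common definition requires, and it is sound.

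\textbf{An elementary forward direction.} You can avoid subdifferential calculus in the forward direction, just as your alternative does for the converse. Compare $\phi(z)\le\phi(z+t(y-z))$ for $t\in(0,1)$, bound $h(z+t(y-z))$ by the strong-convexity inequality along the segment, divide by $t$ and let $t\downarrow 0$. This gives the claimed inequality directly; it only needs $h(z)<\infty$, which holds because $z$ minimizes $\phi$ and $h$ is proper.

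\textbf{The role of $\gamma$.} The converse never uses $\gamma$, since you drop the term $\frac{\gamma}{2}\|y-z\|^2$ at the outset. The strong-convexity modulus matters only for the forward implication.
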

    
    \begin{lemma}[\cite{ref25}]\label{fact2.3}
        Consider two non-negative real sequences $\{u_n\}$ and $\{v_n\}$. 
        If there exists a decay factor $\varepsilon \in (0,1)$ such that $u_{n+1} \le \varepsilon u_n + v_n$ for all $n \ge 1$, and $\sum_{n=1}^\infty v_n < \infty$, then it follows that $\sum_{n=1}^\infty u_n < \infty$.
    \end{lemma}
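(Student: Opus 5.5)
Lemma~\ref{fact2.3} can be proved directly by summing the recursion. Fix $N \ge 2$ and sum the inequality $u_{n+1} \le \varepsilon u_n + v_n$ over $n = 1, \dots, N-1$:
\[
\sum_{n=2}^{N} u_n \le \varepsilon \sum_{n=1}^{N-1} u_n + \sum_{n=1}^{N-1} v_n .
\]
Put $S_N := \sum_{n=1}^{N} u_n$ and $V := \sum_{n=1}^{\infty} v_n < \infty$. The left-hand side is $S_N - u_1$. The first term on the right is $\varepsilon S_{N-1} \le \varepsilon S_N$, because the $u_n$ are non-negative. Since the $v_n$ are also non-negative, the last sum is at most $V$. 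Together these give $S_N - u_1 \le \varepsilon S_N + V$.

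Every term here is a finite real number, so we may rearrange to obtain
\[
(1-\varepsilon) S_N \le u_1 + V .
\]
Because $\varepsilon \in (0,1)$, the factor $1-\varepsilon$ is positive, and therefore $S_N \le (u_1 + V)/(1-\varepsilon)$. This bound does not depend on $N$. The partial sums $S_N$ are nondecreasing (the $u_n$ are non-negative) and bounded above, so they converge. Hence $\sum_{n=1}^{\infty} u_n < \infty$.

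The only point requiring care is the use of non-negativity of $u_n$ to replace $\varepsilon S_{N-1}$ by $\varepsilon S_N$, together with working with finite partial sums. This lets us move $\varepsilon S_N$ to the left-hand side without ever manipulating a possibly infinite quantity. An alternative route unrolls the recursion to $u_{n+1} \le \varepsilon^{n} u_1 + \sum_{k=1}^{n} \varepsilon^{n-k} v_k$ and sums it via a Cauchy-product argument, but the telescoping argument above is shorter.
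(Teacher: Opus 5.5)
Your proof is correct and complete. Summing the recursion over $n=1,\dots,N-1$, bounding $\varepsilon S_{N-1}\le \varepsilon S_N$ by non-negativity, and rearranging only finite quantities gives the $N$-independent bound $S_N\le (u_1+V)/(1-\varepsilon)$, so the nondecreasing partial sums converge. The paper gives no proof of this lemma; it only quotes it from \cite{ref25}, so there is no argument to compare against. Your self-contained telescoping argument is sufficient for where the lemma is used in the global convergence theorem, with $\varepsilon = 1/\psi$.
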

    
    \begin{lemma}[\cite{ref25}]\label{fact2.4}
        For any real numbers $P, Q \in \mathbb{R}$ and non-negative weights $u, v \in \mathbb{R}$ satisfying $u+v > 0$, there holds:
        $$ 
        \frac{uv}{u+v}(P+Q)^2 \le uP^2 + vQ^2. 
        $$
    \end{lemma}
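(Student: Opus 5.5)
This is a one-line weighted Cauchy--Schwarz inequality, so the plan is a direct algebraic verification with a short case split. First I dispose of the degenerate case. If $uv=0$, say $v=0$, then $u>0$ and the left-hand side is zero. The right-hand side $uP^2+vQ^2$ is non-negative, so the inequality is immediate; the case $u=0$ is symmetric. Hence I may assume $u,v>0$.

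In the main case I multiply through by $u+v>0$, which reduces the claim to
\[
uv(P+Q)^2 \le (u+v)(uP^2+vQ^2).
\]
Expanding both sides, the left-hand side is $uvP^2+2uvPQ+uvQ^2$. The right-hand side is $u^2P^2+uvQ^2+uvP^2+v^2Q^2$. Cancelling the common terms $uvP^2+uvQ^2$ leaves
\[
0\le u^2P^2-2uvPQ+v^2Q^2=(uP-vQ)^2,
\]
which always holds. This closes the main case.

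Alternatively, one can view the statement as Cauchy--Schwarz applied to the vectors $(\sqrt{u}P,\sqrt{v}Q)$ and $(1/\sqrt{u},1/\sqrt{v})$. This gives $(P+Q)^2\le (uP^2+vQ^2)(1/u+1/v)$, and since $1/u+1/v=(u+v)/(uv)$, rearranging yields the claim. There is no genuine obstacle. The only point needing care is the degenerate case where one weight vanishes, where the factor $1/u+1/v$ is undefined; that is why I handle it separately at the start.
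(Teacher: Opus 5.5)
Your proof is correct: the expansion to $(uP-vQ)^2\ge 0$ is valid, and so is the Cauchy--Schwarz variant. The paper does not prove this lemma; it imports it from \cite{ref25}, so there is no argument of its own to compare with.

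Your expansion route proves more than the stated lemma. The identity
\[
(u+v)\bigl(uP^2+vQ^2\bigr)-uv(P+Q)^2=(uP-vQ)^2
\]
holds for all real $u,v$, and the only hypothesis you use is $u+v>0$ when dividing. So the degenerate case split is needed only for the Cauchy--Schwarz version, and the nonnegativity of $u,v$ can be dropped. The same computation with vectors gives $(u+v)\bigl(u\|p\|^2+v\|q\|^2\bigr)-uv\|p+q\|^2=\|up-vq\|^2$. Hence $\frac{uv}{u+v}\|p+q\|^2\le u\|p\|^2+v\|q\|^2$ whenever $u+v>0$.

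This stronger form is what the proof of Lemma~\ref{le2} actually needs. There $u=\psi\delta_n$ and $v=1+\frac{1}{\psi}-\psi\delta_n$, and $v$ is already negative for $\psi=1.9$, $\delta_n=1$. In that regime the lemma as stated does not apply. The paper's step through $\|x_{n+1}-x_n\|\le P+Q$ also points the wrong way, because the coefficient $\frac{uv}{u+v}$ is negative. Your vector identity with $p=z_{n+1}-x_n$ and $q=x_{n+1}-z_{n+1}$ still gives the inequality used there directly.
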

 
\section{The Proposed Algorithm and Convergence Analysis}\label{section3}

Based on the two convex combinations \eqref{znn}
 and \eqref{xmdymd}, we propose a new primal-dual algorithm with linesearch for    the convex-concave saddle-point problem \eqref{p}, and establish its global pointwise convergence and   sublinear ergodic rate.
    
    To streamline the presentation of the algorithm, we first define several auxiliary quantities. At each iteration $n$, we track the gradient variation and the linearization error associated with the function $\Phi$, denoted respectively by:
    \begin{align}
        \theta_n &:= \nabla_x \Phi(x_n, y_n) - \nabla_x \Phi(x_{n-1}, y_{n-1}), \label{tt1} \\
        \Phi_n^y &:= \Phi(x_n, y_{n-1}) + \langle \nabla_y \Phi(x_n, y_{n-1}),\, y_n - y_{n-1}\rangle - \Phi(x_n, y_n). \label{tt2}
    \end{align}
    
    Furthermore, we introduce the algorithmic parameters $\omega$ and $\omega_1$, which depend on the hyperparameters $\psi \in (1, 1+\sqrt{3})$, $\xi$, $\varphi$, and $a \in [0,1)$:
    \begin{equation}\label{eq:omega_defs}
        \omega(\xi,\varphi) := 2\psi - \xi - \frac{\psi^{3}\varphi}{1+\psi}, \quad \text{and} \quad \omega_1 := (1-a)\omega(\xi,\varphi) - \frac{a}{\xi_1}.
    \end{equation}
    
    To guarantee the stability of the method, these parameters must be chosen from the following feasible parameter space:
    \begin{equation}\label{eq:theta_space}
        \Theta_{\psi} = \left\{ (\xi,\xi_1,\varphi,\omega_1) \mid \xi > 0, \; 1/\varphi>\xi_1>0, \; \psi>\varphi > 1, \; \omega_1 > 0 \right\}.
    \end{equation}
    
    It is straightforward to verify that the admissible set $\Theta_{\psi}$ is non-empty. For any $\psi \in (1,1+\sqrt{3})$, $\xi$ and $\varphi$ can be flexibly selected in the region $\Theta_{\psi}$. Utilizing these predefined quantities, the complete procedural steps of our approach are detailed below.

    \begin{algorithm}[H] 
    \caption{ NPDAL-n: New PDA with two convex combinations and linesearch for problem \eqref{p} }
    \begin{algorithmic} 
        \State \textbf{Initialization:} Choose $\psi\in(1,1+\sqrt{3}),\ (\xi,\xi_1,\varphi,\omega_1) \in \Theta_\psi$, $\tau_{max}>0$, $\nu \in (0,1),\ \mu \in (0,1),\ \eta \in [0,1),\ a\in (0,1),\ b\in [0,1) $, and an integer $M\geq 1$. Choose $x_0 \in \textbf{dom}(g),y_0 \in \mathbf{dom}(f^*) $, $\beta>0$ and $\tau_0 \in (0,\tau_{max}]$. Set $z_0=x_0$, $\bar{z}_0=y_0$ and $n=1$.
        \State \textbf{Main Iteration:}
            \State Step 1. Compute 
            \begin{align}
                z_{n} &= \frac{\psi-1}{\psi} x_{n-1} +\frac{1}{\psi} z_{n-1}, \quad
                x_{n}^{md} = a x_{n-1} +(1-a)z_{n},   \label{zx1}\\
                x_{n} &= \text{Prox}_{\tau_{n-1} g}\big(x_{n}^{md}-\tau_{n-1} \nabla_x \Phi(x_{n-1},y_{n-1})\big).\label{x1}
            \end{align}    
            \State Step 2. Compute
            \begin{align}
                \bar{z}_n =  (1-b) y_{n-1}+b \bar{z}_{n-1}, \quad y_{n}^{md} =b y_{n-1} + (1-b) \bar{z}_n. \label{zy1}
            \end{align} 
        \State Step 3. \textbf{Linesearch.} Set $\tau =\min\{\varphi\tau_{n-1},\tau_{max} \}$ and perform 
            \begin{align}        
                y_{n}=\text{Prox}_{\beta\tau_n f^*}(y_{n}^{md}+\beta\tau_n \nabla_y \Phi(x_{n},y_{n-1})),\label{y1}
            \end{align}
        \State \hspace{3.2em}  $\tau_{n} :=\tau\mu^i  \;\text{for} \; i=0,1,\cdots $     until the following condition    is satisfied:
            \begin{align}
                &\ \frac{\tau_n \tau_{n-1}}{(1-a)\xi}\|\theta_n\|^2 + 2\tau_n \Phi_n^y \leq \nu r_n +(1-\nu)c_n. \label{lin} \\
                \text{Here,} \quad r_n &=\omega_1 \delta_{n-1} \|x_n - x_{n-1}\|^2 +\frac{b}{\beta}\|y_n - y_{n-1}\|^2 + \frac{1}{\beta}(1-b)(1+b)\|y_n - \bar{z}_n\|^2,\nonumber \\
                c_n&=(\eta/|I_n|)\sum_{i \in |I_n|}r_i  \ \text{and} \quad I_n=\{n-1, n-2,..., \max\{n-M,1\}\}.\label{rn}
            \end{align} 
        \State Step 4. Set $\delta_n=\tau_n/\tau_{n-1}$, $n \leftarrow n+1$ and go to Step 1.
    \end{algorithmic}  \label{Alg1}
    \end{algorithm} 

    \begin{remark}
         In the special case where  $a=0  \;\text{and} \;b=0$,  NPDAL‑n reduces to PDAc‑L \cite{ref25}.   
    \end{remark}

    \begin{remark}
    In general, a large weight on the latest iterate (for example, the coefficient of $y_{n-1}$ in $y_{n}^{md}$, or that of $x_{n-1}$ in $x_{n}^{md}$) is preferred in numerical experiments, while a small weight is sometimes required for theoretical analysis. This discrepancy may lead to a gap between theory and practice.

    For $y_{n-1}$ in $y_{n}^{md}$,  we obtain that  
    $$
    y_{n}^{md}=  b y_{n-1}+(1-b) \bar z_n = (1-b+b^2)y_{n-1}+b(1-b) \bar z_{n-1}.
    $$
    Denote $\phi_1 (b) =1-b+b^2 $. We know that the weight   assigned to the latest iterative point $y_{n-1}$ in $ y_{k}^{md}$ is $ \phi_1 (b ) =1-b+b^2 $, giving $\phi_1 (1)=1$ and $\phi_1 (0)=0$, and $\mathop {\min }\limits_{0 \le b \le 1} \phi \left( b \right) = 0.75$, which imply that the weight for $y_{n-1}$ satisfies $\phi_b \left(b \right) \ge 0.75$ for any $b \in (0,1)$.

    For $x_{n-1}$  in $x_{n}^{md}$,   we obtain that  
    \[
    x_n^{md} = a{x_{n - 1}} + (1 - a){z_n} = \left[ {a + (1 - a)\frac{{\psi  - 1}}{\psi }} \right]{x_{n - 1}} + \frac{{1 - a}}{\psi }{z_{n - 1}}.
    \]
    Denote ${\phi _2}\left( {a,\psi } \right) = a + (1 - a)\frac{{\psi  - 1}}{\psi }$. It follows  that $\frac{{\psi  - 1}}{\psi }<{\phi _2}\left( {a,\psi } \right)<1$.   In PDAc-L \cite{ref25}, the weight for the latest iterate $x_{n-1}$  in $z_{n} $ in  \eqref{eq:PDAc} is $\frac{{\psi  - 1}}{\psi } \in \left( {0,\frac{{\sqrt { 3 } }}{{\sqrt { 3 }  + 1}}} \right)$, which implies weight for the latest iterate $x_{n-1}$ is less  than 0.64. Note that for any $a \in (0,1)$, the weight assigned to the latest iterate  $x_{n-1}$  in NPDAL-n is greater than that in PDAc-L \cite{ref25}. In addition, this weight is often substantially larger, since $\psi $ is commonly set to a large value.  

    Based on the above analysis, the weights assigned to the latest iterates  (e.g., $y_{n-1}$ in $y_n^{md}$ or  $x_{n-1}$  in  $x_n^{md}$) of NPDAL-n are relatively  large for any   $a,b \in (0,1)$. Consequently, the permissible range of the convex combination parameters is mainly determined   by theoretical considerations, with little regard for numerical performance. 
    \end{remark}
    
    \subsection{Fundamental Properties of NPDAL-n}

    In this subsection, we establish several essential characteristics of the   sequence $\{(z_n, \bar{z}_n, x_n, y_n)\}$ and analyze the behavior of the adaptive step sizes $\tau_n$. These foundational results are critical for our subsequent convergence analysis.
    
    Recalling the local Lipschitz gradient condition specified in Assumption \ref{assump:2.2}, we can systematically bound the linearization error of the concave component. Specifically, for any fixed $x \in \mathrm{dom}(g)$ and any dual variables $y, \tilde{y}$ constrained within a bounded subset $Y \subset \mathrm{dom}(f^*)$, the following double inequality holds:
    \begin{equation}\label{t}
        -\frac{L_{yy}}{2}\|y-\tilde{y}\|^2 \le \Phi(x, y) - \Phi(x, \tilde{y}) - \langle \nabla_y \Phi(x, \tilde{y}), y-\tilde{y} \rangle \le 0.
    \end{equation}
    Building upon this structural bound, we now introduce three key auxiliary lemmas.
        
    \begin{lemma} \label{le1}
    Suppose Assumption \ref{assump:2.1}, \ref{assump:2.2} and \ref{assump:2.3} hold, and let $\left\{ {{x_n},{y_n}} \right\}$ be the sequence generated by NPDAL-n. Then,
       we have that
        \begin{equation} \label{t0}
			\begin{split}
				\tau_n J(x_n, y_n) \leq &\langle x_{n+1} - x_{n+1}^{md}, x^* - x_{n+1} \rangle + \delta_n \langle x_n - x_{n}^{md}, x_{n+1} - x_n \rangle + \tau_n \langle \theta_n,x_n-x_{n+1} \rangle  \\
                &+\tau_n \Phi_n^y+ \frac{1}{\beta} \langle y_n - y_{n}^{md}, y^* - y_{n} \rangle,
			\end{split}
		\end{equation}
        where $\theta_n$, $\Phi_n^y$ and $J(\cdot,\cdot)$ are defined respectively in (\ref{tt1}), (\ref{tt2}) and (\ref{J}).  
    \end{lemma}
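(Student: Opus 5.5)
The plan is to write out the two proximal characterizations from Lemma \ref{fact2.2} (with $\gamma=0$), one for each primal update and one for the dual update, and then let convexity/concavity of $\Phi$ turn the gradient terms into function values. Everything is an identity or a one-line inequality; the only care needed is bookkeeping of step sizes and signs.

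\emph{Primal part.} By \eqref{x1} at index $n+1$, $x_{n+1}=\mathrm{Prox}_{\tau_n g}(x_{n+1}^{md}-\tau_n\nabla_x\Phi(x_n,y_n))$. Lemma \ref{fact2.2} with test point $x^*$, multiplied by $\tau_n$, gives
\[
\tau_n\bigl(g(x_{n+1})-g(x^*)\bigr)\le \langle x_{n+1}-x_{n+1}^{md},x^*-x_{n+1}\rangle+\tau_n\langle \nabla_x\Phi(x_n,y_n),x^*-x_{n+1}\rangle .
\]
Next, $x_n=\mathrm{Prox}_{\tau_{n-1}g}(x_n^{md}-\tau_{n-1}\nabla_x\Phi(x_{n-1},y_{n-1}))$. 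Apply Lemma \ref{fact2.2} with test point $x_{n+1}$ and multiply by $\tau_n$, using $\delta_n=\tau_n/\tau_{n-1}$. This gives
\[
\tau_n\bigl(g(x_n)-g(x_{n+1})\bigr)\le \delta_n\langle x_n-x_n^{md},x_{n+1}-x_n\rangle+\tau_n\langle\nabla_x\Phi(x_{n-1},y_{n-1}),x_{n+1}-x_n\rangle .
\]
Adding the two inequalities cancels $g(x_{n+1})$. Split $x^*-x_{n+1}=(x^*-x_n)+(x_n-x_{n+1})$ in the first gradient term. The two gradient terms involving $x_n-x_{n+1}$ then combine, by \eqref{tt1}, into exactly $\tau_n\langle\theta_n,x_n-x_{n+1}\rangle$. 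What remains is $\tau_n\langle\nabla_x\Phi(x_n,y_n),x^*-x_n\rangle$, which convexity of $\Phi(\cdot,y_n)$ bounds by $\tau_n(\Phi(x^*,y_n)-\Phi(x_n,y_n))$.

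\emph{Dual part.} By \eqref{y1}, $y_n=\mathrm{Prox}_{\beta\tau_n f^*}(y_n^{md}+\beta\tau_n\nabla_y\Phi(x_n,y_{n-1}))$. Lemma \ref{fact2.2} with test point $y^*$, multiplied by $\tau_n$, gives
\[
\tau_n\bigl(f^*(y_n)-f^*(y^*)\bigr)\le \tfrac1\beta\langle y_n-y_n^{md},y^*-y_n\rangle-\tau_n\langle\nabla_y\Phi(x_n,y_{n-1}),y^*-y_n\rangle .
\]
Write $y^*-y_n=(y^*-y_{n-1})-(y_n-y_{n-1})$. Concavity of $\Phi(x_n,\cdot)$ gives $-\langle\nabla_y\Phi(x_n,y_{n-1}),y^*-y_{n-1}\rangle\le\Phi(x_n,y_{n-1})-\Phi(x_n,y^*)$. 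By definition \eqref{tt2}, the resulting dual coupling term becomes
\[
\tau_n\bigl(\Phi_n^y+\Phi(x_n,y_n)-\Phi(x_n,y^*)\bigr).
\]

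\emph{Assembly.} Add the primal and dual estimates. The $\Phi(x_n,y_n)$ terms cancel. The left-hand side is
\[
\tau_n\bigl[g(x_n)+\Phi(x_n,y^*)-f^*(y^*)-g(x^*)-\Phi(x^*,y_n)+f^*(y_n)\bigr]=\tau_n J(x_n,y_n),
\]
by \eqref{J}, and the right-hand side is exactly that of \eqref{t0}.

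The step needing the most care is the primal one: it uses the prox step at $x_{n+1}$, which is produced by step size $\tau_n$, together with the previous step size $\tau_{n-1}$. The rescaling by $\delta_n$ must be applied to the second prox inequality so that the gradient terms merge into $\theta_n$ with the stated sign. No Lipschitz bound or line search condition is needed for this lemma.
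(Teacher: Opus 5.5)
Your proof is correct and follows essentially the paper's argument. You use the same three proximal inequalities from Lemma \ref{fact2.2}, the same rescaling of the $x_n$-step inequality by $\delta_n$, concavity of $\Phi(x_n,\cdot)$ at $y_{n-1}$ and convexity of $\Phi(\cdot,y_n)$ at $x_n$. The only cosmetic difference is that you bound the primal and dual coupling terms separately and let $\Phi(x_n,y_n)$ cancel at assembly, whereas the paper collects them into a single quantity $G_n$ before applying convexity.
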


    \begin{proof}
        It follows from (\ref{x1}), (\ref{y1}) and Lemma \ref{fact2.2} that
        \begin{align}
            \tau_n (g(x_{n+1}) - g(x^*))  &\leq \langle x_{n+1} - x_{n+1}^{md}+ \tau_n \nabla_x \Phi(x_n, y_n), x^* - x_{n+1}\rangle , \label{t1}\\
             \tau_{n-1} (g(x_{n}) - g(x_{n+1}))  &\leq \langle x_{n} - x_n^{md}+ \tau_{n-1} \nabla_x \Phi(x_{n-1}, y_{n-1}), x_{n+1}- x_{n} \rangle ,  \label{t2} \\
             \tau_{n} (f^*(y_{n}) - f^*(y^*))  & \leq  
             \langle \frac{1}{\beta}(y_{n} - y_{n}^{md})- \tau_{n} \nabla_y \Phi(x_{n}, y_{n-1}),y^*- y_{n} \rangle.\label{t3}
        \end{align}
       Multiplying  \eqref{t2} by $\delta_n=\tau_{n}/\tau_{n-1}$ yields
        \begin{align}
            \tau_{n} (g(x_{n}) - g(x_{n+1})) \leq \langle \delta_n( x_{n} - x_n^{md})+ \tau_{n} \nabla_x \Phi(x_{n-1}, y_{n-1}), x_{n+1}- x_{n} \rangle . \label{t4} 
        \end{align}
        It follows from the right-hand side of (\ref{t}) that 
        \begin{align*}
            -\langle \nabla_y \Phi(x_n, y_{n-1}),\, y^* - y_n \rangle
            &= \langle \nabla_y \Phi(x_n, y_{n-1}),\, y_n - y_{n-1} \rangle 
            + \langle \nabla_y \Phi(x_n, y_{n-1}),\, y_{n-1} - y^* \rangle  \\
            &\leq \langle \nabla_y \Phi(x_n, y_{n-1}),\, y_n - y_{n-1} \rangle
            + \Phi(x_n, y_{n-1}) - \Phi(x_n, y^*).
        \end{align*}
        Taking the sum of (\ref{t1}), (\ref{t3}) and (\ref{t4})  and using the above inequality and the definition of $J(\cdot,\cdot)$ in (\ref{J}), we obtain  
        \begin{equation} \label{t5}
			\begin{split}
    			\tau_n J(x_n,y_n) 
                \le & \langle x_{n+1}-x_{n+1}^{md},\, x^*-x_{n+1}\rangle +\delta_n \langle x_{n} - x_{n}^{md},  x_{n+1}- x_{n} \rangle \\
                &+\frac{1}{\beta}\langle y_n-y_{n}^{md},\,y^*-y_n\rangle
                +\tau_n G_n,
			\end{split}
		\end{equation}
        where
        \begin{align*}
            G_n &= \langle \nabla_x \Phi(x_n, y_{n}),\,x^*-x_{n+1}\rangle
            +\langle \nabla_x \Phi(x_{n-1},y_{n-1}),\,x_{n+1}-x_n\rangle\\
            &\quad +\langle \nabla_y \Phi(x_n,y_{n-1}),\,y_n-y_{n-1} \rangle + \Phi(x_n,y_{n-1}) - \Phi(x^*,y_n).
        \end{align*}
        Combining (\ref{tt1})-(\ref{tt2}) and the convexity of $\Phi$ in $x$, we have that 
        \begin{align*}
            G_n &= \langle \theta_n,x_n-x_{n+1}\rangle +\Phi_n^y
            +(\Phi(x_n,y_n)+\langle\nabla\Phi(x_n,y_n),x^*-x_n\rangle
            -\Phi(x^*,y_n)\big)\\
            &\le \langle\theta_n,x_n-x_{n+1}\rangle +\Phi_n^y.
        \end{align*}
       Together with (\ref{t5}), it follows that (\ref{t0}).

    \end{proof}

    For any $(x^*,y^*)\in\Omega$, we define the   two sequences  $\left\{ {{a_n}} \right\}\;{\rm{and }}\left\{ {{b_n}} \right\}$ by 
    \begin{equation} \label{an1}
        \left\{
        \begin{split}
            a_n&=a\|x_{n}- x^*\|^2 +(1-a)\frac{\psi}{\psi-1}\| z_{n+1}-x^* \| ^2+(a+\omega_1 \delta_{n-1})\| x_{n} - x_{n-1}\|^2  \\
            &\quad+ \frac{b}{\beta} \|y_{n-1} - y^*\|^2 + \frac{1}{\beta} \|\bar{z}_n - y^*\|^2,\\
            b_n&=\omega_1 \delta_{n-1}\|x_{n} - x_{n-1}\|^2 + \frac{b}{\beta}\| y_n - y_{n-1}\|^2 + \frac{1}{\beta}(1-b)(1+b)\|y_n - \bar{z}_n\|^2 \\
            &\quad-\frac{\tau_n \tau_{n-1}}{(1-a)\xi }\| \theta_n \|^2 - 2\tau_n \Phi_n^y.
        \end{split}
        \right .
    \end{equation}
    
    \begin{lemma}\label{le2}   Suppose Assumption \ref{assump:2.1}, \ref{assump:2.2} and \ref{assump:2.3} hold, and let $\left\{ {{x_n},{y_n}} \right\}$ be the sequence generated by NPDAL-n. Then, we have
         $2\tau_n J(x_n, y_n) +a_{n+1} \leq a_n-b_n$, where $a_n$ and $b_n$ are defined in (\ref{an1}).
    \end{lemma}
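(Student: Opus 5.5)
Multiply Lemma \ref{le1} by two and turn each inner product on its right-hand side into differences of squared norms, so that everything telescopes into $a_n - a_{n+1}$ plus nonpositive remainders. The remainders are then controlled by $-b_n$.

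\textbf{Primal part.} Write $x_{n+1}^{md} = a x_n + (1-a) z_{n+1}$ and split
\[
\langle x_{n+1}-x_{n+1}^{md}, x^*-x_{n+1}\rangle = a\langle x_{n+1}-x_n, x^*-x_{n+1}\rangle + (1-a)\langle x_{n+1}-z_{n+1}, x^*-x_{n+1}\rangle .
\]
Apply \eqref{identity_inner} to each piece. The first gives $a(\|x_n-x^*\|^2-\|x_{n+1}-x^*\|^2-\|x_{n+1}-x_n\|^2)$.

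For the second, use $x_{n+1} = \frac{\psi}{\psi-1}z_{n+2} - \frac{1}{\psi-1}z_{n+1}$ and then \eqref{identity_convex}. This is exactly the PDAc-L computation of \cite{ref25}. It produces
\[
\frac{\psi}{\psi-1}\bigl(\|z_{n+1}-x^*\|^2-\|z_{n+2}-x^*\|^2\bigr)
\]
together with negative terms proportional to $\|x_{n+1}-z_{n+1}\|^2$ and $\|x_{n+1}-z_{n+2}\|^2$. All of this is weighted by $1-a$.

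For $\delta_n\langle x_n-x_n^{md}, x_{n+1}-x_n\rangle$, again split $x_n-x_n^{md} = a(x_n-x_{n-1}) + (1-a)(x_n-z_n)$.
\begin{itemize}
\item The $a$-piece is bounded by Young's inequality with parameter $\xi_1$: $\frac{a\delta_n}{\xi_1}\|x_{n+1}-x_n\|^2 + a\xi_1\delta_n\|x_n-x_{n-1}\|^2$. Since $\delta_n\le\varphi$ and $\xi_1\varphi<1$, the second term is absorbed by the $a\|x_n-x_{n-1}\|^2$ in $a_n$. This is why $a_n$ carries $(a+\omega_1\delta_{n-1})\|x_n-x_{n-1}\|^2$, and why $\omega_1$ subtracts $a/\xi_1$.
\item The $(1-a)$-piece uses $x_n - z_n = \psi(z_{n+1}-z_n)/(\psi-1)$, rewritten via \eqref{identity_inner}. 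It produces $\|x_{n+1}-x_n\|^2$ with the coefficient $2\psi-\frac{\psi^3\varphi}{1+\psi}$, again as in \cite{ref25}, using $\delta_n\le\varphi$ and $\delta_n\le 1+\psi$-type bounds.
\end{itemize}

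The term $2\tau_n\langle\theta_n, x_n-x_{n+1}\rangle$ is bounded by
\[
\frac{\tau_n\tau_{n-1}}{(1-a)\xi}\|\theta_n\|^2 + (1-a)\xi\,\delta_n\|x_{n+1}-x_n\|^2 ,
\]
using $\tau_n = \delta_n\tau_{n-1}$. This accounts for the $-\xi$ in $\omega$. Collecting the coefficient of $\delta_n\|x_{n+1}-x_n\|^2$ should give $-(1-a)\omega - \ldots$, i.e.\ $-\omega_1\delta_n\|x_{n+1}-x_n\|^2$. This is then split as the term $(a+\omega_1\delta_n)\|x_{n+1}-x_n\|^2$ moved into $a_{n+1}$, plus the positive part $\omega_1\delta_{n-1}\|x_n-x_{n-1}\|^2$ appearing in $b_n$. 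Some care is needed in how the $a\|\cdot\|^2$ term from the first identity is reused.

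\textbf{Dual part.} Write $y_n - y_n^{md} = b(y_n-y_{n-1}) + (1-b)(y_n-\bar z_n)$, and use $\bar z_{n+1} = (1-b)y_n + b\bar z_n$. Applying \eqref{identity_inner} to $\frac{2}{\beta}\langle\cdot, y^*-y_n\rangle$ gives
\[
\frac{b}{\beta}\bigl(\|y_{n-1}-y^*\|^2-\|y_n-y^*\|^2-\|y_n-y_{n-1}\|^2\bigr) + \frac{1-b}{\beta}\bigl(\|\bar z_n-y^*\|^2-\|y_n-y^*\|^2-\|y_n-\bar z_n\|^2\bigr).
\]
Then apply \eqref{identity_convex} to express
\[
\|\bar z_{n+1}-y^*\|^2 = (1-b)\|y_n-y^*\|^2 + b\|\bar z_n-y^*\|^2 - b(1-b)\|y_n-\bar z_n\|^2 .
\]
After eliminating $\|y_n-y^*\|^2$, this should yield the differences $\frac{b}{\beta}\|y_{n-1}-y^*\|^2 + \frac1\beta\|\bar z_n-y^*\|^2$ minus the same quantities at $n+1$, with remainder $-\frac{b}{\beta}\|y_n-y_{n-1}\|^2 - \frac{(1-b)(1+b)}{\beta}\|y_n-\bar z_n\|^2$. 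The $2\tau_n\Phi_n^y$ term is simply kept.

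Summing the primal and dual parts gives $2\tau_nJ + a_{n+1}\le a_n - b_n$.

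\textbf{Main obstacle.} The main obstacle is the primal bookkeeping. The $z$-identities from \cite{ref25} must be combined with the $a$-weighted $x$ terms so that the coefficients match exactly the definition of $a_n$, in particular the $(1-a)\frac{\psi}{\psi-1}\|z_{n+1}-x^*\|^2$ weight. One must also verify that the leftover $\|x_{n+1}-z_{n+1}\|^2$-type terms can be discarded. I would first redo the $a=0$ case of \cite{ref25}, then track the extra $a$-terms linearly.
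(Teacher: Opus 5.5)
Your setup matches the paper's. You use the same $a/(1-a)$ splits of $x_{n+1}-x_{n+1}^{md}$ and $x_n-x_n^{md}$, the same Young inequalities (with $\xi_1$, and with weight $(1-a)\xi/\tau_{n-1}$ for $\theta_n$), and the same $z$- and $\bar z$-identities. Your dual computation is correct. Splitting the inner product linearly before applying \eqref{identity_inner} is a slightly tidier route than the paper's expansion of $\|y_n-y_n^{md}\|^2$ and $\|y_n^{md}-y^*\|^2$.

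\textbf{The gap.} It is in the one step that actually produces $\omega$. Write $x_n-z_n=\psi(x_n-z_{n+1})$; your form $\frac{\psi}{\psi-1}(z_{n+1}-z_n)$ has no common base point for \eqref{identity_inner}. Then the doubled $(1-a)$-piece of the $\delta_n$-term equals
\[
(1-a)\psi\delta_n\bigl(\|x_{n+1}-z_{n+1}\|^2-\|x_n-z_{n+1}\|^2-\|x_{n+1}-x_n\|^2\bigr).
\]
Its first term has the wrong sign. So the term $-(1-a)\bigl(1+\frac{1}{\psi}\bigr)\|x_{n+1}-z_{n+1}\|^2$ from the $x^*$-piece is not a leftover to discard, as your last paragraph proposes. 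It is needed to absorb that term. Even after absorbing it, the net coefficient of $\|x_{n+1}-z_{n+1}\|^2$ on the left, namely $(1-a)\bigl(1+\frac{1}{\psi}-\psi\delta_n\bigr)$, is negative whenever $\delta_n>(1+\psi)/\psi^2$. The algorithm allows this; it already happens at $\delta_n=1$ once $\psi$ exceeds the golden ratio. So the net term cannot be dropped either, and no bound of the type $\delta_n\le(1+\psi)/\psi^2$ is available to rescue it. Consequently identity \eqref{identity_inner} alone cannot produce the coefficient $2\psi-\frac{\psi^3\varphi}{1+\psi}$ you attribute to it.

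\textbf{The missing idea.} Pair the net term with $(1-a)\psi\delta_n\|x_n-z_{n+1}\|^2$ through Lemma \ref{fact2.4}, used in vector form: for any real $u,v$ with $u+v>0$,
\[
u\|p\|^2+v\|q\|^2-\frac{uv}{u+v}\|p+q\|^2=\frac{1}{u+v}\|up-vq\|^2\ge 0 .
\]
Take $u=\psi\delta_n$ and $v=1+\frac{1}{\psi}-\psi\delta_n$, so $u+v=1+\frac{1}{\psi}$, together with $p=z_{n+1}-x_n$ and $q=x_{n+1}-z_{n+1}$. The scalar version combined with $\|x_{n+1}-x_n\|\le P+Q$ needs $uv\ge 0$, so it does not cover $v<0$. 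The vector form yields $\psi\delta_n\bigl(1-\frac{\psi^2\delta_n}{1+\psi}\bigr)\|x_{n+1}-x_n\|^2$. Adding the $\psi\delta_n$ from \eqref{identity_inner} gives
\[
\delta_n\Bigl(2\psi-\frac{\psi^3\delta_n}{1+\psi}\Bigr)\ge\delta_n\Bigl(2\psi-\frac{\psi^3\varphi}{1+\psi}\Bigr),
\]
using only $\delta_n\le\varphi$. This is the source of the $-\frac{\psi^3\varphi}{1+\psi}$ in $\omega$ and of the restriction $\psi<1+\sqrt{3}$.

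\textbf{Bookkeeping of $\omega_1$.} Your account is garbled but harmless. The term $(a+\omega_1\delta_n)\|x_{n+1}-x_n\|^2$ goes entirely into $a_{n+1}$. The term $\omega_1\delta_{n-1}\|x_n-x_{n-1}\|^2$ in $b_n$ simply cancels its copy in $a_n$. What remains is $a\|x_n-x_{n-1}\|^2\ge a\xi_1\delta_n\|x_n-x_{n-1}\|^2$.
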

    
    \begin{proof}
        It follows from the definitions of $x_n^{md}$ and $z_n$ that $x_n-z_n=\psi(x_n-z_{n+1})$. By Lemma \ref{le1}, we obtain
        \begin{equation} \label{eq11}
			\begin{split}
				\tau_n J(x_n,y_n) 
                \le & a\langle x_{n+1}- x_{n},\, x^*-x_{n+1}\rangle +(1-a)\langle x_{n+1}-z_{n+1} ,\, x^*-x_{n+1}\rangle  \\
                &+a\delta_n \langle x_{n} - x_{n-1}, x_{n+1}- x_{n} \rangle +(1-a)\psi\delta_n \langle x_{n} - z_{n+1}, x_{n+1} - x_{n} \rangle \\
                &+\frac{1}{\beta}\langle y_n-y_{n}^{md},\,y^*-y_n\rangle +\tau_n \langle\theta_n,x_n-x_{n+1}\rangle +\tau_n\Phi_n^y.
			\end{split}
		\end{equation}
       It follows from  Young's inequality  that
       \begin{equation} \label{eqYoung1}
        2a\delta_n \langle x_{n} - x_{n-1}, x_{n+1}- x_{n} \rangle \le \xi_1 a\delta_n\| x_{n} - x_{n-1}\|^2+\frac{a\delta_n}{\xi_1}\| x_{n+1}- x_{n}\|^2 ,
        \end{equation}
        where $\xi_1 \in (0,\frac{1}{\varphi})$. Combining with the definition of $\delta_n$ yields that   $\xi_1 a \delta_n \le \xi_1 a \varphi <a$.
        
        Substituting  \eqref{eqYoung1} into \eqref{eq11} and using  \eqref{identity_inner}, we can  obtain
        \begin{equation} \label{eq12}
			\begin{split}
				& \quad 2\tau_n J(x_n,y_n) + a\| x_{n+1} - x_{n}\|^2+a\|x_{n+1}- x^*\|^2  \\
                &+(1-a)\psi\delta_n \|x_{n} - z_{n+1}\|^2+ (1-a)\psi\delta_n\| x_{n+1}- x_{n}\|^2  \\
                & + (1-a)(1-\psi\delta_n)\|x_{n+1}-z_{n+1}\|^2+(1-a)\|x_{n+1}-  x^*\|^2 \\
                &+\frac{1}{\beta}\|y_n-y_{n}^{md}\|^2+\frac{1}{\beta}\|y_n-y^*\|^2  \\
                \le & a\|x_{n}- x^*\|^2 +(1-a)\|z_{n+1}-  x^*\|^2  +2\tau_n \langle\theta_n,x_n-x_{n+1}\rangle +2\tau_n\Phi_n^y  \\
                &+ a\| x_{n} - x_{n-1}\|^2+\frac{a\delta_n}{\xi_1}\| x_{n+1}- x_{n}\|^2 +\frac{1}{\beta}\|y_n^{md}-y^*\|^2.
			\end{split}
		\end{equation}
        By   \eqref{identity_convex}, we obtain
        \begin{align}
            \| y_n - y_{n}^{md}\|^2 & = b\| y_n - y_{n-1}\|^2+(1-b)\| y_n - \bar
            z_{n}\|^2-b(1-b)\| y_{n-1} - \bar
            z_{n}\|^2,\nonumber \\
            \| y_{n}^{md}- y^*\|^2 & = b\|y_{n-1} - y^*\|^2 + (1-b)\|\bar{z}_n - y^*\|^2-b(1-b)\|y_{n-1} - \bar{z}_n\|^2 , \nonumber \\
            \| y_n - y^*\|^2 & = \frac{1}{1-b}\| \bar{z}_{n+1} - y^*\|^2-\frac{b}{1-b}\| \bar{z}_{n} - y^*\|^2 + \frac{b}{(1-b)^2}\| \bar{z}_{n+1} -\bar{z}_{n}\|^2  \nonumber \\
            &=\frac{1}{1-b}\| \bar{z}_{n+1} - y^*\|^2-\frac{b}{1-b}\| \bar{z}_{n} - y^*\|^2 + b\| y_{n} -\bar{z}_{n}\|^2,\nonumber\\
            \| x_{n+1}-x^* \| ^2 &=\frac{\psi}{\psi-1}\| z_{n+2}-x^* \| ^2 -\frac{1}{\psi-1}\| z_{n+1}-x^* \| ^2+\frac{\psi}{(\psi-1)^2}\| z_{n+2}-z_{n+1} \|^2  \nonumber \\
            &=\frac{\psi}{\psi-1}\| z_{n+2}-x^* \| ^2 -\frac{1}{\psi-1}\| z_{n+1}-x^* \| ^2+\frac{1}{\psi}\| x_{n+1}-z_{n+1} \|^2.\nonumber
        \end{align}
        The last two identities employ relations $\bar{z}_{n+1} -\bar{z}_{n} = (1-b)(y_{n} -\bar{z}_{n})$ and $z_{n+2}-z_{n+1} = \frac{\psi - 1}{\psi}(x_{n+1}-z_{n+1})$, respectively. Therefore, we deduce that
        \begin{equation} \label{eq13}
			\begin{split}
                & \quad 2\tau_n J(x_n,y_n) +a\| x_{n+1} - x_{n}\|^2+a\|x_{n+1}- x^*\|^2 +(1-a)\frac{\psi}{\psi-1}\| z_{n+2}-x^* \| ^2 \\
                & +(1-a)\bigg(\psi\delta_n \| z_{n+1}-x_{n} \|^2+ \psi\delta_n\| x_{n+1}- x_{n}\|^2 + \left(1+\frac{1}{\psi}-\psi\delta_n\right)\|x_{n+1}-z_{n+1}\|^2 \bigg) \\
                & +\frac{1}{\beta}(1-b)(1+b)\| y_n - \bar z_{n}\|^2 + \frac{b}{\beta}\| y_n - y^*\|^2+\frac{1}{\beta}\| \bar{z}_{n+1} - y^*\|^2 \\
                \le & a\|x_{n}- x^*\|^2 +(1-a)\frac{\psi}{\psi-1}\| z_{n+1}-x^* \| ^2 +\frac{a\delta_n}{\xi_1}\| x_{n+1}- x_{n}\|^2 -\frac{b}{\beta}\| y_n - y_{n-1}\|^2 \\
                & + \frac{b}{\beta}\|y_{n-1} - y^*\|^2 + \frac{1}{\beta}\|\bar{z}_n - y^*\|^2 + 2\tau_n \langle\theta_n,x_n-x_{n+1}\rangle +2\tau_n\Phi_n^y+ a\| x_{n} - x_{n-1}\|^2.
            \end{split}
		\end{equation}
        Let  $u=\psi \delta_n >0$, $v=1+\frac{1}{\psi}-\psi\delta_n$, $P=\| z_{n+1}-x_{n} \| $ and  $Q=\|x_{n+1}-z_{n+1}\| $. Then we have    $u+v=1+\frac{1}{\psi}>0$, which together with  Lemma \ref{fact2.4} and  $\| x_{n+1}- x_{n}\|  \le P + Q$ implies that
        $$\psi \delta_n(1-\frac{\psi^2 \delta_n}{1+\psi}) \| x_{n+1}- x_{n}\|^2 \le \psi \delta_n\| z_{n+1}-x_{n} \|^2+(1+\frac{1}{\psi}-\psi\delta_n)\|x_{n+1}-z_{n+1}\|^2.$$
        By Young's inequality   and   $\xi>0$, we have 
        $$
        2\langle \theta_n,x_n-x_{n+1} \rangle \leq \frac{(1-a)\xi}{\tau_{n-1}}\|x_n-x_{n+1}\|^2+\frac{\tau_{n-1}}{(1-a)\xi}\| \theta_n \|^2.
        $$ 
        Substituting this above inequality back into \eqref{eq13}, we obtain
        \begin{equation} \label{eq14}
			\begin{split}
                &\quad 2\tau_n J(x_n,y_n) \\
                &+ a\| x_{n+1} - x_{n}\|^2 + a\| x_{n+1}- x^*\|^2 + (1-a)\frac{\psi}{\psi-1}\| z_{n+2}-x^* \|^2 \\
                & + \delta_n \bigg((1-a) \left(2\psi -\xi-\frac{\psi^3 \delta_n}{1+\psi}\right)-\frac{a}{\xi_1} \bigg) \| x_{n+1}- x_{n}\|^2 \\
                &  + \frac{b}{\beta}\| y_n - y^*\|^2 + \frac{1}{\beta}\| \bar{z}_{n+1} - y^*\|^2 + \frac{1}{\beta}(1-b)(1+b)\| y_n - \bar z_{n}\|^2\\
                \le &  a\| x_{n} - x_{n-1}\|^2 + a\|x_{n}- x^*\|^2 + (1-a)\frac{\psi}{\psi-1}\| z_{n+1}-x^* \|^2  \\
                & + \frac{b}{\beta}\|y_{n-1} - y^*\|^2 +  \frac{1}{\beta} \|\bar{z}_n - y^*\|^2 + \frac{\tau_n\tau_{n-1}}{(1-a)\xi}\| \theta_n \|^2 + 2\tau_n\Phi_n^y - \frac{b}{\beta}\| y_n - y_{n-1}\|^2.
            \end{split}
		\end{equation}
        By $2\psi -\xi-\dfrac{\psi^3 \delta_n}{1+\psi} \ge \omega$, $\omega_1=(1-a)\omega-\dfrac{a}{\xi_1} >0$, $\delta_{n} \leq \varphi$ and the definitions of $a_n$ and $b_n$ in \eqref{an1}, we can obtain
        $$
        2\tau_n J(x_n, y_n) +a_{n+1} \leq a_n-b_n.
        $$
    \end{proof}
    
    To establish the lower bound of the step size sequence, we analyze the threshold under which the linesearch condition \eqref{lin} is guaranteed to hold. By utilizing the Lipschitz continuity of the gradients and substituting the corresponding bounds into the linesearch discrepancy, we identify the critical values that make the coefficients of both $\|x_n - x_{n-1}\|^2$ and $\|y_n - y_{n-1}\|^2$ positive. By taking the minimum of these localized thresholds to ensure simultaneous compliance, we define the uniform step size lower bound $\underline{\tau} > 0$ as follows:
    \begin{equation}\label{tau-}
        \underline{\tau} := \min \left\{ \frac{(1-a)\nu\xi\omega_1}{2L_{xx}^2\tau_{\max}}, \frac{b\nu(1-a)\xi}{\beta(2L_{xy}^2\tau_{\max} + L_{yy}(1-a)\xi)} \right\}.
    \end{equation}

    \begin{lemma}\label{le3}
    Suppose that Assumptions 1--3 hold, and let $\{(x_n, y_n, z_n, \bar{z}_n)\}$ be the sequence generated by Algorithm 1. Then the following claims hold:
        \begin{enumerate}
            \item[(i)] The linesearch step in Algorithm \ref{Alg1} always terminates, i.e., the step size sequence $\{\tau_n\}$ is well-defined;
            \item[(ii)] The sequences $\{x_n\}$, $\{y_n\}$, $\{z_n\}$, and $\{\bar{z}_n\}$ are all bounded ; hence,  the intermediate sequences $\{x_n^{md}\}$ and $\{y_n^{md}\}$ are also bounded;
            \item[(iii)] There exists a positive constant $\underline{\tau} > 0$ such that if the trial step size satisfies $\tau \le \underline{\tau}$, then the linesearch condition \eqref{lin} is satisfied;
            \item[(iv)] If $\tau_0 \ge \underline{\tau}$ and $\tau_{max} \ge \mu \underline{\tau}$, then the step size sequence $\{\tau_n\}$ and ${\delta_n}$ are strictly bounded away from zero. Indeed, it holds that $\tau_n \ge \mu \underline{\tau} > 0$ and $\delta_n \ge \mu \underline{\tau}/\tau_{max}$ for all $n \ge 1$.
        \end{enumerate}
    \end{lemma}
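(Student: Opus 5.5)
The four claims are interdependent. I would prove them together by induction on $n$, following the PDAc-L analysis in \cite{ref25}. The inductive hypothesis is that $x_0,\dots,x_{n-1}$, $y_0,\dots,y_{n-1}$, $z_0,\dots,z_n$ and $\bar z_0,\dots,\bar z_n$ lie in a fixed bounded set $\mathcal{B}$. The Lipschitz constants $L_{xx},L_{yy},L_{xy}$ of Assumption \ref{assump:2.2} are taken on a slightly enlarged bounded set that contains every trial point of the linesearch.

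\textbf{Claim (iii), the core estimate.} Fix $n$ and a trial $\tau\le\underline{\tau}$. Then $x_n$ does not depend on $\tau$, and $y_n=y_n(\tau)$ is given by \eqref{y1}. Using Assumption \ref{assump:2.2}(ii),
\[
\|\theta_n\|^2\le 2L_{xx}^2\|x_n-x_{n-1}\|^2+2L_{xy}^2\|y_n-y_{n-1}\|^2 .
\]
By the left-hand inequality of \eqref{t}, $\Phi_n^y\le \frac{L_{yy}}{2}\|y_n-y_{n-1}\|^2$. With $\tau_{n-1}\le\tau_{\max}$, the left side of \eqref{lin} is therefore at most
\[
\frac{2L_{xx}^2\tau\tau_{\max}}{(1-a)\xi}\|x_n-x_{n-1}\|^2+\Big(\frac{2L_{xy}^2\tau\tau_{\max}}{(1-a)\xi}+L_{yy}\tau\Big)\|y_n-y_{n-1}\|^2 .
\]
The right side of \eqref{lin} is at least $\nu r_n\ge \nu\omega_1\delta_{n-1}\|x_n-x_{n-1}\|^2+\frac{\nu b}{\beta}\|y_n-y_{n-1}\|^2$. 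The $y$-coefficients compare correctly exactly when $\tau$ is below the second entry of \eqref{tau-}. For the $x$-coefficient, $\delta_{n-1}$ appears on the right, so I would write $\delta_{n-1}=\tau_{n-1}/\tau_{n-2}$ and move the factor $\tau_{n-1}$ from the left, comparing $\tau\tau_{n-1}$ with $\tau_{n-1}/\tau_{n-2}$. Since $\tau_{n-2}\le\tau_{\max}$, this reduces to $\tau\le (1-a)\xi\nu\omega_1/(2L_{xx}^2\tau_{\max})$, the first entry of \eqref{tau-}. I expect the exact bookkeeping of the $\tau_{\max}$ factors here to be delicate, and if a constant is off I would simply adjust it. Claim (i) then follows immediately: the trials $\tau\mu^i\to0$ eventually fall below $\underline{\tau}$.

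\textbf{Claim (ii), boundedness.} Once the linesearch terminates, \eqref{lin} is exactly the statement $b_n\ge \nu r_n+(1-\nu)c_n-\nu r_n-(1-\nu)c_n+(1-\nu)(r_n-c_n)$. More usefully, $b_n\ge(1-\nu)(r_n-c_n)$. Lemma \ref{le2} together with $J\ge0$ gives $a_{n+1}\le a_n-b_n$. Summing, the nonmonotone averages $c_n$ are controlled by $\eta<1$: $\sum_{k\le N} c_k\le \eta\sum_{k<N} r_k$. Hence
\[
a_{N+1}+(1-\nu)(1-\eta)\sum_{k\le N} r_k\le a_1 ,
\]
so $\{a_n\}$ is bounded. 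Each of $\|x_n-x^*\|$, $\|z_{n+1}-x^*\|$ and $\|\bar z_n-y^*\|$ is bounded by a multiple of $\sqrt{a_n}$, because $a,1-a,1/\beta>0$. For $y_n$, I would use $\bar z_{n+1}=(1-b)y_n+b\bar z_n$ with $b<1$, which gives $\|y_n-y^*\|\le(\|\bar z_{n+1}-y^*\|+b\|\bar z_n-y^*\|)/(1-b)$. Since the bound depends only on $a_1$, $\mathcal{B}$ can be fixed in advance, which closes the induction. The intermediate points $x_n^{md}$ and $y_n^{md}$ are convex combinations of bounded points and so are bounded.

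The genuine obstacle is that trial points $y_n(\tau)$ must also lie in a bounded set before (iii) can be applied. I would handle this using nonexpansiveness of the prox: $\tau\mapsto y_n(\tau)$ is bounded for $\tau\in(0,\tau_{\max}]$ when $x_n,y_{n-1},y_n^{md}$ are bounded, by comparing with the fixed-point relation $y^*=\mathrm{Prox}_{\beta\tau f^*}(y^*+\beta\tau\nabla_y\Phi(x^*,y^*))$.

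\textbf{Claim (iv).} Each accepted $\tau_n$ is either the initial trial $\min\{\varphi\tau_{n-1},\tau_{\max}\}$ or satisfies $\tau_n/\mu>\underline{\tau}$, since the previous trial was rejected and (iii) then forces it to exceed $\underline{\tau}$. I would argue by induction from $\tau_0\ge\underline{\tau}$. If $\tau_n$ is the initial trial, then $\tau_n\ge\min\{\varphi\mu\underline{\tau},\tau_{\max}\}\ge\mu\underline{\tau}$, using $\varphi>1$ and $\tau_{\max}\ge\mu\underline{\tau}$. Otherwise $\tau_n>\mu\underline{\tau}$ directly. Finally, $\delta_n=\tau_n/\tau_{n-1}\ge\mu\underline{\tau}/\tau_{\max}$.
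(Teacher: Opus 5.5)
\paragraph{Verdict.} Your proof follows the paper's route, but there is a genuine (fixable) gap in the nonmonotone summation in (ii), and because of it your induction does not close for the first $M$ iterations.

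\paragraph{What matches the paper.} The coefficient comparison in (iii) is the same, including cancelling $\tau_{n-1}$ against $\delta_{n-1}=\tau_{n-1}/\tau_{n-2}$, and it gives exactly the two entries of \eqref{tau-}. Part (ii) rests on the same inequality $a_{n+1}\le a_n-b_n$ with $b_n\ge(1-\nu)(r_n-c_n)$, and (iv) is the same induction.

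\paragraph{The step that fails.} Your claim $\sum_{k\le N}c_k\le\eta\sum_{k<N}r_k$ is false. By \eqref{rn}, $|I_k|=\min\{M,k-1\}$, so for $k\le M$ the average runs over fewer than $M$ terms and the early $r_i$ are overcounted. For $M\ge2$ already $c_2+c_3=\eta(\tfrac32 r_1+\tfrac12 r_2)$, which exceeds $\eta(r_1+r_2)$ whenever $r_1>r_2$. In general $r_1$ enters $\sum_k c_k$ with coefficient $\eta(1+\tfrac12+\dots+\tfrac1M)$, which exceeds $2$ for the values $M=5$, $\eta=0.9$ used in Section~\ref{section6}. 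So $a_{N+1}+(1-\nu)(1-\eta)\sum_{k\le N}r_k\le a_1$ does not follow. The paper sums only from $n=M+1$, where $|I_n|=M$ and each $r_i$ is counted with total weight at most one. This yields $a_{k+1}\le a_{M+1}+(1-\nu)\eta\sum_{n=1}^{M}r_n$.

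\paragraph{Consequence and repair.} That bound depends on the first $M$ iterates, not only on $a_1$. Your closing step (``$\mathcal B$ can be fixed in advance'') therefore breaks down for $n\le M$. You would be taking termination of those linesearches from (iii) on a set that can only be defined after they have run, unless you add a separate a priori estimate. The simplest repair is the paper's ordering:
\begin{enumerate}
\item Prove (i) at each fixed $n$ with Lipschitz constants allowed to depend on $n$, so the whole sequence is well defined. Your own nonexpansiveness estimate
\[
\|y_n(\lambda)-y^*\|\le\|y_n^{md}-y^*\|+\beta\lambda\|\nabla_y\Phi(x_n,y_{n-1})-\nabla_y\Phi(x^*,y^*)\|
\]
shows that, for fixed $n$, all trial points produced by \eqref{y1} with $\lambda\le\tau_{\max}$ lie in a bounded set.
\item Derive (ii) from Lemma~\ref{le2} together with the paper's summation from $M+1$.
\item Only then fix uniform constants for (iii)--(iv).
\end{enumerate}

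\paragraph{Where you are more careful than the paper.} With the repair your proof is sound, and in two respects it improves on the paper's.
\begin{itemize}
\item The paper's (iii), as invoked in (iv), applies the constants coming from the bounded set of (ii) to the rejected trial point $y_n(\tau_n/\mu)$, which is not a member of $\{y_n\}$. Your estimate supplies the missing uniform bound. This needs $\nabla_y\Phi$ to be bounded on bounded sets (e.g.\ continuous), which Assumption~\ref{assump:2.2}(ii) does not literally state.
\item Your bound on $y_n$ via $\bar z_{n+1}=(1-b)y_n+b\bar z_n$ also covers $b=0$. In that case the term $\frac{b}{\beta}\|y_{n-1}-y^*\|^2$ of $a_n$, which the paper relies on, gives nothing.
\end{itemize}
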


    \begin{proof}
        (i) For any $n\ge 1$, we define
        \begin{equation} \label{eq:def_ylambda}
            \left.
            \begin{aligned}
                y_n(\lambda) &:= \operatorname{Prox}_{\beta \lambda f^*}\!\bigl(y_{n-1}+\beta \lambda \nabla_y\Phi(x_n,y_{n-1})\bigr), \quad \lambda>0, \\
                \theta_n(\lambda) &:= \nabla_x \Phi\bigl(x_n,y_n(\lambda)\bigr)-\nabla_x \Phi(x_{n-1},y_{n-1}), \\
                \Phi_n^y(\lambda) &:= \Phi(x_n,y_{n-1})+\bigl\langle \nabla_y \Phi(x_n,y_{n-1}),\,y_n(\lambda)-y_{n-1}\bigr\rangle-\Phi\bigl(x_n,y_n(\lambda)\bigr).
            \end{aligned}
            \right\}
        \end{equation}
        By $\tau=\min\{\varphi\tau_{n-1},\tau_{\max}\}$ and the non-decreasing property of the proximal gradient step length with respect to the step size parameter $\lambda$, we know that
        $$
        \|y_n(\lambda)-y_{n-1}\|\le r:=\|\,y_n(\tau)-y_{n-1}\|< +\infty 
        $$ 
        holds  for any $\lambda\in(0,\tau].$ 
        This implies that the curve $\{y_n(\lambda):\lambda\in(0,\tau]\}$ lies in the closed ball $B[y_{n-1};r]$. Assume, by contradiction, that the linesearch procedure defined in Algorithm \ref{Alg1} fails to terminate at the $n$-th iteration. Then, for all $i=0,1,2,\dots$ and $\lambda=\tau\mu^i$, we have
        \begin{align}
            \frac{\lambda \tau_{n-1}}{(1-a)\xi}||\theta_n||^2+2\lambda \Phi_n^y > \nu r_n +(1-\nu)c_n\geq \nu r_n, \label{le31}
        \end{align}
        where $r_n$ and $c_n$ are given in (\ref{rn}). Since $y_n(\lambda)\in B[y_{n-1};r]$ for all $\lambda=\tau\mu^i$ with $i=0,1,2,\dots$, it follows from Assumption \ref{assump:2.2} (ii) and the inequality on the left-hand side of (\ref{t}) that 
        \begin{equation}
        \|\theta_n(\lambda)\|^2 \leq 2L_{xx}^2\|x_n-x_{n-1}\|^2 +2L_{xy}^2\|y_n(\lambda)-y_{n-1}\|^2 \quad\text{and}\quad \Phi_n^y\leq \frac{L_{yy}}{2}\|y_n(\lambda)-y_{n-1}\|^2.     \label{xyL}
        \end{equation}
       Together with (\ref{le31}) and $\lambda=\tau\mu^i$, we obtain
        \begin{align*}
            &\frac{2\tau \mu^i}{(1-a)\xi}\tau_{n-1}L_{xx}^2\|x_n-x_{n-1}\|^2+\frac{\tau \mu^i}{(1-a)\xi}(2\tau_{n-1}L_{xy}^2+(1-a)\xi L_{yy})\|y_n(\lambda)-y_{n-1}\|^2  \\
            & > \nu r_n = \nu \bigg(  \omega_1 \delta_{n-1} \| x_n - x_{n-1}\|^2 +\frac{b}{\beta}\| y_n - y_{n-1}\|^2 + \frac{1}{\beta}(1-b)(1+b)\|y_n - \bar{z}_n\|^2 \bigg)\\
            & \ge \nu \bigg(  \omega_1 \delta_{n-1} \| x_n - x_{n-1}\|^2 +\frac{b}{\beta}\| y_n - y_{n-1}\|^2 \bigg),
        \end{align*}  
        which implies
        $\frac{\tau \mu^i}{(1-a)\xi}2\tau_{n-1}L_{xx}^2 >  \nu \omega_1 \delta_{n-1}$
        or $\tau \mu^i(2\tau_{n-1}L_{xy}^2+(1-a)\xi L_{yy})/(1-a) \xi > \nu b/\beta$.
        This is impossible since $\mu^i\to 0$ as $i\to\infty$, which indicates that the linesearch procedure must terminate.
        
        (ii) By \eqref{lin} and the definition of $b_n$ given in \eqref{an1}, we obtain  
        $$b_n \ge (1-v)(r_n-c_n)=(1-v) \bigg( r_n-\frac{\eta}{|I_n|}\sum_{i\in I_n}r_i\bigg),$$ 
        where $I_n=\{n-1, n-2,...,max\{n-M,1\}\}$ and $M\ge 1 $ is an integer.
        Hence, for all $k\ge M+1$ we have
        \begin{equation} \label{le32}
			\begin{split}
                \sum_{n=M+1}^{k}b_n &\ge (1-v) \bigg(\sum_{n=M+1}^{k}r_n-\frac{\eta}{M}\sum_{n=M+1}^{k}\sum_{i=n-M}^{n-1}r_i \bigg)  \\
                &= (1-v)(1-\eta)\sum_{n=M+1}^{k}r_n+(1-v)\eta \bigg(\sum_{n=M+1}^{k}r_n-\frac{1}{M}\sum_{n=M+1}^{k}\sum_{i=n-M}^{n-1}r_i \bigg) \\
                &\ge (1-v)(1-\eta)\sum_{n=M+1}^{k}r_n-(1-v)\eta\sum_{n=1}^{M}r_n.
            \end{split}
		\end{equation}
        Since $J(x_n,y_n)\ge 0$, it follows  that $a_{n+1}\le a_n-b_n$ for all $n\ge 1$.
        It follows from  $r_n\ge 0$,  $\nu \in(0,1)$,  $\eta\in[0,1)$ and (\ref{le32}) that
         $$a_{k+1}\le a_{M+1}-\sum_{n=M+1}^{k}b_n
        \le a_{M+1}+(1-v)\eta\sum_{n=1}^{M}r_n$$ holds for all  $k\ge M+1$. Hence, the sequence  $\{a_n\}$ is bounded. By the definition of  $a_n$ in (\ref{an1}), we have
        $$  
        a\|x_{n} - x^*\|^2 + (1-a) \frac{\psi}{\psi-1}\|z_{n+1} - x^*\|^2+\frac{b}{\beta}\|y_{n-1} - y^*\|^2  
        + \frac{1}{\beta}\|\bar{z}_n - y^*\|^2 \le a_n.
        $$
        Hence, the boundedness of  $\{a_n\}$ implies that both sequences $\{x_n\}$, $\{z_n\}$ ,  $\{y_n\}$ and $\{\bar{z}_n\}$ are bounded. Since $ x_{n}^{md} =a x_{n-1} +(1-a)z_{n}$, $ y_{n}^{md} =b y_{n-1} +(1-a)\bar{z}_{n}$, then sequence$\{(x_n,y_n,z_n,\bar{z}_n,x_n^{md},y_n^{md} \}$ is bounded.
        
        (iii)It follows from Lemma \ref{le3} (ii) that the sequence $\{(x_n,y_n)\}$ is bounded. By Assumption \ref{assump:2.2} (ii) and the inequality on the left-hand side of (\ref{t}), we obtain 
        \begin{equation*}
            \|\theta_n\|^2 \leq 2L_{xx}^2\|x_n-x_{n-1}\|^2 +2L_{xy}^2\|y_n-y_{n-1}\|^2 \ \ \text{and}\ \ \Phi_n^y\leq \frac{L_{yy}}{2}\|y_n-y_{n-1}\|^2.
        \end{equation*}
        As a result, the linesearch condition (\ref{lin}) is satisfied provided that
        \begin{align}
           & \nu\omega_1 \delta_{n-1} -\frac{\tau \mu^i}{(1-a)\xi}2\tau_{n-1}L_{xx}^2 > 0
        \ \ \text{and} \ \ \nu b/\beta-\tau \mu^i(2\tau_{n-1}L_{xy}^2+(1-a)\xi L_{yy})/ (1-a)\xi > 0. \label{le33}
        \end{align}
        By $\delta_{n-1}=\tau_{n-1}/\tau_{n-2} \le \varphi$,    $\tau_j\le\tau_{\max}$ and the definition of $\underset {-} {\tau}$ in (\ref{tau-}), it is not difficult to verify that the conditions in (\ref{le33}) are indeed satisfied when $\tau_n \le \underline\tau$.
        
        (iv) We prove it by mathematical induction. We suppose that  $\tau_0\ge\underline\tau$ and $\tau_{\max}\ge \mu \underline\tau $. Assume that $\tau_{n-1}\ge \mu \underline\tau$. To show that the sequence $\{\tau_n\}$ is strictly separated from $0$, we only need to show that $\tau_n\ge \mu \underline\tau$. Since $\tau=\min\{\varphi \tau_{n-1},\tau_{\max}\}$ and  $\varphi>1$, we have $\tau\ge\min\{\tau_{n-1},\tau_{\max}\}\ge \mu \underline\tau$. Recall that $\tau_n=\tau\mu^i$ for some nonnegative integer $i$. If $i=0$, then $\tau_n=\tau\ge \mu \underline\tau$. If $i>0$, then $\hat\tau:=\tau\mu^{i-1}$ must necessarily violate condition (\ref{lin}). It then follows from  Lemma \ref{le3} (ii) that $\hat\tau>\underline\tau$ must hold. Hence, $\tau_n=\mu\hat\tau>\mu\underline\tau >0$.  In addition, it is obvious that $\delta_n=\tau_n /\tau_{n-1}\ge \mu\underline\tau /\tau_{\max}>0$ for all $n$. 
    \end{proof}


    \subsection{Global convergence and sublinear convergence rate.}
        Based on Lemma \ref{le3}, we now establish global  convergence and $O(1/N)$ ergodic sublinear convergence rate of Algorithm \ref{Alg1}.

    \begin{thm}
        \textbf{(Global convergence.)} The sequence $\{(x_n,y_n)\}$ generated by NPDAL-n converges to a solution of the saddle-point problem(\ref{p}).
    \end{thm}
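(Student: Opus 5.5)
The proof follows the standard Fejér-type argument built on the Lyapunov descent of Lemma \ref{le2}. Step one is summability. The argument in the proof of Lemma \ref{le3}(ii) shows that $\{a_n\}$ is bounded below by $0$ and that $a_{k+1}\le a_{M+1}-\sum_{n=M+1}^k b_n$. Combined with \eqref{le32}, this gives
\[
(1-\nu)(1-\eta)\sum_{n} r_n<\infty .
\]
Hence $r_n\to 0$. By Lemma \ref{le3}(iv), $\delta_{n-1}\ge\mu\underline\tau/\tau_{\max}>0$, so
\[
\|x_n-x_{n-1}\|\to0,\qquad \|y_n-y_{n-1}\|\to0\ (\text{when } b>0),\qquad \|y_n-\bar z_n\|\to 0 .
\]
From $\|y_n-\bar z_n\|\to0$ and $\bar z_{n+1}-\bar z_n=(1-b)(y_n-\bar z_n)$ we also get $\|y_n-y_{n-1}\|\to0$ when $b=0$. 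For the primal auxiliary sequence, $x_n-z_n=\psi(x_n-z_{n+1})$ makes $z_{n+1}-z_n$ a multiple of $x_n-z_n$. To show $\|x_n-z_n\|\to0$ I would go back to \eqref{eq13}: the Lemma \ref{fact2.4} step discards part of the terms $\psi\delta_n\|z_{n+1}-x_n\|^2$ and $(1+1/\psi-\psi\delta_n)\|x_{n+1}-z_{n+1}\|^2$, and I would keep a small fraction of them, using slack from $\omega_1>0$. Failing that, I would use $z_{n+1}-x_n=\frac1\psi(z_n-x_n)$. This gives a contraction $\|z_{n+1}-x_n\|\le\frac1\psi\|z_n-x_{n-1}\|+\frac1\psi\|x_n-x_{n-1}\|$, and since $\psi>1$ and $\|x_n-x_{n-1}\|\to0$, it yields $\|z_{n+1}-x_n\|\to 0$. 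Lemma \ref{fact2.3} gives the summable version of the same statement if it is needed. Consequently $x_n^{md}-x_n\to0$ and $y_n^{md}-y_n\to0$.

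Step two is identifying cluster points. By Lemma \ref{le3}(ii) the iterates are bounded, so take a subsequence $(x_{n_k},y_{n_k})\to(\hat x,\hat y)$; then $x_{n_k+1}\to\hat x$ and $y_{n_k+1}\to\hat y$ as well. Write the optimality conditions of the prox steps \eqref{x1} and \eqref{y1} in the form of Lemma \ref{fact2.2}: for every $x$,
\[
\tau_{n}\big(g(x)-g(x_{n+1})\big)\ge\langle x_{n+1}^{md}-x_{n+1}-\tau_n\nabla_x\Phi(x_n,y_n),\,x-x_{n+1}\rangle ,
\]
and similarly for $f^*$. Divide by $\tau_n$, which satisfies $\mu\underline\tau\le\tau_n\le\tau_{\max}$. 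Then pass to the limit using lower semicontinuity of $g,f^*$, continuity of $\nabla\Phi$ on the bounded region (Assumption \ref{assump:2.2}), and the vanishing differences from step one. This gives $-\nabla_x\Phi(\hat x,\hat y)\in\partial g(\hat x)$ and $\nabla_y\Phi(\hat x,\hat y)\in\partial f^*(\hat y)$, so $(\hat x,\hat y)\in\Omega$.

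Step three is convergence of the whole sequence. Since $(x^*,y^*)\in\Omega$ in Lemma \ref{le2} was arbitrary, apply the descent with $(x^*,y^*)=(\hat x,\hat y)$. Define $\tilde a_n=a_n+(1-\nu)\eta\sum_{i\ge n-M}^{n-1} r_i$-type corrections, or simply use that $a_{n+1}\le a_n-b_n$ with $\sum b_n$ bounded below by a convergent series. Either way, $\lim a_n$ exists, the nonmonotone term being handled via summability of $r_n$. Along the subsequence, $a_{n_k}\to0$, because every term of $a_n$ involves $x_{n_k}-\hat x$, $z_{n_k+1}-\hat x$, $y_{n_k-1}-\hat y$, $\bar z_{n_k}-\hat y$ or increments, all of which tend to zero by step one. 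Hence $a_n\to0$, and in particular $x_n\to\hat x$ and $\bar z_n\to\hat y$, so $y_n\to\hat y$.

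The main obstacle is step three's existence of $\lim a_n$ under the nonmonotone linesearch. The plan is to write $a_{n+1}\le a_n-(1-\nu)r_n+(1-\nu)c_n$ and use $\sum c_n\le\eta\sum r_n<\infty$, so that $a_{n+1}\le a_n+\epsilon_n$ with $\epsilon_n$ summable, which is a standard quasi-Fejér argument. A secondary concern is ensuring $\|x_n-z_n\|\to0$, which the contraction argument in step one handles.
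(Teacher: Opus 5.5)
Your proposal is correct and follows the paper's proof essentially step for step. Both arguments use four steps: summability of $r_n$ from Lemma \ref{le2} and \eqref{le32}; a contraction argument giving $z_n-x_n\to 0$; identification of a cluster point in $\Omega$ by passing to the limit in the prox inequalities; and re-centering $a_n$ at that cluster point to get $a_{n_k}\to 0$ and then $a_n\to 0$.

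The differences are only cosmetic. You run the recursion $\|z_{n+1}-x_n\|\le\frac1\psi\|z_n-x_{n-1}\|+\frac1\psi\|x_n-x_{n-1}\|$ in norm form, where the paper uses a squared form together with Lemma \ref{fact2.3}. You also get $\lim a_n$ by a quasi-Fej\'er argument from $a_{n+1}\le a_n+(1-\nu)c_n$ with $\sum c_n<\infty$. Here the constant is really $\eta$ times a harmonic number rather than $\eta$, which does not affect the conclusion. The paper instead uses the direct tail bound $a_{\ell+1}\le a_{n_k}+(1-\nu)\eta\sum_{i=n_k-M}^{n_k-1}r_i$.
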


    \begin{proof}
        It follows from Lemma \ref{le2} that $a_{n+1}\le a_n - b_n$. Together with \eqref{le32} and $a_n \ge 0$ for any $n \ge 1$, we obtain
        \begin{align}
             (1-v)(1-\eta)\sum_{n=M+1}^{k}r_n \le a_{M+1}+(1-v)\eta\sum_{n=1}^{M}r_n. \label{T1}
        \end{align}
        From Lemma \ref{le3} (iv), we know $\delta_n\ge \underline{\delta}:=\mu\underline{\tau}/\tau_{\text{max}}>0$ holds for all $n\ge1$. Combining this with the definition of $r_n$, $v\in(0,1)$, $\eta\in[0,1)$ and (\ref{T1}), we deduce that 
        $$\sum_{n=1}^{\infty}r_n<\infty,\ \
        \sum_{n=1}^{\infty}\|x_n-x_{n-1}\|^{2}<\infty,
        \ \ \sum_{n=1}^{\infty}\| y_n-y_{n-1}\|^{2}<\infty,\ \ \text{and}\ \ \sum_{n=1}^{\infty}\|y_n - \bar{z}_n\|^2<\infty.$$ 
        which yield
        $$\lim_{n\to\infty} r_n = \lim_{n\to\infty}\|x_n-x_{n-1}\|=\lim_{n\to\infty}\| y_n-y_{n-1}\| = \lim_{n\to\infty}\|y_n - \bar{z}_n\| = 0.$$ 
        Denote $u_n:=z_{n}-x_{n}$. It is not difficult from $z_{n+1}=\frac{\psi -1}{\psi}x_n+\frac{1}{\psi}z_n$ that
        $$\psi (x_n-x_{n+1})=\psi(z_{n+1}-x_{n+1})-(z_{n}-x_{n})=\psi u_{n+1}-u_n $$ and $$\frac{\psi}{\psi-1}\|u_{n+1}\|^2-\frac{1}{\psi-1}\|u_{n}\|^2+\frac{\psi}{(\psi-1)^2}\|u_{n+1}-u_{n}\|^2=\frac{\psi^2}{(\psi-1)^2}\|x_n-x_{n+1}\|^2,$$
        which implies that $\|u_{n+1}\|^2\le \frac{1}{\psi}\|u_{n}\|^2+\frac{\psi}{\psi-1)}\|x_n-x_{n+1}\|^2$. 
        It then follows from $\sum_{n=1}^{\infty}\|x_{n}-x_{n+1}\|^{2}<\infty$ and Lemma \ref{fact2.3} that $\sum_{n=1}^{\infty}\|u_{n}\|^{2}<\infty$,  which yields that $$\lim_{n\rightarrow\infty}u_{n}=\lim_{n\rightarrow\infty}(z_{n}-x_{n})=0.$$
        
        Combining with the definition of $x_n^{md}$, we obtain  that $\lim_{n\to\infty}(x_n^{md}-x_{n-1})=\lim_{n\to\infty}\frac{1-a}{\psi}(z_{n-1}-x_{n-1})=0$. Similarly, we can get $\lim_{n\to\infty}(\bar{z}_n-y_n)=0$ and $\lim_{n\to\infty}(y_n^{md}-y_{n-1})=0$.
        Since the sequence $\{(x_n,y_n)\}$ is bounded, there exist $(x^{\star},y^{\star})$ and a subsequence $\{n_k:k\ge 1\}\subseteq\{n:n\ge 1\}$ such that $\lim_{k\to\infty}x_{n_k}=x^{\star}$ and $\lim_{k\to\infty}y_{n_k}=y^{\star}.$
         It also follows that
        \begin{align*}
            & \lim_{k\to\infty}x_{n_k+1}
            = \lim_{k\to\infty}x_{n_k}
            =\lim_{k\to\infty}x_{n_k+1}^{md}
            = \lim_{k\to\infty}z_{n_k}
            = x^{\star},\\
            &\lim_{k\to\infty}\bar{z}_{n_k}
            =\lim_{k\to\infty}y_{n_k}^{md}
            =\lim_{k\to\infty}y_{n_k}
            = \lim_{k\to\infty}y_{n_k-1}
            = y^{\star}.
        \end{align*}
        Similar to (\ref{t1}) and (\ref{t3}), for any $(x,y) \in \mathbf{dom}(g) \times \mathbf{dom}(f^{*})$,  the following inequalities hold:
        \begin{equation} \label{T2}
			\begin{split}
                \tau_{n_k}\bigl(g(x_{n_k+1})-g(x)\bigr)&\le \langle x_{n_k+1}-x_{n_k+1}^{md}+\tau_{n_k}\nabla_{\!x}\Phi(x_{n_k},y_{n_k}),\,x-x_{n_k+1}\rangle, \\
                \tau_{n_k}\bigl(f^{*}(y_{n_k})-f^{*}(y)\bigr) &\le \langle y_{n_k}-y_{n_k}^{md}-\tau_{n_k}\nabla_{\!y}\Phi(x_{n_k},y_{n_k-1}),\,y-y_{n_k}\rangle.
            \end{split}
		\end{equation}
        Then, dividing $\tau_{n_k}$ from both sides of (\ref{T2}), using the fact that    both $g$ and $f^{*}$ are closed (and thus lower semicontinuous), and letting $k\to\infty$, we obtain
        \begin{equation}
            g(x^{\star})-g(x)\le\langle\nabla_{\!x}\Phi(x^{\star},y^{\star}),\,x-x^{\star}\rangle
            \quad\text{and}\quad
            f^{*}(y^\star)-f^{*}(y)\le -\langle\nabla_{\!y}\Phi(x^{\star},y^{\star}),\,y-y^{\star}\rangle. \label{T3}
        \end{equation}
        Since (\ref{T3}) holds for any $(x,y)\in \mathbf{dom}(g)\times \mathbf{dom}(f^{*})$, we have
        $$
        -\nabla_{\!x}\Phi(x^{\star}, y^{\star}) \in \partial g(x^{\star})
        \quad\text{and}\quad
        \nabla_{\!y} \Phi(x^{\star}, y^{\star}) \in \partial f^{*},
        $$ 
        which implies that $(x^{\star},y^{\star})$ is a solution of the saddle-point problem (\ref{p}).
         
        Recall that $J(\cdot,\cdot)$ and $a_n$, defined in \eqref{J} and \eqref{an1} respectively, depend on an arbitrarily fixed solution pair $(x^{*},y^{*})$. Since $(x^{\star},y^{\star})$ is also an exact solution to the saddle-point problem \eqref{p}, we can substitute $(x^{*},y^{*})$ with $(x^{\star},y^{\star})$ as the reference point. Consequently, we obtain $\lim_{k\to\infty}a_{n_k}=0$.

        Following the same derivation as in \eqref{le32}, for all $\ell \ge n_k\ge M+1$, we establish
        $$
        \sum_{i=n_k}^{\ell}b_i \ge (1-v)(1-\eta)\sum_{i=n_k}^{\ell}r_i -(1-v)\eta\sum_{i=n_k-M}^{n_k-1}r_i.
        $$
        
        Given that $0\le a_{n+1}\le a_n-b_n$ and $r_n\ge 0$ for all $n\ge 1$, it algebraically follows that
        $$
        0\le a_{\ell+1}\le a_{n_k}+(1-v)\eta\sum_{i=n_k-M}^{n_k-1}r_i, \quad\forall\,\ell\ge n_k\ge M+1.
        $$
        
        Since $\lim_{k\to\infty}a_{n_k}=0$ and $\lim_{n\to\infty}r_n=0$, taking the limit on both sides yields $\lim_{n\to\infty}a_n=0$. Therefore, based on the definitions of $x_n^{md}$, $y_n^{md}$, and $a_n$ in \eqref{an1}, we can rigorously conclude that
        $$
        \lim_{n\to\infty}(x_n,z_n,x_n^{md},y_n,\bar{z}_n,y_n^{md})=(x^{\star},x^{\star},x^{\star},y^{\star},y^{\star},y^{\star}).
        $$
        This completes the proof.\\
    \end{proof}    
        We next establish the ergodic sublinear convergence rate of Algorithm \ref{Alg1} using the primal-dual gap function $J(\cdot,\cdot)$.

    \begin{thm}
        \textbf{(Sublinear convergence rate.)} Suppose the sequence $\{(x_n,y_n)\}$ is generated by Algorithm \ref{Alg1}. Then there exists a constant  $C_1>0$ such that, for any $N\ge 1$,  
        $$
        J(\hat x_N,\hat y_N)\le\frac{C_1}{N},
        $$ 
        where
        $$
        \hat x_N=\frac{1}{s_N}\sum_{n=1}^{N}\tau_nx_n,
        \quad \quad
        \hat y_N=\frac{1}{s_N}\sum_{n=1}^{N}\tau_n y_n,
        \quad \quad
        s_N=\sum_{n=1}^{N}\tau_n.
        $$ 
    \end{thm}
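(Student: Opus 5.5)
The plan is to telescope the one-step energy inequality of Lemma \ref{le2}, control the negative terms $b_n$ exactly as in \eqref{le32}, and then use the joint convexity of $J$ (Jensen) together with the uniform lower bound on the step sizes from Lemma \ref{le3}(iv).

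\textbf{Step 1: telescope.} Summing $2\tau_n J(x_n,y_n)+a_{n+1}\le a_n-b_n$ over $n=1,\dots,N$ gives
\[
2\sum_{n=1}^N \tau_n J(x_n,y_n) + a_{N+1} \le a_1 - \sum_{n=1}^N b_n .
\]

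\textbf{Step 2: bound $-\sum b_n$ uniformly in $N$.} The linesearch condition \eqref{lin} gives $b_n \ge (1-\nu)(r_n-c_n)$. With $I_n$ and $c_n$ as in \eqref{rn}, the double sum $\sum_{n=1}^N c_n$ counts each $r_i$ with total weight at most $\eta$. For small $n$, where $|I_n|=n-1<M$, I will check this directly: the weights $1/|I_n|$ still sum to at most about $\eta\log M$, or I simply bound those first $M$ terms by a constant. This yields
\[
\sum_{n=1}^N b_n \ge (1-\nu)(1-\eta)\sum_{n=1}^N r_n - C_0 \ge -C_0,
\]
with $C_0:=(1-\nu)\eta\sum_{n=1}^{M} r_n$, or a similar finite constant. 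Note that $\sum r_n<\infty$ was already established in the proof of the global convergence theorem, so in any case $-\sum_{n=1}^N b_n\le C_0'$ for some constant $C_0'$ independent of $N$. Dropping $a_{N+1}\ge 0$ then gives
\[
\sum_{n=1}^N \tau_n J(x_n,y_n)\le \tfrac12 (a_1+C_0').
\]

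\textbf{Step 3: Jensen and the step-size bound.} $J$ is jointly convex and $(\hat x_N,\hat y_N)$ is the $\tau_n/s_N$-weighted average of the iterates, so
\[
s_N J(\hat x_N,\hat y_N)\le \sum_{n=1}^N \tau_n J(x_n,y_n).
\]
By Lemma \ref{le3}(iv), $\tau_n\ge\mu\underline{\tau}$, hence $s_N\ge N\mu\underline{\tau}$. Combining, $J(\hat x_N,\hat y_N)\le C_1/N$ with $C_1=(a_1+C_0')/(2\mu\underline{\tau})$.

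\textbf{Main obstacle.} The delicate point is Step 2, namely handling the nonmonotone averaging term $c_n$ for the early indices $n\le M$, where $|I_n|<M$. Here I will rely either on the finiteness of $\sum r_n$ from the convergence theorem, or on direct counting, which shows each $r_i$ appears with total weight bounded by $\eta(1+\log M)$ and so contributes only a constant. A second point to check is that $\underline\tau$ is well defined. This holds because the Lipschitz constants in \eqref{tau-} are taken on the bounded set containing all iterates given by Lemma \ref{le3}(ii), and it requires the standing hypotheses $\tau_0\ge\underline\tau$ and $\tau_{\max}\ge\mu\underline\tau$ of Lemma \ref{le3}(iv). If these fail, one still has $\inf_n\tau_n>0$ by the same induction started from $\min\{\tau_0,\mu\underline\tau\}$, which suffices.
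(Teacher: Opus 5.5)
Your proposal is correct and is essentially the paper's proof: telescope Lemma~\ref{le2}, bound $-\sum_{n=1}^N b_n$ uniformly in $N$ via \eqref{le32}, drop $a_{N+1}\ge 0$, and finish with Jensen and $s_N\ge\mu\underline{\tau}N$ from Lemma~\ref{le3}(iv); the paper handles the early indices $n\le M$ with your fallback option, adding $\sum_{n=1}^M|b_n|$ to the constant. The only slip is your explicit $C_0=(1-\nu)\eta\sum_{n=1}^M r_n$, which does not follow from the counting: $r_1$ enters $c_2,\dots,c_{M+1}$ with total weight $\eta(1+\frac12+\cdots+\frac1M)$, and this exceeds the $2\eta$ your bound allots once $M\ge 4$; however, your other fallbacks (finiteness of $\sum r_n$, or the $\sum_{n=1}^M|b_n|$ route) still give a valid $N$-independent constant, so the argument stands.
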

    
    \begin{proof}
        It follows from Lemma \ref{le2} that $2\tau_n J(x_n,y_n)\le a_n-a_{n+1}-b_n$ for all  $n\ge 1$. Summing both sides   from $n=1$ to $N$ and applying  (\ref{le32}), we obtain
        
        $$
        2\sum_{n=1}^{N}\tau_n J(x_n,y_n) \le a_1-a_{N+1}-\sum_{n=1}^N b_n\le a_1+C, $$ 
         where 
        $C=\sum_{n=1}^M |b_n|+(1-v)\eta\sum_{i=1}^{M}r_i.$
       
        Since $J(x,y)$ is jointly convex in $(x,y)$, it follows from the definitions of $\hat x_N$, $\hat y_N$ and Jensen's inequality that
        $$
        J(\hat x_N, \hat y_N)
        \le\frac{1}{s_N}\sum_{n=1}^{N}\tau_n J(x_n,y_n)
        \le\frac{a_1+C}{2s_N}.
        $$ 
        By Lemma \ref{le3}~(iv), we have $\tau_n\ge \mu \underline\tau>0$, and then
        $$
        s_N=\sum_{n=1}^{N}\tau_n\ge \mu \underline\tau N.
        $$ 
       Therefore, with  $C_1=(a_1+C)/(2 \mu \underline{\tau} )>0$, the proof is complete.
    \end{proof}
    
\section{Accelerated Algorithm for special Convex-Concave Saddle-Point Problems}\label{section4}
    Let $g$ and $f$ be the same functions as in (\ref{p}), $H:\operatorname{dom}(g)\rightarrow\operatorname{dom}(f)$ be nonlinear and continuously differentiable, and $h:\mathbb{R}^{q}\rightarrow \mathbb{R}$ be convex and $L_{h}$-smooth with some constant $L_{h}>0$.

  In the section, by adaptively tuning the parameter $\beta$ in Algorithm \ref{Alg1}, we present an accelerated variant  for solving    the following  saddle-point problem  
        \begin{align}
        \min_{x\in \mathbb{R}^{q}}\max_{y\in \mathbb{R}^{p}}\left\{\mathcal{L}(x,y):=g(x)+h(x)+\langle H(x),y\rangle -f^{*}(y)  \right\},  \label{pp1}
    \end{align}
    where $g$ is further  assumed to be  strongly convex  with modulus $\gamma > 0$, i.e.,
    \[
    g(x) \geq g(\tilde{x}) + \langle \zeta, x - \tilde{x} \rangle + \frac{\gamma}{2} \|x - \tilde{x}\|^2,\quad
    \forall x,\tilde{x} \in \mathbf{\mathrm{dom}}(g),\ \zeta \in \partial g(\tilde{x}).
    \]

    Let  
    \begin{equation} \label{phi1}
        \Phi(x,y):=h(x)+\langle H(x),y\rangle. 
    \end{equation}
    Then, formulation \eqref{pp1} is a special case of (\ref{p}), where $\Phi(x,y)$ is linear with respect to the dual variable $y$. By utilizing the Legendre-Fenchel conjugate $f^{**}=f$, 
    the following nonlinear compositional convex optimization problem 
    \begin{align}
        \min_{x\in \mathbb{R}^{q}}\left\{P(x):=g(x)+h(x)+f(H(x))\right\} \label{pp}
    \end{align}
    can be naturally reformulated as problem \eqref{pp1}. Consequently, the proposed accelerated variant of Algorithm~\ref{Alg1} can be directly applied to solve the nonlinear compositional convex optimization problem ~\eqref{pp}.
 
    Assume that Assumptions \ref{assump:2.1}--\ref{assump:2.3} hold for (\ref{pp1}). Moreover, we assume that, for any $y\in \mathrm{dom}(f^{*})$, $\langle H(x),y\rangle$ is convex in $x$. Under this assumption, it follows that $f(H(x))=\max_{y\in \mathbb{R}^{p}}\{\langle H(x),y\rangle-f^{*}(y)\}$ is convex in $x$ as well.
    Since $\Phi(x,y)$ is linear in $y$, the Lipschitz constant $L_{yy}$ defined in Assumption \ref{assump:2.2} (ii) can set to be $0$. Define $l(y):=\max_{x\in \mathbb{R}^{q}}\{\langle H(x),y\rangle-g(x)-h(x)\}$ for $y\in \mathbb{R}^{p}$.

    It follows  from  \eqref{phi1} that $\Phi(x_n, y_n) = h(x_n) + \langle H(x_n), y_n \rangle$, and $\Phi_n^y = h(x_n) + \langle H(x_n), y_{n-1} \rangle + \langle H(x_n), y_n - y_{n-1} \rangle - h(x_n) - \langle H(x_n), y_n \rangle = 0$.

    \subsection{The Proposed Accelerated Algorithm}
    \begin{algorithm}[H] 
    \caption{ aNPDAL-n: Accelerated NPDAL-n for problem \eqref{pp1} when $g$ is strongly convex} 
    \begin{algorithmic} 
        \State \textbf{Initialization:} Choose $\psi\in(1,1+\sqrt{3}),\ (\xi,\xi_1,\varphi,\omega_1) \in \Theta_\psi$, $\tau_{max}>0$, $\mu \in (0,1),\ a\in (0,1)$, and $b\in [0,1)$. Choose $x_0 \in dom_(g),\ y_0 \in dom_(f^*) $, $\beta_0>0$, $\delta_0 > 0$, and $\tau_0 \in (0,\tau_{max}]$. Set $z_0=x_0$ and $\bar{z}_0=y_0$,  and $n=1$.
        \State \textbf{Main Iteration:}
            \State Step 1. Compute
            \begin{gather}
                \kappa_{n-1}=\omega_1 \delta_{n-1}+(1-a)\gamma \tau_{n-1},\quad 
                \rho_n=\frac{\psi-\varphi}{\psi +\gamma\varphi\tau_{n-1}} ,\quad 
                \beta_n=(1+\gamma\rho_n\tau_{n-1})\beta_{n-1}. \label{beta}
            \end{gather}
            \State Step 2. Compute
            \begin{gather}
                z_{n} =\frac{\psi-1}{\psi} x_{n-1} +\frac{1}{\psi} z_{n-1},\quad 
                x_{n}^{md} =a x_{n-1} +(1-a)z_{n}, \label{zx11}\\
                x_{n}=\operatorname{Prox}_{\tau_{n-1} g}\!\left(x_n^{md}-\tau_{n-1} \nabla_x \Phi(x_{n-1},y_{n-1})\right).
            \end{gather}   
            \State Step 3. Compute 
            \begin{gather}
                \bar{z}_n =  (1-b) y_{n-1}+b \bar{z}_{n-1}, \quad
                y_{n}^{md} =b y_{n-1} + (1-b) \bar{z}_n.
            \end{gather}
        \State Step 4. \textbf{Linesearch.} Set $\tau =min\{\varphi\tau_{n-1},\tau_{max} \}$ and   perform 
            \begin{align}        
                y_{n}=\text{Prox}_{\beta_n\tau_n f^*}(y_{n}^{md}+\beta_n\tau_n \nabla_y \Phi(x_{n},y_{n-1})),\label{y1b}
            \end{align}
        \State  \hspace{3.2em}  where   $\tau_{n} :=\tau\mu^i  \;\text{for} \; i=0,1,\cdots $     until the following condition    is satisfied:
            \begin{align}
            &\ \frac{\tau_n \tau_{n-1}}{(1-a)\xi}||\theta_n||^2\leq \frac{\kappa_{n-1}\beta_{n-1}}{\beta_n} \| x_n - x_{n-1}\|^2+\frac{b}{\beta_n}\| y_n - y_{n-1}\|^2 + \frac{1}{\beta_n}(1-b)(1+b)\|y_n - \bar{z}_n\|^2  \label{alin}.
            \end{align}    
        \State Step 5. Set $\delta_n=\tau_n/\tau_{n-1}$, $n\leftarrow n+1$ and go to Step 1.
        \State \textbf{End} 
    \end{algorithmic}  \label{Alg2}
    \end{algorithm} 

    \begin{remark}
        By following the same assumptions as in \cite{ref25}, for $\Phi(x, y) = h(x) + \langle H(x), y \rangle$, we can also prove the convergence and convergence rate of Algorithm \ref{Alg1} using the proof method in Section \ref{section4} of \cite{ref25}, which will not be further elaborated here.
    \end{remark}

    \subsection{Some basic properties of Algorithm \ref{Alg2} }
    \begin{lemma} \label{ale1}
    For  $\theta_n$, $\Phi_y$ and $J(\cdot,\cdot)$ defined in (\ref{tt1}), (\ref{tt2}) and (\ref{J}), respectively, we have
    \begin{equation} \label{at0}
        \begin{split}
            \tau_n J(x_n, y_n) &\leq \langle x_{n+1} - x_{n+1}^{md}, x^* - x_{n+1} \rangle + \delta_n \langle x_n - x_{n}^{md}, x_{n+1} - x_n \rangle+ \tau_n \langle \theta_n,x_n-x_{n+1} \rangle  \\
            &+\tau_n \Phi_n^y+ \frac{1}{\beta_n} \langle y_n - y_{n}^{md}, y^* - y_{n} \rangle -\frac{\gamma\tau_n}{2}(\|x_{n+1}-x_{n}\|^2 +\|x_{n+1}-x^*\|^2). 
        \end{split}
    \end{equation}
    \end{lemma}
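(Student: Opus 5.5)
The plan is to repeat the proof of Lemma~\ref{le1} line by line. The only new ingredient is the strong convexity of $g$ with modulus $\gamma$, which enters through Lemma~\ref{fact2.2} with $\gamma>0$ instead of $\gamma=0$. The dual step is unchanged except that $\beta$ is replaced by $\beta_n$.

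First, we apply Lemma~\ref{fact2.2} to the primal update that produces $x_{n+1}$, namely $x_{n+1}=\mathrm{Prox}_{\tau_n g}(x_{n+1}^{md}-\tau_n\nabla_x\Phi(x_n,y_n))$, at the test point $x^*$. Multiplying by $\tau_n$ gives the analogue of \eqref{t1} with the extra term $-\frac{\gamma\tau_n}{2}\|x^*-x_{n+1}\|^2$ on the right. Next, we apply Lemma~\ref{fact2.2} to the update producing $x_n$ with step $\tau_{n-1}$, at the test point $x_{n+1}$. This gives \eqref{t2} with the extra term $-\frac{\gamma\tau_{n-1}}{2}\|x_{n+1}-x_n\|^2$. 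Multiplying by $\delta_n=\tau_n/\tau_{n-1}$, as in \eqref{t4}, turns this term into exactly $-\frac{\gamma\tau_n}{2}\|x_{n+1}-x_n\|^2$. Third, we apply Lemma~\ref{fact2.2} (with $\gamma=0$) to the dual update \eqref{y1b}, whose prox parameter is $\beta_n\tau_n$. Multiplying by $\tau_n$ gives \eqref{t3} with $\frac{1}{\beta_n}\langle y_n-y_n^{md},y^*-y_n\rangle$ in place of the $\beta$ term.

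Summing the three inequalities, we treat the $\Phi$ terms exactly as before. We use the concavity bound \eqref{t} (right-hand side) to control $-\langle\nabla_y\Phi(x_n,y_{n-1}),y^*-y_n\rangle$. We then collect the remaining gradient terms into $G_n$, and the convexity of $\Phi(\cdot,y_n)$ gives $G_n\le\langle\theta_n,x_n-x_{n+1}\rangle+\Phi_n^y$. The left-hand side becomes $\tau_n\big(g(x_{n+1})-g(x^*)+g(x_n)-g(x_{n+1})+f^*(y_n)-f^*(y^*)\big)+\tau_n(\Phi\text{ terms})=\tau_n J(x_n,y_n)$, exactly as in \eqref{t5}. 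Together with the two strong-convexity terms, this yields \eqref{at0}. In the present setting $\Phi_n^y=0$, but it can be kept in the statement harmlessly.

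The main point requiring care is bookkeeping rather than a genuine obstacle. The second strong-convexity term must be scaled by $\delta_n$ so that its coefficient becomes $\tau_n$ rather than $\tau_{n-1}$. The first one must be checked to have coefficient $\tau_n$, because the prox producing $x_{n+1}$ uses step $\tau_n$. One must also confirm that $J$ here is built from $g$ itself (not $g+h$ separately), with $h$ absorbed into $\Phi$ via \eqref{phi1}, so that the convexity argument for $G_n$ applies unchanged.
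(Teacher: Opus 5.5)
Your proposal is correct and matches the paper's proof essentially verbatim. The paper likewise applies Lemma~\ref{fact2.2} with modulus $\gamma$ to the two primal prox steps, scaling the second by $\delta_n$ so that both strong-convexity terms carry the coefficient $\frac{\gamma\tau_n}{2}$. It then defers the dual step and the $G_n$ bookkeeping to the argument of Lemma~\ref{le1}, with $\beta$ replaced by $\beta_n$.
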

    
    \begin{proof}
    Since  $g$ is  strongly convex,  it follows   from (\ref{fact2.2}) that 
    \begin{align}
        \tau_n (g(x_{n+1}) - g(x^*))  &\leq \langle x_{n+1} - x_{n+1}^{md}+ \tau_n \nabla_x \Phi(x_n, y_n), x^* - x_{n+1}\rangle -\frac{\gamma\tau_n}{2}\|x_{n+1}-x^*\|^2, \\
        \tau_{n} (g(x_{n}) - g(x_{n+1}))  &\leq \langle \delta_{n}(x_{n} - x_n^{md}) + \tau_{n} \nabla_x \Phi(x_{n-1}, y_{n-1}), x_{n+1}- x_{n} \rangle -\frac{\gamma\tau_{n}}{2}\|x_{n+1}-x_{n}\|^2.   
    \end{align}
    The proof of the remaining part is entirely analogous to Lemma \ref{le1} and is thus omitted for brevity.
    \end{proof}
    
    \begin{lemma}\label{ale2}
        For all $n \geq 1$, we have
        \begin{align}
            2\tau_n \beta_n J(x_n, y_n) +\beta_{n+1}A_{n+1} \leq \beta_n A_n-\beta_nB_n,  \label{AB}
        \end{align} 
        where 
        \begin{align}
            A_n=&a\|x_{n}- x^*\|^2+\frac{(1-a)\psi+(1-a)\gamma \tau_n}{\psi-1}\| z_{n+1}-x^* \| ^2+(a  +\frac{\kappa_{n-1}\beta_{n-1}}{\beta_n})\|x_n-x_{n-1}\|^2 \nonumber \\
            & +\frac{b}{\beta_n}\|y_{n-1} - y^*\|^2 + \frac{1}{\beta_n}\|\bar{z}_n - y^*\|^2,\label{An1} \\
            B_n=&\frac{b}{\beta_n} \| y_n - y_{n-1}\|^2+ \frac{\kappa_{n-1}\beta_{n-1}}{\beta_n}\|x_n-x_{n-1}\|^2 + \frac{1}{\beta_n}(1-b)(1+b)\|y_n - \bar{z}_n\|^2 \nonumber \\ &- \frac{\tau_n\tau_{n-1}}{(1-a)\xi}\| \theta_n \|^2. \label{Bn1} 
        \end{align}
    \end{lemma}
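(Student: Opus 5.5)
We propose to redo the argument of Lemma \ref{le2}, now starting from Lemma \ref{ale1} instead of Lemma \ref{le1}, keeping the two strong-convexity terms $-\frac{\gamma\tau_n}{2}(\|x_{n+1}-x_n\|^2+\|x_{n+1}-x^*\|^2)$, and multiplying the final inequality by $\beta_n$. Since $\Phi$ is linear in $y$, we have $\Phi_n^y=0$, so that term drops out.

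\textbf{Step 1 (primal part).} We use $x_n-z_n=\psi(x_n-z_{n+1})$ to split $x_{n+1}-x_{n+1}^{md}$ and $x_n-x_n^{md}$ into their $a$- and $(1-a)$-parts, exactly as in \eqref{eq11}. Then we:
\begin{itemize}
\item[(a)] bound the cross term $2a\delta_n\langle x_n-x_{n-1},x_{n+1}-x_n\rangle$ by Young's inequality with parameter $\xi_1$, as in \eqref{eqYoung1};
\item[(b)] expand every inner product with \eqref{identity_inner};
\item[(c)] rewrite $\|x_{n+1}-x^*\|^2$ through the $z$-recursion identity
\[
\|x_{n+1}-x^*\|^2=\tfrac{\psi}{\psi-1}\|z_{n+2}-x^*\|^2-\tfrac{1}{\psi-1}\|z_{n+1}-x^*\|^2+\tfrac1\psi\|x_{n+1}-z_{n+1}\|^2 .
\]
\end{itemize}
The new ingredient is the term $-\gamma\tau_n\|x_{n+1}-x^*\|^2$ (after doubling). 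We would split it as $(1-a)\gamma\tau_n\|x_{n+1}-x^*\|^2+a\gamma\tau_n\|x_{n+1}-x^*\|^2$, drop the $a$-part since it is nonnegative, and apply the $z$-identity to the $(1-a)$-part. This gives $\frac{(1-a)\gamma\tau_n\psi}{\psi-1}\|z_{n+2}-x^*\|^2$ on the left minus $\frac{(1-a)\gamma\tau_n}{\psi-1}\|z_{n+1}-x^*\|^2$. The $\|x_{n+1}-z_{n+1}\|^2$ part is dropped or kept as needed. The other term $\gamma\tau_n\|x_{n+1}-x_n\|^2$ is added to $\omega_1\delta_n\|x_{n+1}-x_n\|^2$, which produces $\kappa_n=\omega_1\delta_n+(1-a)\gamma\tau_n$ once we discard $a\gamma\tau_n$.

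\textbf{Step 2 (dual part and the remaining terms).} The dual identities of Lemma \ref{le2} carry over verbatim with $1/\beta$ replaced by $1/\beta_n$. After multiplying by $\beta_n$, the dual terms become $b\|y_n-y^*\|^2+\|\bar z_{n+1}-y^*\|^2$ on the left, against $b\|y_{n-1}-y^*\|^2+\|\bar z_n-y^*\|^2$ on the right. These match $\beta_{n+1}A_{n+1}$ and $\beta_nA_n$ exactly, because $\beta_{n+1}\cdot\frac{1}{\beta_{n+1}}=1$. For the remaining terms:
\begin{itemize}
\item Lemma \ref{fact2.4} with $u=\psi\delta_n$, $v=1+1/\psi-\psi\delta_n$ handles the $\|x_{n+1}-x_n\|^2$ coefficient.
\item Young's inequality on $2\tau_n\langle\theta_n,x_n-x_{n+1}\rangle$ with weight $(1-a)\xi/\tau_{n-1}$ yields the $\theta_n$ term of $B_n$.
\item The conditions in $\Theta_\psi$ give the bound $\ge\omega_1\delta_n$ on the coefficient of $\|x_{n+1}-x_n\|^2$.
\end{itemize}
After multiplying by $\beta_n$, the primal terms on the left carry $\beta_n$ while $\beta_{n+1}A_{n+1}$ carries $\beta_{n+1}$.

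\textbf{Step 3 (the main obstacle).} We must verify the coefficient comparisons required by $\beta_{n+1}>\beta_n$:
\[
\beta_{n+1}\Bigl(a+\tfrac{(1-a)\psi+(1-a)\gamma\tau_{n+1}}{\psi-1}\Bigr)\lesssim\beta_n\Bigl(a+\tfrac{(1-a)(\psi+\gamma\psi\tau_n)}{\psi-1}\Bigr)
\]
for the $\|z_{n+2}-x^*\|^2$ and $\|x_{n+1}-x^*\|^2$ terms. Here we would use $\beta_{n+1}=(1+\gamma\rho_{n+1}\tau_n)\beta_n$ with $\rho_{n+1}=\frac{\psi-\varphi}{\psi+\gamma\varphi\tau_n}$ and $\tau_{n+1}\le\varphi\tau_n$. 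The check reduces to
\[
(1+\gamma\rho_{n+1}\tau_n)(\psi+\gamma\tau_{n+1})\le\psi+\gamma\psi\tau_n ,
\]
which, after substituting $\tau_{n+1}\le\varphi\tau_n$, is equivalent to $\rho_{n+1}(\psi+\gamma\varphi\tau_n)\le\psi-\varphi$, and this is exactly the choice of $\rho$. The $\|x_{n+1}-x^*\|^2$ coefficient $a$ would be handled similarly, possibly using part of the unused $a\gamma\tau_n$ term. This is where we expect the bookkeeping to be delicate.

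For the $\|x_{n+1}-x_n\|^2$ coefficient, the term $a\beta_n\|x_{n+1}-x_n\|^2$ on the left matches $a\beta_{n+1}$ in $\beta_{n+1}A_{n+1}$ only after absorbing $a\gamma\tau_n\beta_n$. We would therefore keep $a\gamma\tau_n$ rather than discard it in Step 1, and check that $a\gamma\rho_{n+1}\tau_n\le a\gamma\tau_n$, which holds since $\rho_{n+1}\le 1$. The term $\kappa_n\beta_n\|x_{n+1}-x_n\|^2$ is by construction $\beta_{n+1}\cdot\frac{\kappa_n\beta_n}{\beta_{n+1}}$, which matches $A_{n+1}$. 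Collecting everything then gives \eqref{AB}.
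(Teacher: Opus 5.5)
Your proposal is essentially the paper's proof. You start from Lemma~\ref{ale1} with $\Phi_n^y=0$, reproduce the analogue of \eqref{eq14} with $1/\beta$ replaced by $1/\beta_n$, split $\gamma\tau_n\|x_{n+1}-x^*\|^2$ into $a$- and $(1-a)$-parts and pass the latter through the $z$-identity, then multiply by $\beta_n$ and compare coefficients, using $\tau_{n+1}\le\varphi\tau_n$ with the exact choice of $\rho_{n+1}$ for $\|z_{n+2}-x^*\|^2$ and $\beta_{n+1}\le(1+\gamma\tau_n)\beta_n$ for the $a$-weighted terms.

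One correction is needed. The $a\gamma\tau_n\|x_{n+1}-x^*\|^2$ piece must be kept in Step~1, not dropped; the paper keeps it, giving the coefficient $a(1+\gamma\tau_n)$. If you drop it, you would need $a\beta_{n+1}\le a\beta_n$, which fails. If you keep it, the check is the same $\rho_{n+1}\le 1$ argument you already use for $\|x_{n+1}-x_n\|^2$. Accordingly, the combined display in Step~3 should be split into these two separate comparisons.
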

    
    \begin{proof}
    The proof for the following equation is similar to the one for Lemma \ref{le2}, which yields a result analogous to \eqref{eq14}.
    \begin{equation} \label{p1}
        \begin{split}
            &\quad 2\tau_n J(x_n,y_n) +a\| x_{n+1} - x_{n}\|^2 + a\|x_{n+1} - x^*\|^2 +(1-a)\frac{\psi}{\psi-1}\|z_{n+2}-x^* \|^2 \\
            &+\delta_n \omega_1 \| x_{n+1}- x_{n}\|^2 +\gamma \tau_n(\|x_{n+1}-x_{n}\|^2 +\|x_{n+1}-x^*\|^2) \\
            & +\frac{1}{\beta_n}(1-b)(1+b)\| y_n - \bar z_{n}\|^2 + \frac{1}{\beta_n}b\| y_n - y^*\|^2+\frac{1}{\beta_n}\| \bar{z}_{n+1} - y^*\|^2  \\
            \le & a\|x_{n}- x^*\|^2 +(1-a)\frac{\psi}{\psi-1}\| z_{n+1}-x^* \| ^2  -\frac{1}{\beta_n}b\| y_n - y_{n-1}\|^2  \\
            & + \frac{b}{\beta_n}\|y_{n-1} - y^*\|^2 + \frac{1}{\beta_n}\|\bar{z}_n - y^*\|^2 + \frac{\tau_n\tau_{n-1}}{(1-a)\xi}\| \theta_n \|^2 +  a\| x_{n} - x_{n-1}\|^2,  
        \end{split}
    \end{equation}
    where $\omega_1=(1-a)\omega-a/ \xi_1 >0$, $\Phi_n^y=0$. It follows from \eqref{zx11} that 
    \begin{align}
        &\gamma\tau_n\|x_{n+1}-x^*\|^2=a\gamma\tau_n\|x_{n+1}-x^*\|^2+(1-a)\gamma\tau_n\|x_{n+1}-x^*\|^2  \nonumber \\
        &=a\gamma\tau_n\|x_{n+1}-x^*\|^2+(1-a)\gamma\tau_n \bigg(\frac{\psi}{\psi-1}\| z_{n+2}-x^* \| ^2 -\frac{1}{\psi-1}\| z_{n+1}-x^* \| ^2+\frac{1}{\psi}\| x_{n+1}-z_{n+1} \|^2 \bigg)  \nonumber \\
        &\geq a\gamma\tau_n\|x_{n+1}-x^*\|^2+(1-a)\gamma\tau_n \bigg(\frac{\psi}{\psi-1}\| z_{n+2}-x^* \| ^2 -\frac{1}{\psi-1}\| z_{n+1}-x^* \| ^2 \bigg) . \label{p2}
     \end{align}
    Substituting   (\ref{p2}) into (\ref{p1}) yields 
    \begin{equation} \label{p3}
        \begin{split}
            &\quad 2\tau_n J(x_n,y_n) +a\| x_{n+1} - x_{n}\|^2+a(1+\gamma\tau_n)\|x_{n+1}- x^*\|^2 \\
            & + \frac{(1-a)\psi+(1-a)\psi \gamma \tau_n}{\psi-1}\| z_{n+2}-x^* \|^2 + \delta_n \omega_1 \| x_{n+1}- x_{n}\|^2 +\gamma \tau_n\|x_{n+1}-x_{n}\|^2 \\
            & +\frac{1}{\beta_n}(1-b)(1+b)\| y_n - \bar z_{n}\|^2 + \frac{b}{\beta_n}\| y_n - y^*\|^2+\frac{1}{\beta_n}\| \bar{z}_{n+1} - y^*\|^2  \\
            \le &a\|x_{n} - x_{n-1}\|^2 + a\|x_{n}- x^*\|^2 +\frac{(1-a)\psi+(1-a)\gamma \tau_n}{\psi-1}\| z_{n+1}-x^* \| ^2 \\
            &+ \frac{b}{\beta_n}\|y_{n-1} - y^*\|^2 + \frac{1}{\beta_n}\|\bar{z}_n - y^*\|^2 + \frac{\tau_n\tau_{n-1}}{(1-a)\xi}\| \theta_n \|^2  -\frac{b}{\beta_n}\| y_n - y_{n-1}\|^2.
        \end{split}
    \end{equation}
    From $\tau_{n+1}\le \varphi \tau_n$ and the definition of $\rho_n$ given in \eqref{beta}, we obtain
    \begin{align*}
         \frac{(1-a)(\psi+\psi \gamma \tau_n)}{\psi-1} &= \frac{(1-a)(\psi+ \gamma \tau_{n+1})}{\psi-1} \frac{\psi+\psi \gamma \tau_n}{\psi+ \gamma \tau_{n+1}}  \nonumber \\
        & \geq  \frac{(1-a)(\psi+ \gamma \tau_{n+1})}{\psi-1} \frac{\psi+\psi \gamma \tau_n}{\psi+ \gamma \varphi \tau_{n}}   \nonumber \\
        &= \frac{(1-a)(\psi+ \gamma \tau_{n+1})}{\psi-1}\left(1+\frac{\gamma \tau_n (\psi-\varphi)}{\psi+ \gamma \varphi \tau_{n}}\right) \nonumber \\
        &= \frac{(1-a)(\psi+ \gamma \tau_{n+1})}{\psi-1}\frac{\beta_{n+1}}{\beta_n}.
    \end{align*}
    Since $(\psi-\varphi)/(\psi+\gamma\varphi\tau_n)<1$, we can get $(1+\gamma \tau_n) \geq \left( 1+\frac{\gamma \tau_n (\psi-\varphi)}{\psi + \gamma \varphi \tau_n} \right) = \frac{\beta_{n+1}}{\beta_n}$, i.e., $\beta_n(1+\gamma \tau _n)\geq \beta_{n+1}$.
    
    Using the definitions of $A_n$ and $B_n$, and setting $\kappa_n := \omega_1 \delta_n + (1-a)\gamma \tau_n$, we can complete the proof of this lemma and deduce that (\ref{AB}).
    \end{proof}

    \begin{lemma}\label{ale3}
        The following claims hold.
        \begin{enumerate}
            \item[(i)] The linesearch of Algorithm \ref{Alg2} always terminates;
            \item[(ii)] The sequence $\{(x_n,y_n,z_n,\bar{z}_n)\}$ generated by Algorithm \ref{Alg2} is bounded;
            \item[(iii)] There exist constants $c_1 , \ c_2 >0 $ such that $\sqrt{\beta_n}\tau_n \ge c_1$ and $\beta_n \geq c_2 n^2$ for all $n \geq 1$.
        \end{enumerate}
    \end{lemma}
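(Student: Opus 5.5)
I will follow the template of Lemma \ref{le3}, with $\beta$ replaced by the increasing sequence $\beta_n$ and the energy $a_n$ replaced by $\beta_n A_n$ from Lemma \ref{ale2}.

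\emph{Part (i).} I argue by contradiction exactly as in Lemma \ref{le3}(i). Fix $n$ and define $y_n(\lambda):=\operatorname{Prox}_{\beta_n\lambda f^*}(y_n^{md}+\beta_n\lambda\nabla_y\Phi(x_n,y_{n-1}))$. Note that $\beta_n$ is computed in Step 1 from $\tau_{n-1}$, so it does not depend on the trial $\lambda$. The curve $\{y_n(\lambda):\lambda\in(0,\tau]\}$ lies in a bounded set, so local Lipschitz constants $L_{xx},L_{xy}$ are available on it, and
\[
\|\theta_n(\lambda)\|^2\le 2L_{xx}^2\|x_n-x_{n-1}\|^2+2L_{xy}^2\|y_n(\lambda)-y_{n-1}\|^2 .
\]
Since $\Phi_n^y=0$, the left side of \eqref{alin} is $O(\lambda)$ times these two squared norms. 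The right side contains $\frac{\kappa_{n-1}\beta_{n-1}}{\beta_n}\|x_n-x_{n-1}\|^2+\frac{b}{\beta_n}\|y_n(\lambda)-y_{n-1}\|^2$, with coefficients independent of $\lambda$ and positive because $\kappa_{n-1}\ge\omega_1\delta_{n-1}>0$. Letting $\mu^i\to0$ then gives the contradiction, assuming $b>0$ as in $\underline\tau$. If $b=0$, I would instead use the $\|y_n-\bar z_n\|^2$ term, since then $\bar z_n=y_{n-1}$.

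\emph{Part (ii).} Since $J\ge0$ and $B_n\ge0$ by \eqref{alin}, Lemma \ref{ale2} gives $\beta_{n+1}A_{n+1}\le\beta_nA_n\le\beta_1A_1$. Moreover $\beta_n\ge\beta_0$ is nondecreasing. Then:
\begin{itemize}[leftmargin=*]
\item From $\beta_nA_n\ge\beta_n a\|x_n-x^*\|^2$ and $\beta_nA_n\ge\beta_n\frac{(1-a)\psi}{\psi-1}\|z_{n+1}-x^*\|^2$, I get boundedness of $x_n$ and $z_n$.
\item From $\beta_nA_n\ge\|\bar z_n-y^*\|^2+b\|y_{n-1}-y^*\|^2$, I get boundedness of $\bar z_n$. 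For $y_n$, I use $\bar z_{n+1}=(1-b)y_n+b\bar z_n$ to recover $y_n$ from two consecutive $\bar z$'s.
\end{itemize}

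\emph{Part (iii).} This is the main obstacle. By (ii), global constants $L_{xx},L_{xy}$ exist on the trajectory. I then bound $\|\theta_n\|^2$ as above and read off when \eqref{alin} holds. The $x$-part needs $\tau_n\tau_{n-1}2L_{xx}^2/((1-a)\xi)\le\kappa_{n-1}\beta_{n-1}/\beta_n$. The $y$-part needs $\tau_n\tau_{n-1}2L_{xy}^2/((1-a)\xi)\le b/\beta_n$, i.e. $\beta_n\tau_n\tau_{n-1}\le c$.

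The $y$-condition is the binding one. It suggests that, whenever backtracking occurs, $\beta_n\tau_n\tau_{n-1}\gtrsim \mu c$. Together with $\beta_n/\beta_{n-1}\le1+\gamma\tau_{n-1}$ (which is bounded), this should give $\beta_n\tau_n^2\ge c'$. I plan to prove $\sqrt{\beta_n}\tau_n\ge c_1$ by induction, splitting into two cases:
\begin{itemize}[leftmargin=*]
\item If no backtracking occurs, then $\tau_n=\min\{\varphi\tau_{n-1},\tau_{\max}\}$. The growth $\sqrt{\beta_n/\beta_{n-1}}\le\sqrt{1+\gamma\tau_{\max}}$ must be dominated by $\varphi$; otherwise the quantity stays controlled while $\tau_n=\tau_{\max}$ and $\beta_n$ only grows.
\item If backtracking occurs, the rejected trial $\tau_n/\mu$ violated the condition, which yields the lower bound above.
\end{itemize}
The $x$-condition is easier: $\kappa_{n-1}\beta_{n-1}/\beta_n\ge\omega_1\delta_{n-1}/(1+\gamma\tau_{\max})$. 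This requires a lower bound on $\delta_{n-1}$, which I would handle as in Lemma \ref{le3}(iv).

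Finally, for $\beta_n\ge c_2n^2$: since $\psi>\varphi$, we have $\rho_n\ge(\psi-\varphi)/(\psi+\gamma\varphi\tau_{\max})=:\rho>0$. Hence
\[
\beta_n\ge\beta_{n-1}+\gamma\rho\,\tau_{n-1}\beta_{n-1}\ge\beta_{n-1}+\gamma\rho c_1\sqrt{\beta_{n-1}},
\]
so
\[
\sqrt{\beta_n}-\sqrt{\beta_{n-1}}=\frac{\beta_n-\beta_{n-1}}{\sqrt{\beta_n}+\sqrt{\beta_{n-1}}}\ge\frac{\gamma\rho c_1\sqrt{\beta_{n-1}}}{(1+\sqrt{1+\gamma\tau_{\max}})\sqrt{\beta_{n-1}}} .
\]
Thus $\sqrt{\beta_n}$ grows linearly in $n$, which gives $\beta_n\ge c_2n^2$.
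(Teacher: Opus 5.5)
Your parts (i) and (ii) are fine and follow the paper. Your treatment of $b=0$ via $\bar z_n=y_{n-1}$, and recovering $y_n=(\bar z_{n+1}-b\bar z_n)/(1-b)$, are small improvements, since the paper's $\varUpsilon_2$ vanishes at $b=0$. The gaps are in (iii).

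First, the $x$-branch. You say you need a lower bound on $\delta_{n-1}$ and would obtain it ``as in Lemma~\ref{le3}(iv)''. There, however, that bound came from the uniform bound $\tau_n\ge\mu\underline{\tau}$, which is exactly what Algorithm~\ref{Alg2} lacks (one expects $\tau_n\to0$). So that route fails. The missing observation, which is the paper's \eqref{ale33}, is that no such bound is needed: the factor $\tau_{n-1}$ in the $x$-condition cancels against $\delta_{n-1}$, because $\kappa_{n-1}/\tau_{n-1}\ge\omega_1\delta_{n-1}/\tau_{n-1}=\omega_1/\tau_{n-2}\ge\omega_1/\tau_{\max}$. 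Combined with $\beta_{n-1}/\sqrt{\beta_n}\ge\sqrt{\beta_0/\varsigma}$, the $x$-part holds whenever $\sqrt{\beta_n}\tau_n\le\varUpsilon_1$. Hence a rejection caused by the $x$-part gives $\sqrt{\beta_n}\tau_n>\mu\varUpsilon_1$ outright. In the no-backtracking case nothing has to ``dominate'': $\beta_n\ge\beta_{n-1}$ and $\varphi>1$ give $\sqrt{\beta_n}\tau_n\ge\min\{\sqrt{\beta_{n-1}}\tau_{n-1},\sqrt{\beta_0}\tau_{\max}\}$ directly.

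Second, and more seriously, the $y$-branch does not close. A rejection at step $n$ caused by the $y$-part gives $\beta_n\tau_n\tau_{n-1}>\mu\varUpsilon_2$. With $\tau_n\le\varphi\tau_{n-1}$ and $\beta_n\le\varsigma\beta_{n-1}$ this yields $\beta_{n-1}\tau_{n-1}^2>\mu\varUpsilon_2/(\varphi\varsigma)$, which is a bound at index $n-1$, not $n$. Your claimed $\beta_n\tau_n^2\ge c'$ would need $\tau_{n-1}\lesssim\tau_n$, i.e.\ a lower bound on $\delta_n$, which you do not have. At index $n$ you only get $\sqrt{\beta_n}\tau_n>\mu\varUpsilon_2/(\sqrt{\varsigma}\,\sqrt{\beta_{n-1}}\tau_{n-1})$, and that is useless without an \emph{upper} bound on $\sqrt{\beta_{n-1}}\tau_{n-1}$.

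The paper's remedy is an index shift: it bounds $\beta_n\tau_n^2$ through the rejection at step $n+1$. An $x$-caused rejection there also transfers back to index $n$ via $\tau_{n+1}\le\varphi\tau_n$ and $\beta_{n+1}\le\varsigma\beta_n$. Be aware, though, that the paper's dichotomy $\tau_n>\mu\hat\tau_n$, with $\hat\tau_n=\min\{\varUpsilon_1/\sqrt{\beta_n},\varUpsilon_2/(\beta_n\tau_{n-1})\}$, is justified only when a backtrack actually occurred at that step. So the following configuration is covered neither by your argument nor by the paper's: a backtrack at step $n$ giving only $\beta_n\tau_n\tau_{n-1}>\mu\varUpsilon_2$, followed by an unreduced step $\tau_{n+1}=\varphi\tau_n$.

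The only downstream use of (iii) is $\beta_n\ge c_2n^2$. Your telescoping step is correct and cleaner than the paper's induction, and without using $c_1$ it already gives $\sqrt{\beta_n}\ge\sqrt{\beta_0}+c\sum_{k<n}\sqrt{\beta_k}\tau_k$. So a robust fix is to prove the averaged bound $\sum_{k<n}\sqrt{\beta_k}\tau_k\gtrsim n$ instead of the pointwise one:
\begin{itemize}
\item Once $\sqrt{\beta_k}\tau_k$ falls below $c_*:=\min\{\mu\varUpsilon_1,\sqrt{\mu\varUpsilon_2/(\varphi\varsigma)},\sqrt{\beta_0}\tau_{\max}\}$, each subsequent step either restores it at once or is unreduced with $\tau_k=\varphi\tau_{k-1}$. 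This is because a $y$-caused rejection would need the previous value above $c_*$, and an $x$-caused rejection or $\tau_k=\tau_{\max}$ restores it immediately.
\item Hence the value grows by at least the factor $\varphi$ per step during a dip.
\item The dip starts no lower than $\mu\varUpsilon_2/(\sqrt{\varsigma}\,\sqrt{\beta_{n-1}}\tau_{n-1})$, so recovery takes $O\bigl(1+\log(\sqrt{\beta_{n-1}}\tau_{n-1})\bigr)$ steps.
\item The preceding term $\sqrt{\beta_{n-1}}\tau_{n-1}\ge c_*$ pays for those steps in the sum.
\end{itemize}
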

    
    \begin{proof}
        (i) Assume that the linesearch procedure defined in Algorithm \ref{Alg2} fails to terminate at the $n$-th iteration. Then, in Step 2 of the Algorithm \ref{Alg2}, for all $i=0,1,2,\ldots$ and $\lambda=\tau\mu^{i}$, 
         we have
        \begin{align}
            \frac{\lambda\tau_{n-1}}{(1-a)\xi}\|\theta_{n}(\lambda)\|^{2} & >
            \frac{\kappa_{n-1}\beta_{n-1}}{\beta_{n}}\|x_{n}-x_{n-1}\|^{2}+\frac{b}{\beta_{n}}\|y_{n}(\lambda)-y_{n-1}\|^{2} + \frac{1}{\beta_n}(1-b)(1+b)\|y_n - \bar{z}_n\|^2 \nonumber\\
            & \ge \frac{\kappa_{n-1}\beta_{n-1}}{\beta_{n}}\|x_{n}-x_{n-1}\|^{2}+\frac{b}{\beta_{n}}\|y_{n}(\lambda)-y_{n-1}\|^{2}. \label{ale31}
        \end{align}
        Similar to the proof of Lemma \ref{le3}, for all $\lambda=\tau\mu^{i}$ with $i=0,1,2,\ldots$, we have $y_{n}(\lambda)\in B[y_{n-1};r]$. Then, by combining (\ref{xyL}), (\ref{ale31}) and $\lambda=\tau\mu^{i}$, we obtain
        $\tau \mu^i2\tau_{n-1}L_{xx}^{2}/((1-a)\xi) > \kappa_{n-1}\beta_{n-1}/\beta_{n}$
        or $\tau\mu^{i}(2\tau_{n-1}L_{xy}^{2}/((1-a)\xi)) > b/\beta_{n}$ for all $i\geq 0$. This is impossible since $\mu^{i}\rightarrow 0$ as $i\rightarrow\infty$, which indicates that the linesearch procedure must terminate in finite steps for each iteration.\\
        
        (ii) Since $J(x_{n},y_{n})$, $A_{n}$ and $B_{n}$ are all nonnegative,  and   $\beta_{n}\geq\beta_{n-1}$ for all $n\geq 1$, implied by (\ref{beta}),  we can obtain from (\ref{AB}) that 
        $\beta_1A_n\leq \beta_{n}A_{n}\leq\beta_{n-1}A_{n-1}\leq\cdots\leq\beta_{1}A_{1}.$
        Hence, using the definition of $A_{n}$ in (\ref{An1}), we can derive
        \begin{align*}
            &a\|x_n-x^*\|^2 \le A_1,   \; \frac{(1-a)\psi+(1-a)\gamma \tau _n}{(\psi-1)}\|z_{n+1}-x^{*}\|^{2}\leq A_{1}, \quad \nonumber \\
            & \|y_{n-1}-y^{*}\|^{2}\leq\beta_{n}A_{n}\leq\beta_{1}A_{1}, \quad \|\bar{z}_n-y^{*}\|^{2}\leq\beta_{n}A_{n}\leq\beta_{1}A_{1},
        \end{align*}
        which implies that $\{(x_n,z_{n},y_n,\bar{z}_n):n\geq 1\}$  is bounded.\\
        
        (iii) Define  $h(\tau) := 1 + \frac{(\psi-\varphi)\gamma \tau}{\psi + \gamma\varphi\tau}$, which is   increasing  in $\tau>0$. Since $\tau_{n}\leq\tau_{\max}$ for all $n\geq 0$, it follows that $\varsigma:=h(\tau_{\max})\geq h(\tau_{n})=1+\gamma\rho_{n+1}\tau_{n}\geq 1$. It follows from part (ii) of this lemma that the sequence $\{(x_{n},y_{n}):n\geq 1\}$ is bounded. Moreover, using the left-hand-side inequality in (\ref{xyL}), we know that the linesearch condition (\ref{alin}) is satisfied provided that
        \begin{align}
            &2\tau_n\tau_{n-1}L_{xx}^{2}/((1-a)\xi) \le \kappa_{n-1}\beta_{n-1}/\beta_{n},    
             \quad 2\tau_n\tau_{n-1}L_{xy}^{2}/((1-a)\xi) \le b/\beta_{n},  \label{ale32}
        \end{align}
        Since $\kappa_{n-1}=\omega_1\delta_{n-1}+(1-a)\gamma \tau_{n-1} \ge \omega_1\delta_{n-1}$, $\beta_{n}=(1+\gamma\rho_{n}\tau_{n-1})\beta_{n-1}\leq\varsigma\beta_{n-1}$ and $\tau_{n}\leq\tau_{\max}$ for all $n\geq 1$, the inequalities in (\ref{ale32}) hold if
        \begin{align}
            \frac{\kappa_{n-1}\beta_{n-1}}{\tau_{n-1}\sqrt{\beta_{n}}}&\geq\frac{\omega_1\beta_{n-1}}{\tau_{n-2}\sqrt{\beta_{n}}}\geq\frac{\omega\sqrt{\beta_{0}}}{\tau_{\max}\sqrt{\varsigma}}. \label{ale33}
        \end{align}
        Denote $\varUpsilon_{1}:=\frac{(1-a)\xi}{2L_{xx}^{2}}\frac{\omega_1\sqrt{\beta_{0}}}{\tau_{\max}\sqrt{\varsigma}}$ and $\varUpsilon_{2}:=\frac{(1-a)\xi  b}{2L_{xy}^{2}}$. 
        It follows from (\ref{ale33}) that the left-hand-side inequality in (\ref{ale32}) holds when $\sqrt{\beta_{n}}\tau_{n}\leq\varUpsilon_{1}$, while the right-hand-side inequality is equivalent to $\beta_{n}\tau_{n}\tau_{n-1}\leq\varUpsilon_{2}$. 
        To proceed, for any fixed $n\ge 1$, we introduce an auxiliary threshold 
        $$
        \hat{\tau}_n := \min\left\{\frac{\varUpsilon_1}{\sqrt{\beta_n}}, \frac{\varUpsilon_2}{\beta_n \tau_{n-1}}\right\}.
        $$
        A direct substitution verifies that if the stepsize were set to $\hat{\tau}_n$, then both inequalities in (\ref{ale32}) and hence the linesearch condition (\ref{alin}) would be satisfied. Recalling that in Algorithm 2 the actual stepsize $\tau_n$ is chosen as the first element of the sequence $\{\tau \mu^i\}$ satisfying (\ref{alin}), and that the linesearch condition is monotone with respect to decreasing stepsizes, it follows that $\tau_n$ must be strictly larger than $\mu \hat{\tau}_n$. Otherwise, the preceding trial $\tau_n/\mu \le \hat{\tau}_n$ would already satisfy the condition, contradicting the minimality of $i$. Consequently, for all $n\ge 1$, we must have either 
        $$
        \tau_n > \frac{\mu \varUpsilon_1}{\sqrt{\beta_n}} \quad \text{or} \quad \tau_n > \frac{\mu \varUpsilon_2}{\beta_n \tau_{n-1}}.
        $$
        In the former case, we have $\sqrt{\beta_{n}}\tau_{n}>\mu\varUpsilon_{1}$. In the latter case, with $n$ replaced by $n+1$, we have $\beta_{n+1}\tau_{n}\tau_{n+1}>\mu\varUpsilon_{2}$. Then, by $\tau_{n+1}\leq\varphi\tau_{n}$, we derive $\beta_{n+1}\tau_{n}^{2}\geq\beta_{n+1}\tau_{n}\tau_{n+1}/\varphi>\mu\varUpsilon_{2}/\varphi$, and thus
        \begin{center}
             $\beta_{n}\tau_{n}^{2}=\frac{\beta_{n+1}\tau_{n}^{2}}{1+\gamma\rho_{n+1}\tau_{n}}>\frac{\mu\varUpsilon_{2}/\varphi}{1+\gamma\rho_{n+1}\tau_{n}}\geq\frac{\mu\varUpsilon_{2}}{\varphi\varsigma}>0
            $.
        \end{center}
        Hence, we always have     $\sqrt{\beta_{n}}\tau_{n}>c_{1}:=\min(\mu\varUpsilon_{1},\sqrt{\mu\varUpsilon_{2}/(\varphi\varsigma)})>0$. Since $\tau_{n}\leq\tau_{\max}$, we have
        \begin{align}
            \beta_{n+1}=\beta_{n}\left(1+\frac{(\psi-\varphi)\gamma \tau_n}{\psi +\gamma\varphi\tau_n}\right)\geq\beta_{n}+\frac{(\psi-\varphi)\gamma\sqrt{\beta_{n}}\tau_{n}}{\psi +\gamma\varphi\tau_{\max}}\sqrt{\beta_{n}}\geq\beta_{n}+\varrho\sqrt{\beta_{n}}, \label{ale34}
        \end{align}
        where $\varrho:=\frac{(\psi-\varphi)\gamma c_1}{\psi+\gamma\varphi\tau_{max}}>0$. From (\ref{ale34}), it is easy to show by induction that $\beta_{n}\geq c_{2}n^{2}$ for all $n\geq 1$ with $c_{2}:=\min(\varrho^{2}/9,\beta_{1})>0$. This completes the proof.
    \end{proof}

    \subsection{Convergence and Convergence Rate}
        \begin{thm}[Convergence results] \label{con2}
            Let $\{(z_n, x_n, y_n, \beta_n, \tau_n): n \geq 1\}$ be the sequence generated by Algorithm \ref{Alg2}. Then there exist constant $C_1 > 0$ such that,  for every $N \ge 1$,
            $$\|x_{N+1} - x^*\| \leq C_1 / (N + 1), \quad    J(\hat{x}_N, \hat{y}_N) \leq \frac{\beta_1}{c_3} \frac{A_1}{N^2},$$ 
            where $\hat{x}_N$ and $\hat{y}_N$ are given by
            \begin{align}
                \hat{x}_N = \frac{1}{s_N} \sum_{n=1}^N \beta_n \tau_n x_n \ \ \text{and} \ \ \hat{y}_N = \frac{1}{s_N} \sum_{n=1}^N \beta_n \tau_n y_n \ \ \text{with}\ \ s_N = \sum_{n=1}^N \beta_n \tau_n.\label{c0}
            \end{align}
        \end{thm}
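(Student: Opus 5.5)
The plan is to telescope the Lyapunov-type inequality \eqref{AB} of Lemma \ref{ale2} and then use the growth estimates of Lemma \ref{ale3}(iii).

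\textbf{Nonnegativity of $B_n$.} By the linesearch condition \eqref{alin}, $B_n\ge 0$ for every $n$. Since $J\ge 0$, inequality \eqref{AB} then gives
\[
\beta_{n+1}A_{n+1}\le \beta_n A_n-2\tau_n\beta_n J(x_n,y_n).
\]

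\textbf{Ergodic gap bound.} Summing from $n=1$ to $N$ and discarding $\beta_{N+1}A_{N+1}\ge 0$ yields
\[
2\sum_{n=1}^N \beta_n\tau_n J(x_n,y_n)\le \beta_1 A_1 .
\]
Because $J(\cdot,\cdot)$ is jointly convex, Jensen's inequality with weights $\beta_n\tau_n/s_N$ gives
\[
J(\hat x_N,\hat y_N)\le \frac{\beta_1A_1}{2s_N}.
\]
It remains to bound $s_N$ from below by a multiple of $N^2$. Lemma \ref{ale3}(iii) gives $\sqrt{\beta_n}\,\tau_n\ge c_1$ and $\beta_n\ge c_2n^2$, hence
\[
\beta_n\tau_n=\sqrt{\beta_n}\,\bigl(\sqrt{\beta_n}\,\tau_n\bigr)\ge c_1\sqrt{c_2}\,n .
\]
Therefore
\[
s_N\ge c_1\sqrt{c_2}\,\frac{N(N+1)}{2}\ge \frac{c_1\sqrt{c_2}}{2}\,N^2 .
\]
Setting $c_3:=c_1\sqrt{c_2}$ (up to the harmless factor of $2$ absorbed from the previous display) gives $J(\hat x_N,\hat y_N)\le \beta_1A_1/(c_3N^2)$.

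\textbf{Pointwise rate for $x_{N+1}$.} From the same telescoped inequality, $\beta_{N+1}A_{N+1}\le\beta_1A_1$. The definition \eqref{An1} gives $A_{N+1}\ge a\|x_{N+1}-x^*\|^2$ with $a>0$. Combining this with $\beta_{N+1}\ge c_2(N+1)^2$ yields
\[
\|x_{N+1}-x^*\|^2\le \frac{\beta_1A_1}{a\,c_2\,(N+1)^2},
\]
so the claim holds with $C_1=\sqrt{\beta_1A_1/(ac_2)}$. Note that the $C_1$ here is a new constant, not the one from the earlier theorem.

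\textbf{Main obstacle.} The only delicate point is the lower bound on $s_N$. It requires combining the two conclusions of Lemma \ref{ale3}(iii) as above rather than using $\tau_n$ bounded below, because $\tau_n$ may decay like $1/n$ in this accelerated scheme. I would also check that the sign conventions in \eqref{AB} allow dropping $B_n$, i.e.\ that \eqref{alin} is exactly the statement $B_n\ge 0$. A final check is that $A_n\ge 0$, which holds since $\kappa_{n-1}\ge 0$ and all coefficients in \eqref{An1} are positive.
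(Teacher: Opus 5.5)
Your proof is correct and shares the paper's overall structure. In both, $B_n\ge0$ is exactly \eqref{alin}, so \eqref{AB} telescopes; both read off $a\|x_{N+1}-x^*\|^2\le\beta_1A_1/\beta_{N+1}\le\beta_1A_1/(c_2(N+1)^2)$ with the same $C_1=\sqrt{\beta_1A_1/(ac_2)}$; and both apply Jensen to get $J(\hat x_N,\hat y_N)\le\beta_1A_1/(2s_N)$.

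The one genuine difference is the lower bound on $s_N$. The paper uses the $\beta$-update \eqref{beta}: since $\rho_{n+1}<1$, $\beta_n\tau_n=(\beta_{n+1}-\beta_n)/(\gamma\rho_{n+1})\ge(\beta_{n+1}-\beta_n)/\gamma$. Hence $s_N$ telescopes to $s_N\ge(\beta_{N+1}-\beta_1)/\gamma$, and only $\beta_{N+1}\ge c_2(N+1)^2$ is then invoked. You instead multiply the two conclusions of Lemma \ref{ale3}(iii), $\beta_n\tau_n=\sqrt{\beta_n}\,(\sqrt{\beta_n}\,\tau_n)\ge c_1\sqrt{c_2}\,n$, and sum an arithmetic series.

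Your route never uses the update rule and gives an explicit $c_3=c_1\sqrt{c_2}$ valid for every $N\ge1$. The factor $2$ in fact cancels exactly, so $J(\hat x_N,\hat y_N)\le\beta_1A_1/(c_3N^2)$ holds precisely, not just up to constants.

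The paper's route is more structural, but as written the step from $(\beta_{N+1}-\beta_1)/\gamma$ to $c_3N^2$ needs a short extra argument for small $N$. Since $c_2\le\beta_1$, the quantity $c_2(N+1)^2-\beta_1$ can be negative at $N=1$. The natural fix is to sum \eqref{ale34} directly, giving $\beta_{N+1}-\beta_1\ge\varrho\sqrt{c_2}\,N(N+1)/2$, which is essentially your bound with a worse constant. Both arguments ultimately rest on Lemma \ref{ale3}(iii), so neither is more general.
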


    \begin{proof}
        (i) It follows from (\ref{alin}) and (\ref{Bn1}) that $B_n \geq 0$ for any $n \geq 1$. Let $y \in \mathbb{R}^p$ be arbitrarily fixed. By dropping $B_n$ on the right-hand side of (\ref{AB}) and taking a sum over $n = 1, \ldots, N$, we obtain
        \begin{align}
              \beta_{N+1} A_{N+1} + 2\sum_{n=1}^N \beta_n \tau_n J(x_n, y_n) \leq \beta_1 A_1.  \label{c1}
        \end{align}
        Together with the fact that $J(x_n,  y_n)$ is always nonnegative,   we infer from (\ref{c1}) and the definition of $A_n$ in (\ref{An1}) that
        \begin{align}
            a\|x_{N+1} - x^*\|^2 \leq \frac{ \beta_1 A_1}{\beta_{N+1}}\le \frac{\beta_1A_1}{c_2(N+1)^2}.  \label{c2}
        \end{align}
        Therefore, it follows from (\ref{c2}) and item (iii) of Lemma \ref{ale3}  that $\|x_{N+1} - x^*\| \leq C_1 / (N + 1)$ with $C_1 := \sqrt{ \beta_1 A_1 / ac_2} > 0$.  
    
        (ii) $J(x,  y)$ is jointly convex in $(x,y )$. Dropping  the nonnegative term $\beta_{N+1} A_{N+1}$ from (\ref{c1}), and applying   (\ref{c0}) with Jensen's inequality, we obtain   
        \begin{align}
             2J(\hat{x}_N, \hat{y}_N) \leq \frac{2}{s_N} \sum_{n=1}^N \beta_n \tau_n J(x_n, y_n) \leq \frac{\beta_1 A_1}{s_N}.  \label{c3}
        \end{align} 
        Since $\varphi > 1$, we   have $\rho_{n+1} < \frac{\psi-\varphi}{\psi} = 1 - \frac{\varphi}{\psi} < 1$. It follows from (\ref{beta}) that $\beta_n\tau_n = \frac{\beta_{n+1}-\beta_n}{\rho_{n+1}\gamma} \ge \frac{\beta_{n+1}-\beta_n}{\gamma}$.
        Summing from $n=1$ to $N$   and using Lemma~\ref{ale3}(iii) with $\beta_{N+1} \geq c_2 (N + 1)^2$, we obtain  $$s_N = \sum_{n=1}^N \beta_n \tau_n \geq (\beta_{N+1} - \beta_1) / \gamma \geq c_3 N^2$$ for some $c_3 > 0$. Therefore, it follows from (\ref{c3}) that
        $J(\hat{x}_N, \hat{y}_N) \leq \frac{\beta_1A_1}{2c_3} \frac{1}{N^2}.$ This completes the proof.
    \end{proof}

\section{A New Fully Adaptive Proximal Gradient Method}\label{section5}
    In the section, we present an adaptive variant of  Algorithm \ref{Alg1} to solve the saddle-point problem \eqref{finite_sum_saddle}, namely, \begin{equation}\label{finite_sum_saddle2}
        \min_{x\in \mathbb{R}^{q}} \max_{y\in \mathbb{R}^{p}} \; g(x) + \langle H(x), y \rangle - \iota_{\{\mathbf{1}/p\}}(y).
    \end{equation}
    where $H(x) := (h_1(x), \dots, h_p(x))^\top$, $\mathbf{1} \in \mathbb{R}^p$ denotes the vector of all ones, and $\iota_{\{\mathbf{1}/p\}}$ is the indicator function of the singleton set $\{\mathbf{1}/p\}$. 
    It is obvious that   this problem can be   cast as a special case of problem \eqref{p} by assigning $\Phi(x,y) = \langle H(x), y \rangle$ and $f^*(y) = \iota_{\{\mathbf{1}/p\}}(y)$. As mentioned in Section \ref{section1},    the composite convex optimization problem  \eqref{finite_sum}, namely,     \begin{equation}\label{finite_sum2}
        \min_{x\in \mathbb{R}^{q}} \; g(x) + \frac{1}{p}\sum_{i=1}^{p} h_i(x).
    \end{equation}
    can be transformed into problem \eqref{finite_sum_saddle2}.  Consequently, the proposed adaptive variant of Algorithm~\ref{Alg1} can be directly applied to solve  the composite convex optimization problem\eqref{finite_sum2}.    
 
    We next   exploit  the specific structure of problem \eqref{finite_sum_saddle2} to propose  an adaptive variant of  Algorithm \ref{Alg1}.
 
    Applying Algorithm \ref{Alg1} to solve the saddle-point problem \eqref{finite_sum_saddle2}, we can readily obtain the following:

    (i)The primal gradient takes the form $\nabla_x \Phi(x,y) = H'(x)^{\top} y$, where the Jacobian is given by $H'(x) = [\nabla h_1(x), \ldots, \nabla h_p(x)]^{\top}$;

    (ii)$\nabla_y \Phi(x,y) = H(x)$ and the local Lipschitz constant $L_{yy}$ reduces to $0$;

    (iii)The dual variable remains constant at $y_n \equiv \mathbf{1}/p \in \mathbb{R}^p$ for all $n \geq 1$. 

    It also follows that $\nabla_x \Phi(x_n, y_n) = \nabla h(x_n)$ for any $n \geq 1$, and thus $\theta_n = \nabla h(x_n) - \nabla h(x_{n-1})$ and $\Phi_n^y \equiv 0$ for all such $n$.

    By setting $\beta = 1$, and using $\Phi_n^y = 0$ and $y_n \equiv \mathbf{1}/p $,  the linesearch condition \eqref{lin} correspondingly simplifies correspondingly 
    \begin{equation} \label{eq:linesearch_mod}
        \frac{\tau_n \tau_{n-1}}{(1-a)\xi} \|\theta_n\|^2 \leq \nu r_n + (1-\nu)c_n,
    \end{equation}
    which together with $\delta_{n-1} = \tau_{n-1}/\tau_{n-2}$ implies that   
    $$
    \tau_n \leq \frac{(1-a)\xi \bigl( \nu r_n + (1-\nu) c_n \bigr)}{\delta_{n-1}\tau_{n-2}\|\theta_n\|^2}.
    $$
    By setting $\eta = 0$, we immediately have $c_n = 0$ for all $n \ge 1$, which, together with $r_n = \omega_1 \|x_n - x_{n-1}\|^2$ and $\theta_n = \nabla h(x_n) - \nabla h(x_{n-1})$, implies that
    $$
    \tau_n \leq \frac{(1-a)\nu\xi\omega_1}{\delta_{n-1}\tau_{n-2}} \cdot \frac{\| x_n - x_{n-1} \|^2}{\| \nabla h(x_n) - \nabla h(x_{n-1}) \|^2}.
    $$

    Based on the above analysis, we obtain an adaptive variant of Algorithm \ref{Alg1}, which is indeed a linesearch-free proximal gradient method.
    
    \begin{algorithm}[H] 
    \caption{NPGM: New adaptive proximal gradient method with two convex combinations  }
        \begin{algorithmic} 
            \State \textbf{Initialization:} Choose $\psi\in(1,1+\sqrt{3}),\ (\xi,\xi_1,\varphi,\omega_1) \in \Theta_\psi$, $\tau_{max}>0$, $\nu \in (0,1),\ \mu \in (0,1),\ a\in (0,1)$. Choose $x_0 \in \textbf{dom}(g)$ and $\tau_0 \in (0,\tau_{max}]$. Set $z_0=x_0$ and $n=1$.
            \State \textbf{Main Iteration:}
                \State Step 1. Compute 
                \begin{gather}
                    z_{n} =\frac{\psi-1}{\psi} x_{n-1} +\frac{1}{\psi} z_{n-1}, \quad
                    x_{n}^{md} =a x_{n-1} +(1-a)z_{n},  \\
                    x_{n}=\text{Prox}_{\tau_{n-1}}(x_n^{md}-\tau_{n-1} \nabla_x \Phi(x_{n-1}, \mathbf{1}/p)).
                \end{gather}
            \State Step 2. Set
                $$\tau_n = \min \Bigl\{ \varphi \tau_{n-1}, \;
                \frac{\nu \xi (1-a)\omega_1}{\tau_{n-2}} \cdot \frac{\| x_n - x_{n-1} \|^2}{\| \nabla h(x_n) - \nabla h(x_{n-1}) \|^2}, \;\tau_{\max} \Bigr\}.$$
            \State Step 3. $n \leftarrow n + 1$ and go to Step 1.
        \end{algorithmic}  \label{Alg3}
    \end{algorithm} 
    
\section{Numerical Experiments}\label{section6}
    In this section, we evaluate the numerical performance of the proposed algorithms through comprehensive numerical experiments, benchmarking it against several state-of-the-art primal-dual methods on quadratically constrained quadratic programming (QCQP) problems and sparse logistic regression (SLR) problems. All computational tests are executed using \textsc{Matlab} R2023b on a 64-bit Windows   equipped with an Intel Core i5-10500 CPU (3.10 GHz) and 8 GB of RAM. 
    
    The subsequent evaluations are structured into two primary experimental groups.
    
    \subsection{Adaptive Step Size Ratio \texorpdfstring{$\beta$}{beta}}
    
    To ensure a fair and meaningful comparison across all tested algorithms, we employ a dynamic update strategy for the step size ratio $\beta$. This mechanism is designed to continuously balance the primal and dual infeasibilities, which denoted as $\operatorname{pinf}_n$ and $\operatorname{dinf}_n$, respectively. Following the established practice in \cite{ref25}, by applying Moreau's decomposition to the dual update step, we can implicitly define an auxiliary variable $w_n := \operatorname{Prox}_{f/(\beta\tau_n)}(y_n^{md}/(\beta\tau_n) + H(x_n))$, which yields the relation $y_n = y_n^{md} + \beta\tau_n(H(x_n) - w_n)$. Based on this, we dynamically adjust the step size ratio $\beta$ by balancing the primal and dual infeasibilities, denoted as $\operatorname{pinf}_n$ and $\operatorname{dinf}_n$ respectively, which are computed as:
    \begin{equation}
        \operatorname{pinf}_n := \|H(x_n) - w_n\|_1 = \frac{1}{\beta\tau_n} \|y_n - y_n^{md}\|_1,
    \end{equation}
    and
    \begin{equation}
        \operatorname{dinf}_n := \frac{\operatorname{dist}(-H'(x_n)^\top y_n - \nabla h(x_n), \partial g(x_n))}{1 + \|x_n\|_1},
    \end{equation}
    where $\operatorname{dist}(v, S)$ represents the distance from the vector $v$ to the set $S$ measured by the $\ell_1$-norm.
    Following the established practice in \cite{ref25}, $\beta$ is adjusted via the following adaptive rule:
    \begin{equation}\label{beta1}
        \beta =
        \begin{cases}
            \max\{0.8\beta, \underline{\beta}\}, & \text{if } r_n \leq 0.8, \\
            \beta, & \text{if } r_n \in (0.8, 1.25), \\
            \min\{1.25\beta, \overline{\beta}\}, & \text{if } r_n \geq 1.25,
        \end{cases} 
    \end{equation}
    where $r_n =\frac{\operatorname{pinf}_n}{\operatorname{dinf}_n}$ signifies the residual ratio, and $\underline{\beta}$ and $\overline{\beta}$ impose strict lower and upper bounds to prevent numerical instability. Consistent with the configuration of PDAc-L \cite{ref25}, we initialize $\beta = 1$ for all   experiments and enforce the bounding interval as $\underline{\beta} = 0.01$ and $\overline{\beta} = 100$. Furthermore, the    parameters specific to the baseline PDAc-L algorithm are strictly preserved as recommended in its original literature \cite{ref25}.

    \subsection{Quadratically Constrained Quadratic Programming Problems}
         In the subsection, we benchmark the empirical efficiency of the proposed \textcolor{black}{NPDAL-n} (Algorithm \ref{Alg1}) against two established baselines: the primal-dual algorithm with backtracking (PDB) \cite{ref10} and PDAc-L \cite{ref25}. The evaluation is conducted on a suite of convex quadratically constrained quadratic programming (QCQP) problems using synthetically generated datasets, adopting the experimental protocol from \cite{ref10}.

        The QCQP instances are formulated as 
        \begin{equation}\label{QCQP}
        \begin{aligned}
            h_{\mathrm{opt}} := \min_{x \in X} & \quad h(x) := \frac{1}{2}x^{\top}A_0 x + b_0^{\top}x \\
            \text{s.t.} & \quad h_j(x) := \frac{1}{2}x^{\top}A_j x + b_j^{\top}x - c_j \le 0, \quad j \in \{1, \dots, m\},
        \end{aligned}
        \end{equation}
        where the feasible bounding box is defined as $X := [-10, 10]^n$. To construct the problem instances, the linear coefficient vectors $\{b_j\}_{j=0}^m \subset \mathbb{R}^n$ are sampled from a standard normal distribution, while the scalar offsets $\{c_j\}_{j=1}^m \subset \mathbb{R}$ are drawn uniformly from $[0,1]$. The symmetric positive semi-definite matrices are constructed via eigen-decomposition $A_j = \Lambda_j^{\top} S_j \Lambda_j$ for all $j \in \{0, 1, \dots, m\}$. Here, each $\Lambda_j \in \mathbb{R}^{n \times n}$ represents a randomly generated orthogonal matrix, and $S_j \in \mathbb{R}_+^{n \times n}$ is a diagonal matrix containing eigenvalues independently and uniformly distributed across $[0,100]$ (where zero eigenvalues are explicitly permitted).
        
        With the notation $H(x) := (h_1(x), \dots, h_m(x))^{\top}$ and  $\Phi(x,y) := h(x) + \langle y, H(x) \rangle$, the QCQP model can be naturally reformulated as the following minimax problem:
        $$
        \min_x \max_y \; g(x) + \Phi(x,y) - f^*(y),
        $$
        where $g(x) := \iota_X(x)$ restricts the primal variable to the box $X$, and $f^*(y) := \iota_{\mathbb{R}_+^m}(y)$ imposes non-negativity on the dual multipliers. Since $\Phi(x,y)$ exhibits strict linearity with respect to $y$, the partial gradients evaluate directly to $\nabla_x \Phi(x,y) = H'(x)^{\top}y + A_0x + b_0$ and $\nabla_y \Phi(x,y) = H(x)$, with $H'(x)$ denoting the Jacobian. Crucially, under this structural property, the dual linearization error vanishes identically (i.e., $\Phi_n^y \equiv 0$). Consequently, verifying the linesearch condition requires negligible computational overhead for both PDAc-L and \textcolor{black}{NPDAL-n}.
        
        During execution, the step size ratio $\beta$ dynamically adapts via the mechanism in \eqref{beta1}. We obtain the   optimal value $h_{\mathrm{opt}}$ utilizing the MOSEK solver (interfaced through CVX\footnote{Downloaded from \url{http://cvxr.com/cvx/}}). The baseline PDB is terminated upon satisfying $\max\{e_{\mathrm{obj}}(x_n), e_{\mathrm{con}}(x_n)\} \le \epsilon$, where the relative objective error and maximal constraint violation are computed as 
        \begin{equation}\label{e}
            e_{\mathrm{obj}}(x) := \frac{|h(x) - h_{\mathrm{opt}}|}{|h_{\mathrm{opt}}|} \quad \text{and} \quad e_{\mathrm{con}}(x) := \frac{1}{m}\sum_{j=1}^{m}\max\{h_j(x), 0\}.
        \end{equation}
        For PDAc-L and \textcolor{black}{NPDAL-n}, the algorithmic terminations  include both the aforementioned error tolerance and an additional primal-dual stationarity condition: $\max\{\operatorname{pinf}_n, \operatorname{dinf}_n\} < \epsilon_{pd}$. A hard safeguard of $n_{\mathrm{max}}$ iterations is uniformly applied across all methods to prevent infinite loops. For this specific experiment, the tolerances are tightly fixed at $\epsilon = 10^{-8}$ and $\epsilon_{pd} = 10^{-6}$, capped at a maximum of $n_{\mathrm{max}} = 5 \times 10^4$ iterations.

        In the QCQP experiment, the parameter settings of each algorithm are as follows:
        \par NPDAL-n: $a = 0.02,\ b = 0.775,\ \psi = 1.9,\ \varphi = 1.3,\ \xi = 0.69,\ \xi_1 = 0.8,\ \mu = 0.7,\ \nu = 0.9,\ \ M = 5,\ \eta = 0.9,\ \omega = 2\psi - \xi - (\psi^3 \varphi) / (1 + \psi),\ \omega_1 = (1 - a)\omega - (a / \xi_1).$
        \par PDAc-L: $\psi = 2,\ \phi = 6/5,\ \nu = 0.9,\ \mu = 0.7,\ \xi = 2/5,\ M = 5,\ \eta = 0.9$ \cite{ref25}
        \par PDB: $\eta = 0.7,\ \tau_0 = 0.001,\ \gamma_0 = 1,\ \mu = 0$ \cite{ref10}.
        
        Table \ref{table1} reports  the computational   results of PDB, PDAc-L, and \textcolor{black}{NPDAL-n} across various problem dimensions, where ``Iter'' denotes the number of iterations required for convergence, ``Time'' denotes the CPU time in seconds, and ``\#LS'' denotes the  number of steps triggered by the linesearch subroutine.

        \begin{table}[htbp]
            \centering
            \caption{Comparison of PDB, PDAc-L and NPDAL-n on the QCQP problems with different values of (n,m).}
            \label{table1}
            \begin{tabular}{|c|c|c|c|c|c|c|c|c|c|c|}
                \hline
                \multirow{2}{*}{n} & \multirow{2}{*}{m} & \multicolumn{3}{c|}{NPDAL-n} & \multicolumn{3}{c|}{PDAc-L} & \multicolumn{3}{c|}{PDB} \\ \cline{3-11}
                & & Iter & Time & \#LS & Iter & Time & \#LS & Iter & Time & \#LS \\ \hline
                100 & 10   & 192  & 0.0855  & 88   & 355  & 0.1415  & 171  & 2345 & 1.2213   & 2282 \\ \hline
                100 & 30   & 245  & 0.2018  & 114  & 1051 & 0.5359  & 542  & 6872 & 10.4429  & 6703 \\ \hline
                100 & 50   & 370  & 0.5728  & 260  & 1546 & 1.0059  & 789  & 8855 & 33.1045  & 8640 \\ \hline
                500 & 10   & 200  & 0.7257  & 141  & 710  & 2.1863  & 358  & 2491 & 24.9635  & 2428 \\ \hline
                500 & 30   & 382  & 3.5985  & 365  & 952  & 8.0479  & 485  & 4753 & 137.7556 & 4637 \\ \hline
                500 & 50   & 845  & 12.6326 & 609  & 2077 & 28.9895 & 1050 & 8164 & 383.6959 & 7969 \\ \hline
                1000& 10   & 227  & 2.7184  & 162  & 678  & 7.1844  & 340  & 2408 & 93.6593  & 2348 \\ \hline
                1000& 30   & 648  & 20.9743 & 467  & 1150 & 33.9674 & 579  & 5936 & 673.3687 & 5793 \\ \hline
                1000& 50   & 993  & 57.1765 & 720  & 1783 & 84.4744 & 902  & 5997 & 1088.3978& 5854 \\ \hline
            \end{tabular}
        \end{table}

        As demonstrated by Table \ref{table1}, \textcolor{black}{NPDAL-n} outperforms both PDAc-L and PDB across all dimensional scales. Specifically, \textcolor{black}{NPDAL-n} requires significantly fewer iterations and lower linesearch overhead, which collectively translate to a substantial reduction in total CPU time. Notably, as the variable dimension $n$ and the number of constraints $m$ scale up concurrently, the baseline PDB algorithm suffers from severe   degradation in efficiency. In contrast, \textcolor{black}{NPDAL-n} demonstrates highly stable and graceful scalability. Its relative acceleration over the benchmark solvers becomes increasingly pronounced under complex, high-dimensional, and heavily constrained conditions.

        To further visualize the convergence behavior, we use the QCQP problem with  $n=500$  and $m=10$ to evaluate the three test algorithms across 10 independent random instances. Figure \ref{Figure-2} plots the evolution of the relative objective error $e_{\mathrm{obj}}(x_n)$ and the maximal  constraint violation $e_{\mathrm{con}}(x_n)$ against both the iteration number and CPU time. In these visualizations, the bold solid lines trace the median convergence trajectories, while the surrounding shaded envelopes capture the dispersion range observed across the randomized trials, thereby highlighting the robust empirical reliability of our approach.

        As depicted in Figure \ref{Figure-2},    NPDAL-n   exhibits pronounced advantages over the baseline methods in terms of both iteration complexity and overall computational efficiency. Specifically, NPDAL-n demonstrates a significantly faster decay rate in both the relative objective error $e_{\mathrm{obj}}(x_n)$ and constraint violation $e_{\mathrm{con}}(x_n)$, requiring substantially fewer iterations to reach high-precision solutions compared to PDB and PDAc-L. Furthermore, the CPU time trajectories reveal a crucial computational insight: the overhead introduced by our adaptive linesearch mechanism is practically negligible, as it is vastly outweighed by the accelerated convergence it provides. Finally, the narrower shaded envelopes associated with NPDAL-n indicate a tighter variance across independent random trials, thereby corroborating the enhanced robustness and stability of the proposed double convex combination strategy when dealing with complex, non-linear coupling structures.
        \begin{figure}[H]
            \centering           
                \begin{minipage}{0.45\textwidth}
                    \centering
                    \includegraphics[width=\textwidth]{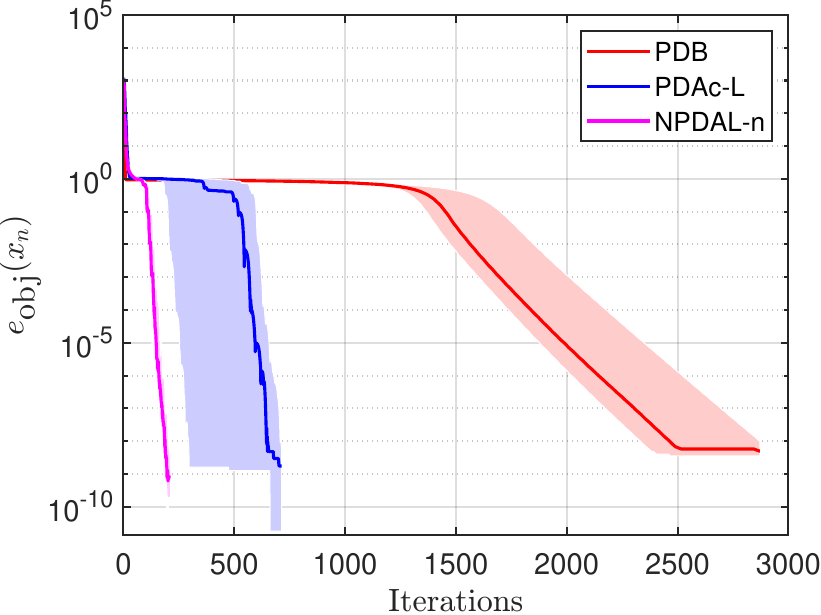}
                \end{minipage}
            \hspace{0.05\textwidth}
                \begin{minipage}{0.45\textwidth}
                \centering
                \includegraphics[width=\textwidth]{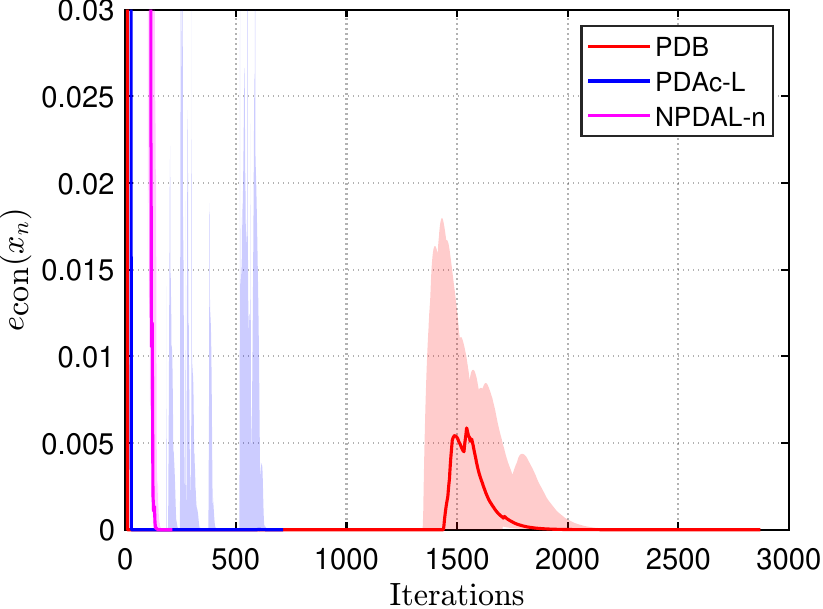}         
                \end{minipage}        
            \vspace{0.02\textheight} 
                \begin{minipage}{0.45\textwidth}
                \centering
                \includegraphics[width=\textwidth]{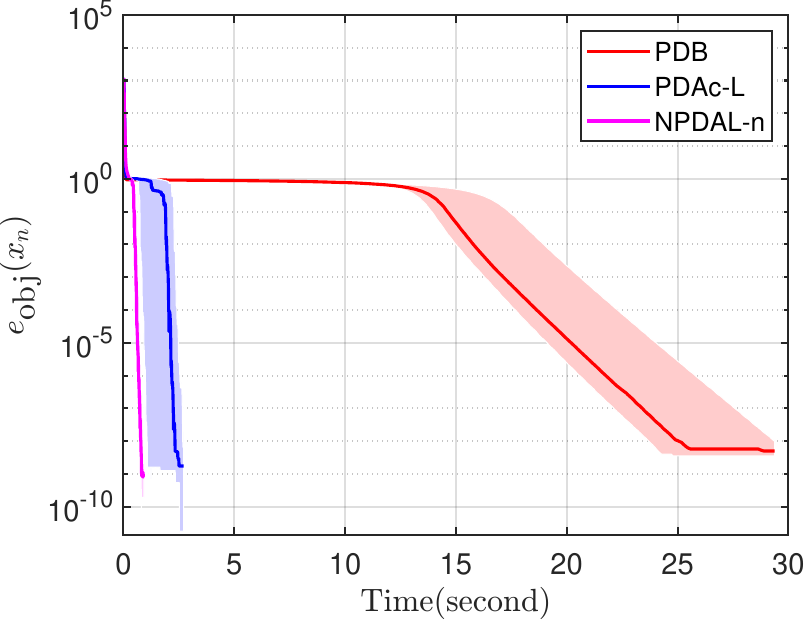}         
                \end{minipage}
            \hspace{0.05\textwidth}
                \begin{minipage}{0.45\textwidth}
                \centering
                \includegraphics[width=\textwidth]{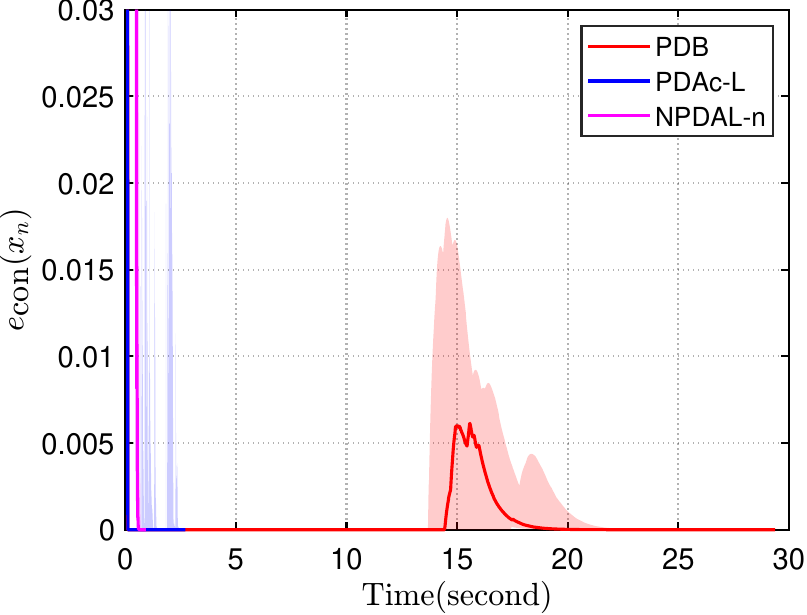}    
                \end{minipage}
            \caption{Convergence behavior of the PDB, PDAc-L, and  NPDAL-n   on randomly generated QCQP instances with $n=500$ and $m=10$. The evolution of the relative objective error $e_{\mathrm{obj}}(x_n)$ and constraint violation $e_{\mathrm{con}}(x_n)$ are plotted against the iteration count and CPU time (in seconds). Solid lines indicate the median trajectories across 10 independent random trials, while the shaded envelopes represent the range of variation.}\label{Figure-2}
        \end{figure}
    
        To assess the performance of the accelerated algorithm aNPDAL-n (Algorithm \ref{Alg2}) in solving problem (\ref{QCQP}) with a strongly convex $h$, we conducted experiments using data generated in the same way as for the convex case. The way to generate the dataset is the same as that in \cite{ref25}: For $j = 0$, we initialize the diagonal elements of $S_{0}$ randomly from $[1, 101]$ and decompose it as $S_{0} = \tilde{S}_{0} + I$, where $\tilde{S}_{0} \succeq 0$. This yields $A_{0} = \Lambda_{0}^{\top}(\tilde{S}_{0} + I)\Lambda_{0} = \tilde{A}_{0} + I$, which allows us to rewrite $h(x)$ as $\hbar(x) + \frac{1}{2}\|x\|^{2}$, with $\hbar(x) := \frac{1}{2}x^{\top}\tilde{A}_{0}x + b_{0}^{\top}x$.
        Consequently, we can apply aPDAc-L to solve the min-max problem:
        $$ 
        \min_{x}\max_{y}\; g(x) + \Phi(x, y) - \iota_{+}(y), 
        $$
        where $g(x) := \iota_{X}(x) + \frac{1}{2}\|x\|^{2}$ is strongly convex, $\Phi(x, y) := \hbar(x) + \langle H(x), y \rangle$, and $f^{*}(y) := \iota_{+}(y)$.
        
        In this experiment, we benchmark the standard \textcolor{black}{NPDAL-n} against its accelerated counterpart (\textcolor{black}{aNPDAL-n}) and the accelerated baseline aPDAc-L \cite{ref25} on the above strongly convex QCQP instances. In addition, we also present numerical results of PDAc‑L and NPDAL‑n for solving strongly convex QCQP problems to demonstrate the effectiveness of the accelerated variants. All three methods are initialized with an identical step size ratio of $\beta_0 = 1$. The evaluation is conducted over a set of 10 independently generated strongly convex instances with  $n = 500$ and $m = 10$. Figure \ref{aQCQP} reveals the empirical benefits of the acceleration schemes. Notably, \textcolor{black}{aNPDAL-n} maintains a certain computational edge over aPDAc-L in terms of the iterations and  CPU  time. More importantly, when benchmarked against the unaccelerated \textcolor{black}{NPDAL-n}, \textcolor{black}{aNPDAL-n} achieves significant performance improvements, reaching convergence with drastically fewer iterations and much shorter runtime.

        For supplementary illustration, we list the parameter choices for each algorithm in this numerical experiment:
        \par PDAc-L: $\psi = 2,\ \phi = 1.2,\ \nu = 0.9,\ \mu = 0.7,\ \xi = 2/5,\ M = 5,\ \eta = 0.9$ \cite{ref25}.
        \par NPDAL-n: $\psi = 2,\ \varphi = 1.1,\ \xi = 0.3,\ \xi_1 = 0.9,\ a = 0.3,\ b = 0.5,\ \mu = 0.75,\ \nu = 0.9,\ \ M = 5,\ \eta = 0.9,\ \omega = 2\psi - \xi - (\psi^3 \varphi) / (1 + \psi),\ \omega_1 = (1 - a)\omega - (a / \xi_1).$
        \par aPDB: $\eta = 0.7,\ \tau_0 = 0.003,\ \gamma_0 = 1,\ \mu = sc$ \cite{ref10}.
        \par aPDAc-L: $\psi = 2,\ \phi = 6/5,\ \nu = 0.9,\ \mu = 0.7,\ \xi = 2/5$ \cite{ref25}.
        \par aNPDAL-n: $\psi = 2,\ \varphi = 1.1,\ \xi = 0.2,\ \xi_1 = 0.9,\ a = 0.3,\ b = 0.7,\ \mu = 0.8,\ \omega = 2\psi - \xi - (\psi^3 \varphi) / (1 + \psi),\ \omega_1 = (1 - a)\omega - (a / \xi_1)$.
        
        where the ``sc'' in aPBD is the strong convexity parameter of the function $g = \iota_{X}(x) + \frac{1}{2}\|x\|^{2}$. 
        \begin{figure}[H]
            \centering          
            \begin{minipage}{0.45\textwidth}
                \centering
                    \includegraphics[width=\textwidth]{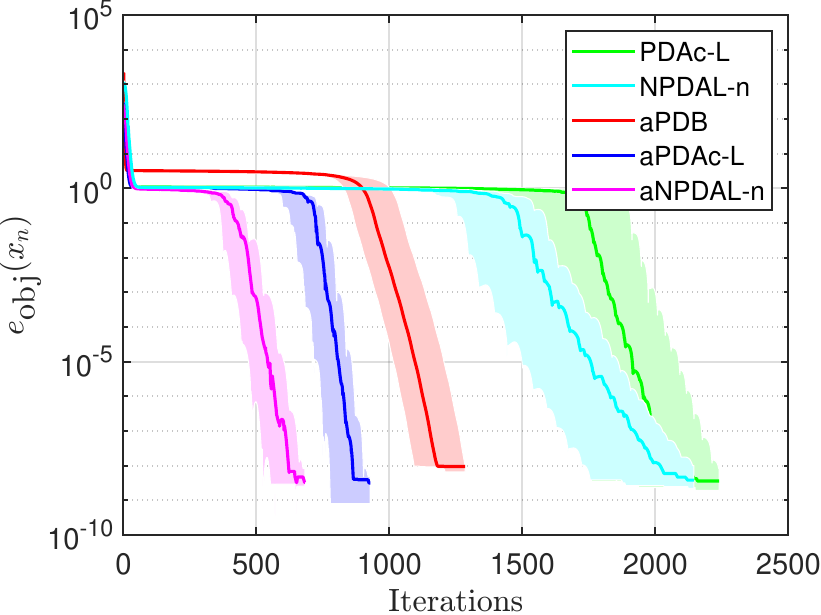}
            \end{minipage}
            \hspace{0.05\textwidth}
                \begin{minipage}{0.45\textwidth}
                \centering
                \includegraphics[width=\textwidth]{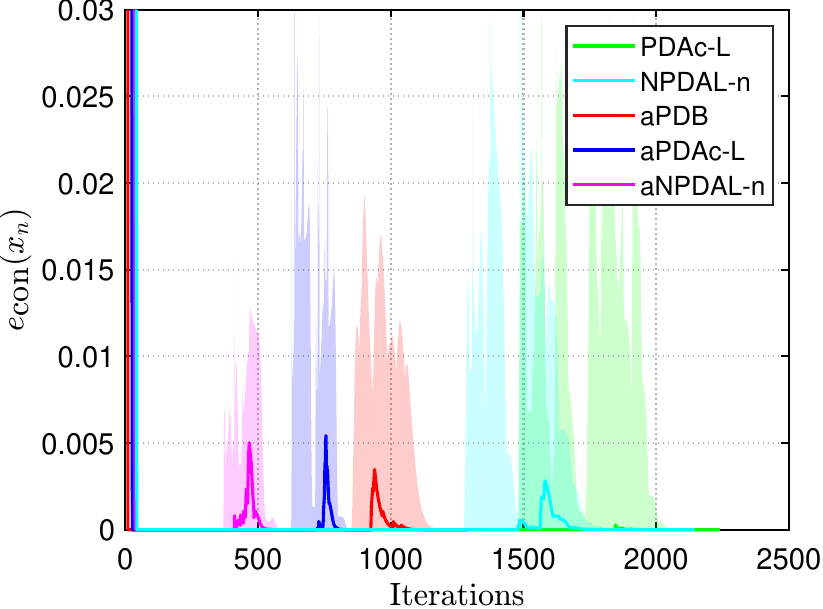}         
                \end{minipage}        
            \vspace{0.01\textheight} 
                \begin{minipage}{0.45\textwidth}
                \centering
                \includegraphics[width=\textwidth]{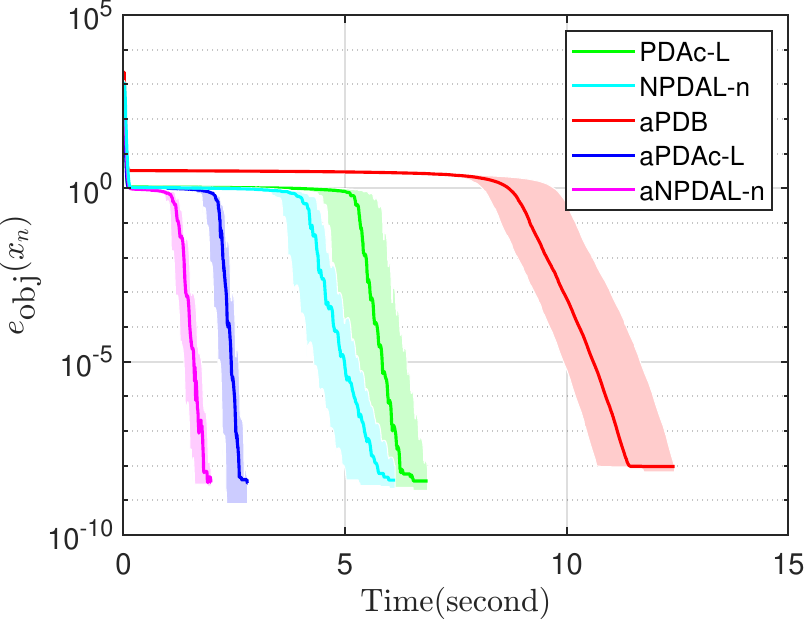}         
                \end{minipage}
            \hspace{0.05\textwidth}
                \begin{minipage}{0.45\textwidth}
                \centering
                \includegraphics[width=\textwidth]{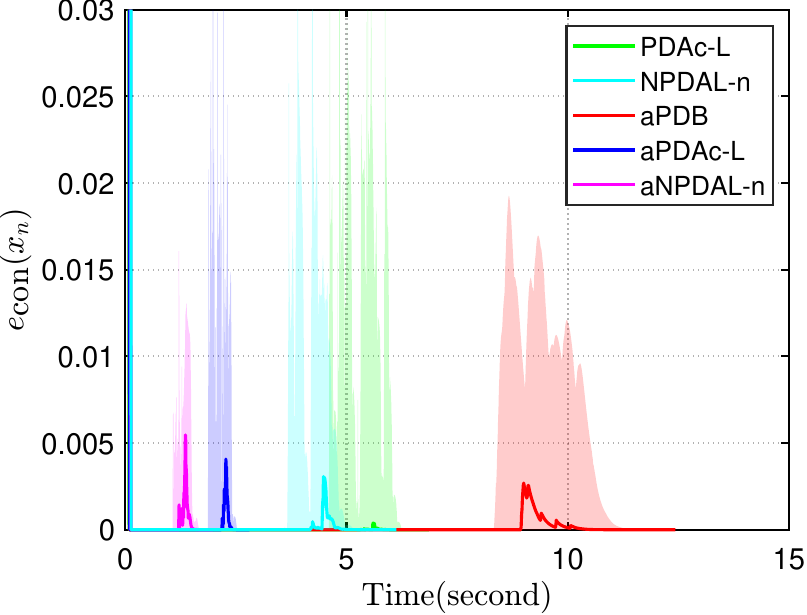}    
                \end{minipage}
            \caption{Convergence comparison of the standard NPDAL-n, the proposed accelerated variant (aNPDAL-n), and the accelerated baseline aPDAc-L on strongly convex QCQP instances ($n=500$, $m=10$). All algorithms are initialized with $\beta_0 = 1$. The trajectories explicitly illustrate the substantial performance gains achieved by the acceleration scheme in reducing both iteration complexity and CPU time (in seconds). Solid lines represent the median results across 10 independent random trials, with shaded envelopes indicating the variation range.}\label{aQCQP}
        \end{figure}
        
    \subsection{Sparse Logistic Regression Problems}
      To   evaluate the performance of NPGM (Algorithm \ref{Alg3}),  we investigate sparse logistic regression (SLR) for binary classification.  Given a training dataset comprising $m$ samples, denoted as $\{(a_{i}, b_{i}) \in \mathbb{R}^{n} \times \{\pm 1\} \mid i=1,\ldots,m\}$, where $a_i$ represents the feature vector and $b_i$ is the binary label, the SLR optimization model is formulated as:
        \begin{equation} \label{SLR}
            F_{\mathrm{opt}} := \min_{x \in \mathbb{R}^{n}} \left\{ F(x) := t\|x\|_{1} + \frac{1}{m}\sum_{i=1}^{m}\log\bigl(1+\exp(-b_{i}a_{i}^{\top}x)\bigr) \right\},
        \end{equation}
        where $t > 0$ serves as the sparsity-inducing regularization parameter. By partitioning the objective into a non-smooth penalty $g(x) := t\|x\|_{1}$ and a smooth empirical loss $h(x) := \frac{1}{m}\sum_{i=1}^{m}\log\bigl(1+\exp(-b_{i}a_{i}^{\top}x)\bigr)$, the SLR model \eqref{SLR} perfectly matches the composite structure of \eqref{finite_sum}. 
        
        Consistent with the experimental design in \cite{ref25}, the regularization parameter is configured as $t = 0.005\|A^{\top}b\|_{\infty}$, where $A^{\top} = [a_{1}, a_{2}, \dots, a_{m}]$ acts as the feature matrix and $b = (b_{1}, b_{2}, \dots, b_{m})^{\top}$ is the label vector. We benchmark the performance of  NPGM against two state-of-the-art competitors: aPGMc \cite{ref25} and aGRAAL \cite{ref14}. The algorithmic parameters for the baselines are rigorously set to their recommended optimal values: $\psi = 3/2$ and $\varphi = 10/9$ for aGRAAL, $\psi = 2$ and $\varphi = 6/5$ for aPGMc, and $(\psi,\ \varphi) = (2.7,\ 4/3)$ and $(\psi,\ \varphi) = (2.55,\ 3/2)$ for NPGM.
        
        The empirical evaluations are conducted on two widely-adopted real-world datasets sourced from the LIBSVM\footnote{Website: \url{https://www.csie.ntu.edu.tw/~cjlin/libsvmtools/datasets/}} repository: \texttt{a9a} (with $m=32,561, n=123$) and \texttt{rcv1} (with $m=20,242, n=47,236$). Adhering to the termination protocol in \cite{ref14}, all test  algorithms run until the stationarity criterion based on the proximal gradient mapping
        $$
        \|x_{n} - \operatorname{Prox}_{\tau_{n}g}\bigl(x_{n} - \tau_{n}\nabla h(x_{n})\bigr)\| \le 10^{-6}.
        $$
         is met.
        The surrogate for the global minimum, $F_{\mathrm{opt}}$, is defined as the lowest objective value $F(x_n)$ attained across all iterations. Figure \ref{SLRfig} compares the decay of the objective residual  $F(x_n) - F_{\mathrm{opt}}$   versus the iteration count and CPU time.

        \begin{figure}[H]
            \centering          
            \begin{minipage}{0.45\textwidth}
                \centering
                \includegraphics[width=\textwidth]{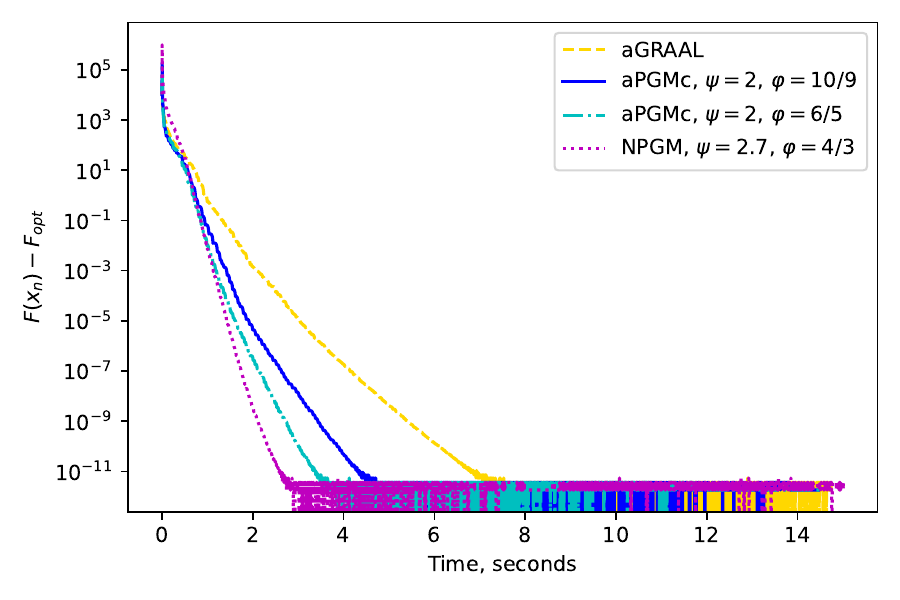}
                \subcaption{\texttt{a9a}}
            \end{minipage}
            \hspace{0.05\textwidth}
            \begin{minipage}{0.45\textwidth}
                \centering
                \includegraphics[width=\textwidth]{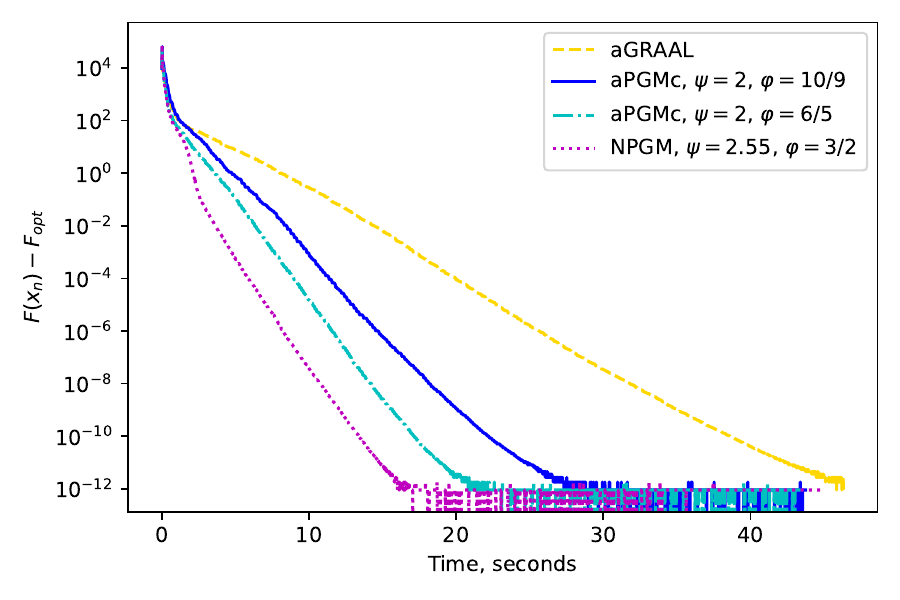} 
                \subcaption{\texttt{rcv1}}        
            \end{minipage}
            \caption{Performance comparison of aPGMc, aGRAAL, and the proposed adaptive method on the sparse logistic regression (SLR) problem. The decay of the objective function gap $F(x_n) - F_{\mathrm{opt}}$ is plotted with respect to CPU time. The evaluation is conducted on two real-world datasets from LIBSVM: \texttt{a9a} (left) and \texttt{rcv1} (right).}\label{SLRfig}
        \end{figure}

\section{Conclusion}\label{section7}

    We develop a new primal-dual algorithm with two convex combinations and linesearch, denoted as NPDAL-n, for convex-concave saddle-point problems. The proposed two convex combinations in NPDAL-n ensure that   the permissible range of the convex combination parameters is mainly determined   by theoretical considerations, with little regard for numerical performance. Theoretically, we have rigorously established the global pointwise convergence of the iterative sequence and derived its sublinear ergodic convergence rate under standard assumptions.
    
    Furthermore, we present an accelerated variant  of NPDAL-n when the primal function is strongly convex and $\Phi(x,y):=h(x)+\langle H(x),y\rangle$. Additionally, by isolating the primal sequence from the primal-dual updates, we demonstrate  that our framework naturally reduces to a linesearch-free proximal gradient method  for a composite convex optimization problem. The numerical experiments on quadratically constrained quadratic programming and sparse logistic regression problems demonstrate that the proposed algorithms consistently outperform state-of-the-art benchmarks—including PDB, PDAc-L, and aGRAAL. Notably, our methods exhibit superior scalability, reduced linesearch overhead, and enhanced robustness, particularly when tackling high-dimensional and tightly constrained instances.\\

    \noindent\textbf{Declaration of competing interest}\\
        The authors declare that they have no known competing financial interests or personal relationships that could have appeared to influence the work reported in this paper.\\
    
    \noindent\textbf{Acknowledgements}\\ 
        This research is supported by the National Science Foundation of China (Nos. 12261019,12571329).\\

    \noindent\textbf{Data availability}\\
        The datasets generated during and analyzed during the current study are available from the corresponding author on reasonable request.\\

    \noindent\textbf{CRediT authorship contribution statement}\\
        \textbf{Shuning Liu:} Writing - original draft, Investigation, Formal analysis, Conceptualization, Visualization. 
        \textbf{Zexian Liu:} Writing - review \& editing, Conceptualization, Software, Methodology, Funding acquisition, Resources, Supervision, Validation. 
        \textbf{Jialong Li:} Writing - review \& editing, Formal analysis, Visualization, Software, Validation.

  
\end{document}